\documentclass[12pt]{amsart}
\usepackage{amscd, fullpage,amssymb}
\usepackage[all]{xy}
\usepackage{xcolor}
\newtheorem{thm}{Theorem}[section]
\newtheorem{prop}[thm]{Proposition}
 
\newtheorem{lem}[thm]{Lemma}
\newtheorem{cor}[thm]{Corollary}

\theoremstyle{remark}

\theoremstyle{definition}
\newtheorem{defn}[thm]{Definition}

\newcommand{\id}{{\rm id}}

\newcommand{\N}{\mathbb{N}}

\newcommand{\F}{\mathbb{ F}}

\newcommand{\Z}{\mathbb{ Z}}

\newcommand{\Tor}{\operatorname{Tor}}

\newcommand{\im}{{\rm im\,}}

\newcommand{\HH}{\operatorname{H}}

\newcommand{\BP}{\operatorname{BP}}

\newcommand{\Zp}{\mathbb{Z}/p}

\newcommand{\coker}{\operatorname{coker}}

\newcommand{\CP}{\mathbb{ C}{\rm P}}
\begin{document}
\title{Bordism of elementary abelian groups via inessential Brown-Peterson homology}

\begin{abstract} We compute the equivariant bordism of free oriented $(\Z/p)^n$-manifolds
as a module over $\Omega_*^{SO}$, when $p$ is an 
odd prime.   We show, among others,  that this module 
 is canonically isomorphic to a direct sum of suspensions of 
multiple tensor products of $\Omega^{SO}_*(B \Z/p)$, and that it is generated by products of standard 
lens spaces. This considerably improves previous calculations by various authors. 

 Our approach relies on the investigation of the  submodule of the Brown-Peterson homology
 of $B (\Z/p)^n$ generated by elements coming from proper subgroups of $(\Z/p)^n$. 

We apply our results to the Gromov-Lawson-Rosenberg conjecture for atoral 
manifolds whose fundamental groups are elementary 
abelian of odd order. 

\end{abstract} 
\author{Bernhard Hanke}
\address{Institut f\"ur Mathematik, Universit\"at Augsburg, D-86135 Augsburg, Germany}
\email{hanke@math.uni-augsburg.de}

\date{\today;  \copyright \, Bernhard Hanke 2016}
\keywords{Equivariant bordism, elementary abelian group, Brown-Peterson theory, Gromov-Lawson-Rosenberg 
conjecture}
\subjclass[2010]{Primary 57R85; Secondary 57S17, 53C20}

\maketitle
 
\section{Overview} \label{intro} 

Conner and Floyd, in their seminal work \cite{CF},
 introduced and studied bordism groups of free oriented $G$-manifolds for 
finite groups $G$. This is equivalent to the oriented bordism of $BG$, the classifying space of $G$. 
Of fundamental interest are the elementary abelian groups $G = (\Z/p)^n$, where $p$ is a prime. 

Recall that $\Omega^{SO}_*(B\Z/p)$, the oriented bordism of free $\Z/p$-manifolds, 
is generated as a module over $\Omega^{SO}_*= \Omega^{SO}_*(pt.)$ by elements
$z_{m} \in \Omega^{SO}_{2m+1}(B\Z/p)$, $ m \geq 0$,  represented by classifying maps 
$L^{2m+1} \to B \Z/p$ of standard lens spaces $L^{2m+1} = S^{2m+1} / ( \Z/p) $. These  correspond to 
spheres $S^{2m+1}$ equipped with standard free $\Z/p$-actions of weight $(1, \ldots, 1)$. 

For odd $p$ the oriented bordism of (classifying spaces of) elementary abelian $p$-groups  fits 
into Landweber's exact K\"unneth sequence \cite[Theorem A]{Landweber}

\begin{align} \label{Land} 
0 \to \Omega^{SO}_*(B( \Z/p)^{n-1}) \otimes_{\Omega^{SO}_*} \Omega^{SO}_*(B\Z/p) \to 
        \Omega^{SO}_* (B( \Z/p)^n) \to \\ \nonumber \to \left( \Tor_{\Omega^{SO}_*}(\Omega^{SO}_*(B(\Z/p)^{n-1}), \Omega^{SO}_*(B\Z/p))
        \right)_{*-1} \to 0.
\end{align} 

The calculation of the middle term by induction on $n$
requires a splitting of this sequence as $\Omega^{SO}_*$-modules. Indeed, individual elements of the torsion product 
can be lifted to $\Omega^{SO}_* (B( \Z/p)^n)$ by a matrix Toda bracket construction,  see \cite[p. 195]{BR1} and \cite{Al}, but this involves choices (of zero bordisms) and hence does not give
 an $\Omega_*^{SO}$-linear 
splitting. Landweber observed that  for $n =2$  the parity of degrees of  elements in  $\Omega^{SO}_* (B( \Z/p)^2)$
 induces a canonial splitting, see \cite[Theorem 7.1]{Landweber}. 
One referee pointed out that more generally 
Holzsager's stable splitting of $B\Z/p$ in \cite{Holz} can be used to split Landweber's K\"unneth sequence
 for $n \leq 2(p-1)$
by considering degrees modulo $2(p-1)$ of elements in  $\Omega^{SO}_* (B(\Z/p)^n)$,  rather than modulo $2$. 
This leads to short proofs of Theorems \ref{splitting} and \ref{main} below. 

Despite a number of related contributions 
  \cite{CF, JW,JW2,  Landweber, Mit,RW, WilBP} 
it remained unclear 
how to construct a splitting of Landweber's exact sequence for all $n$. The following  result 
settles this problem. 

\begin{thm} \label{Landweber_split}  Landweber's exact K\"unneth sequence (\ref{Land})  splits $\Omega^{SO}_*$-linearly. There is a preferred  splitting, not depending  on any choices. 
\end{thm}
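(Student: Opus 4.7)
The plan is to work at the level of Brown-Peterson homology and then transfer the result back to $\Omega^{SO}_*$. Since $p$ is odd, the standard $p$-local splitting of $MSO$ exhibits $\Omega^{SO}_*\otimes \Z_{(p)}$ as a direct sum of suspensions of $\BP_*$, in particular realising $\BP_*$ as a canonical retract. Under this splitting, Landweber's K\"unneth sequence for $\Omega^{SO}_*$ decomposes as a direct sum of shifted copies of the analogous exact sequence
\begin{equation*}
0 \to \BP_*(B(\Z/p)^{n-1}) \otimes_{\BP_*} \BP_*(B\Z/p) \to \BP_*(B(\Z/p)^n) \to \Tor_{\BP_*}\!\left(\BP_*(B(\Z/p)^{n-1}),\BP_*(B\Z/p)\right)_{*-1} \to 0.
\end{equation*}
It therefore suffices to produce a canonical $\BP_*$-linear splitting of this BP sequence; the $\Omega^{SO}_*$-linear splitting and its naturality then follow by reassembly.

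Following the paper's title, the next step is to introduce the \emph{inessential} submodule $I\BP_*(B(\Z/p)^n)\subseteq \BP_*(B(\Z/p)^n)$, generated as a $\BP_*$-module by the images of the maps $\BP_*(BH)\to \BP_*(B(\Z/p)^n)$ induced by the inclusions of all proper subgroups $H\subsetneq (\Z/p)^n$. First I would show that $I\BP_*(B(\Z/p)^n)$ coincides with the image of the K\"unneth homomorphism: any product of a $(\Z/p)^{n-1}$-manifold with a lens space is classified through some maximal proper subgroup $(\Z/p)^{n-1}\times\Z/p$, and varying this decomposition over all such embeddings canonically cuts out exactly the inessential submodule. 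The decisive step is then to construct a canonical $\BP_*$-linear retraction $\BP_*(B(\Z/p)^n)\twoheadrightarrow I\BP_*(B(\Z/p)^n)$; its kernel is a canonical complement, tautologically identified with the shifted $\Tor$ term.

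The main obstacle is this retraction. I would attempt two approaches in parallel. One is a M\"obius-style alternating sum over the lattice of subgroups of $(\Z/p)^n$ combining the transfer and induction maps, designed so that the overcounts produced by intersections of proper subgroups cancel to give a $\BP_*$-linear idempotent. The other is cohomological: pair classes with the top BP Euler class $e_n\in \BP^{*}(B(\Z/p)^n)$, which vanishes on every class induced from a proper subgroup and should thereby distinguish the essential part. Either route demands fine control over the $\BP_*$-module structure of $\BP_*(B(\Z/p)^n)$ and the behaviour of the Ravenel--Wilson / Johnson--Wilson generators under restriction to proper subgroups -- precisely the analysis of inessential $\BP$-homology announced in the title. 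As a byproduct the essential quotient should turn out, as claimed in the abstract, to be freely generated over $\Omega^{SO}_*(B\Z/p)^{\otimes n}$ by classes of products of standard lens spaces, which automatically upgrades the $\BP$-level splitting to the preferred, choice-free splitting at the level of $\Omega^{SO}_*$.
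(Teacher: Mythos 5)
Your high-level setup is right: pass to $\BP$-homology via the $p$-local splitting of $\mathrm{MSO}$, introduce the inessential submodule $K_*$ generated by images from proper subgroups, and try to exhibit it as a canonical complement. But you have the roles of the two pieces reversed, and the reversal rests on a concrete error.

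You assert that the inessential submodule \emph{coincides} with the image of the K\"unneth map, on the grounds that ``any product of a $(\Z/p)^{n-1}$-manifold with a lens space is classified through some maximal proper subgroup $(\Z/p)^{n-1}\times\Z/p$.'' But $(\Z/p)^{n-1}\times\Z/p = (\Z/p)^n$ is not a proper subgroup; a generic $(\Z/p)^{n-1}$-manifold crossed with a lens space sees the whole group. In fact the paper proves the opposite relationship: the inessential submodule $K_*$ and the image of the \emph{iterated} K\"unneth map $\Phi_n : N_*^n \to \widetilde{\BP}_*(\wedge^n \Z/p)$ (the $n$-fold external product of lens-space classes, not the two-factor K\"unneth map of Landweber's sequence) intersect trivially and together span everything, so $\widetilde{\BP}_*(\wedge^n \Z/p) = K_* \oplus \im\Phi_n$. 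It is $K_*$ that surjects onto the $\Tor$ term, and $\im\Phi_n$ sits inside the K\"unneth-image term of Landweber's sequence; the splitting of Landweber's sequence is then extracted from this decomposition together with the inductive structure theorem for $n-1$, not by a tautology about kernels of a retraction onto the K\"unneth image.

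Consequently the ``decisive step'' you identify --- a canonical retraction onto the inessential submodule, produced by M\"obius inversion over the subgroup lattice or by pairing with a top $\BP$ Euler class --- is not the route taken, and would not directly yield what you want even if it existed, since $K_*$ is not the K\"unneth image. The paper's actual engine is more explicit: surjectivity of $K_*$ onto the relevant target (Proposition~\ref{surjalmost}) is established by computing, first in $\F_p$-homology via a Vandermonde-determinant argument and then lifting to $\BP$ via a filtration argument, the effect of specific homomorphisms $\phi_\Lambda : (\Z/p)^k \to (\Z/p)^n$; and the disjointness $K_* \cap \im\Phi_n = 0$ is reduced (Proposition~\ref{reduce}) to the toral class, where it follows from the Ravenel--Wilson solution of the Conner--Floyd conjecture combined with the elementary observation (Proposition~\ref{stretch}) that $n$ one-dimensional mod-$p$ classes on $(\Z/p)^k$ with $k<n$ have vanishing cup product when $p$ is odd. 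You would need something playing the role of these two inputs before the announced structure of $\BP_*(B(\Z/p)^n)$, and with it Theorem~\ref{Landweber_split}, can be concluded.
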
 

For a full description of the $\Omega^{SO}_*$-module $\Omega^{SO}_*(B(\Z/p)^n)$ see Theorem \ref{main} and the following remarks. 
Theorem \ref{Landweber_split} will be proven by a  combination of algebraic and homotopy theoretic arguments, which do 
not give a geometric interpretation of the claimed  splitting, a priori. 
However, our proof of Theorem \ref{Landweber_split} still enables us  to identify a set of generators of $\Omega^{SO}_* (B(\Z/p)^n)$. For $n = 2$ 
Botvinnik-Gilkey \cite{BG} proved that the bordism classes that are represented by classifying maps 
  $L^{2m_1+1} \times L^{2m_2+1} \to B(\Z/p)^2$ 
or   by compositions $L^{2m+1} \to B\Z/p \stackrel{B\phi}{\longrightarrow} B (\Z/p)^2$ for the 
various 
group homomorphisms $\phi:\Z/p \to (\Z/p)^2$ span $\Omega^{SO}_* (B(\Z/p)^2)$
as an $\Omega^{SO}_*$-module. We will show that  this scheme generalizes to elementary abelian $p$-groups of arbitrary rank. 

Let $1 \leq k \leq n$ and let 
$\phi: (\Z/p)^k\to (\Z/p)^n$ be a group homomorphism, inducing a map of $\Omega^{SO}_*$-modules 
\[
     \phi_* : \Omega^{SO}_*(B(\Z/p)^k) \to \Omega^{SO}_*(B(\Z/p)^n) . 
\]
 If we represent an 
element $z \in \Omega^{SO}_{d} (B(\Z/p)^k)$ by a free oriented
 $(\Z/p)^k$-manifold 
 $M^d$, then $\phi_*(z)$ is represented by the free oriented $(\Z/p)^n$-manifold $(\Z/p)^n \times_{(\Z/p)^k} M$, where $(\Z/p)^k$ acts on $(\Z/p)^n$ by the map $\phi$. In the special case when 
$M$ can be taken as a product of $k$ odd dimensional spheres 
 with a product of standard free $\Z/p$-actions, we call  the quotient 
$\left( (\Z/p)^n \times_{(\Z/p)^k} M\right) / ( \Z/p)^n$ 
a  {\em generalized product of lens spaces}. 

\begin{thm}  \label{generated} Let $p$ be an odd prime. Then the $\Omega^{SO}_*$-module $\Omega_*(B ( \Z/p)^n)$ is generated by generalized products of lens spaces (including the empty product, represented by a point). 
 \end{thm}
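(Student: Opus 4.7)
The plan is to induct on $n$. The base case $n=1$ is the classical fact, due to Conner--Floyd, that $\Omega^{SO}_*(B\Z/p)$ is generated as an $\Omega^{SO}_*$-module by the lens space classes $z_m$; the empty product $n=0$ accounts for the point.

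For the inductive step, let $\mathcal{L}_n \subseteq \Omega^{SO}_*(B(\Z/p)^n)$ denote the $\Omega^{SO}_*$-submodule generated by generalized products of lens spaces. I would invoke the preferred splitting of \thmref{Landweber_split},
\[
\Omega^{SO}_*(B(\Z/p)^n) \cong \bigl(\Omega^{SO}_*(B(\Z/p)^{n-1}) \otimes_{\Omega^{SO}_*} \Omega^{SO}_*(B\Z/p)\bigr) \oplus \Sigma\, \Tor_{\Omega^{SO}_*}\bigl(\Omega^{SO}_*(B(\Z/p)^{n-1}), \Omega^{SO}_*(B\Z/p)\bigr),
\]
and show that each summand lies in $\mathcal{L}_n$. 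The tensor summand poses no difficulty: the inclusion of the tensor term in Landweber's sequence is the K\"unneth external product combined with the identification $(\Z/p)^{n-1} \times \Z/p = (\Z/p)^n$. By the inductive hypothesis, any $x \in \Omega^{SO}_*(B(\Z/p)^{n-1})$ decomposes as a sum of classes of the form $(B\phi')_*(z_{m_1}\cdots z_{m_k})$ for group homomorphisms $\phi' : (\Z/p)^k \to (\Z/p)^{n-1}$, and then $x \otimes z_m$ maps to $(B(\phi',\id))_*(z_{m_1}\cdots z_{m_k}\cdot z_m)$, visibly a generalized product of lens spaces for $(\Z/p)^n$.

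The main obstacle is the Tor summand. Its classes are not products, and the naive lifts to $\Omega^{SO}_*(B(\Z/p)^n)$ via matrix Toda brackets depend on choices of zero-bordisms and do not manifestly produce lens space products. My strategy would be to exploit the geometric origin of the preferred splitting from \thmref{Landweber_split}, which proceeds via Brown--Peterson homology as the title of the paper indicates. Concretely, I would try to identify the Tor summand with the image of pushforwards $(B\phi)_* : \Omega^{SO}_*(B(\Z/p)^k) \to \Omega^{SO}_*(B(\Z/p)^n)$ along homomorphisms $\phi$ that do not respect the chosen decomposition $(\Z/p)^n = (\Z/p)^{n-1} \times \Z/p$ as a product $(\phi', \id)$ --- for instance, $\phi$'s factoring through proper subgroups via non-split inclusions, or twisted diagonals. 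By the inductive hypothesis applied to $(\Z/p)^k$ for $k < n$, every such pushforward carries generalized products of lens spaces to generalized products of lens spaces, placing them in $\mathcal{L}_n$.

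Bridging this last step --- matching the purely algebraically defined Tor summand to an explicit span of geometrically defined subgroup pushforwards --- is where I expect the BP-theoretic machinery advertised in the abstract (the investigation of the submodule of $\BP_*(B(\Z/p)^n)$ generated by elements coming from proper subgroups) to be essential. Transferring the information from BP to $\Omega^{SO}_*$ and verifying that the resulting "inessential" classes exhaust the Tor summand is the technical heart of the argument.
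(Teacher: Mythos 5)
Your strategy — induct on $n$, split via the preferred Landweber splitting, handle the tensor summand inductively, and cover the Tor summand by pushforwards from proper subgroups — does match the overall shape of the paper's argument (in the paper, Theorem \ref{generated} is extracted from Theorem \ref{level}, whose proof shows that the "inessential" submodule $K_*$ generated by such pushforwards, together with the iterated K\"unneth image, spans $\widetilde{\BP}_*(\wedge^n \Z/p)$). However, as you yourself flag, the proposal stops precisely at the technical heart: you do not actually prove that the complement of the tensor summand is covered by subgroup pushforwards, and this is not a routine step. It requires showing that, for each pattern of $\CP^\infty$-positions, the maps $\Theta_\Lambda$ induced by the twisted homomorphisms $\phi_\Lambda : (\Z/p)^k \to (\Z/p)^n$ jointly surject onto the relevant summands $J_1 \otimes \cdots \otimes J_n$. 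The paper establishes this via an explicit Vandermonde-determinant computation in $\F_p$-cohomology (Proposition \ref{homcomp}), lifted from $\HH$ to $\BP$ by the filtration argument of Proposition \ref{surj}, then organized by "level" using the non-squeezing Lemma \ref{squeeze} and the vanishing criterion of Proposition \ref{vanish}. None of this is guessable from the abstract, and without it your step 3 is an assertion, not a proof.

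A second, smaller point: your plan implicitly treats Theorem \ref{Landweber_split} and its preferred splitting as available input for the inductive step at rank $n$. In the paper these are established for rank $n$ alongside the spanning result (Proposition \ref{zerl} packages the induction), so the bookkeeping needs more care than simply quoting \ref{Landweber_split}. Moreover, the paper's complement of the K\"unneth image is the inessential submodule $K_*$, not literally "the Tor summand"; to get spanning you need $K_*$ to map onto the quotient by the tensor piece, and to get the direct-sum decomposition you additionally need $K_* \cap \im\Phi_n = 0$, which rests on the Ravenel--Wilson resolution of the Conner--Floyd conjecture (Corollary \ref{nutz}) and the null-homotopy observation of Proposition \ref{stretch}. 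Your proposal does not touch this intersection statement either, though for the generation claim alone it is the surjectivity half that is essential.
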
 
 
Note that the corresponding assertion for singular homology $\HH_*(B(\Z/p)^n ; \Z)$ is true only for $n =1$. 

The work on this paper started with an attempt to understand the proof of \cite[Theorem 5.6]{BR1}, which is a weak 
version of our Theorem \ref{generated} and represents the crucial step in the proof  
 of the Gromov-Lawson-Rosenberg conjecture about the 
existence of Riemannian metrics of 
positive scalar curvature on closed manifolds with elementary 
abelian fundamental groups given in \cite{BR1} and \cite{BR2}. It turned out that the proof 
of \cite[Theorem 5.6]{BR1} in {\em loc.~cit.~}is incorrect, but the theorem itself is true 
and can be proven with our methods. Theorem  \ref{generated} 
and its analogue for spin bordism lead to a  proof of  the Gromov-Lawson-Rosenberg conjecture 
for elementary abelian groups of odd order, see Section \ref{pscm}, where we also make some remarks on the argument in \cite{BR1}. 

{\em Acknowledgements.} This paper 
was written while visiting IMPA, Rio de Janeiro, whose hospitality is gratefully acknowledged.
We are grateful to Peter Landweber and to the referees for carefully reading  the first version of 
this manuscript and 
providing a number of valuable remarks. 
The research leading to this paper was supported by DFG grant HA 3160/6-1. 


\section{Brown-Peterson homology of elementary abelian groups} \label{calculation}

In this section we summarize our computations in some detail
and motivate our approach. For simplicity and in accordance with an analogous 
 convention in group homology we often suppress the letter $B$ in classifying spaces in our 
 computations of generalized homology groups of classifying spaces.

The classifying space $B(\Z/p)^n$ being $p$-local  we can restrict ourselves to a computation of  
$\Omega^{SO}_*((\Z/p)^n) = \Omega^{SO}_*(B(\Z/p)^n) $ localized at $p$.  We recall  \cite{WilBP} that for odd primes $p$, the $p$-local 
oriented bordism spectrum ${\rm MSO}_{(p)}$ splits into suspensions of copies of $\BP$, Brown-Peterson theory for the prime $p$, with coefficients
\[
    \BP_* = \Z_{(p)}[v_1, v_2, \ldots ], \; \deg(v_m) = 2p^m - 2. 
\]
Hence, for odd primes $p$, the computation of $\Omega_*^{SO}((\Z/p)^n)$ reduces to a computation 
of $\BP_*((\Z/p)^n)$. This is the theme of the paper at hand.

Recall that $B\Z/p$ has a CW structure with one cell in each dimension greater than or equal to 
zero \cite[Example (1.1.2)]{AP}. The associated reduced cellular chain complex $C_*$ has 
 one generator $c_d \in C_d$ for each
$d\geq 1$ and is equipped with the differential 
\[
     c_{2m} \mapsto p \cdot c_{2m-1} \, , \, \, c_{2m+1} \mapsto 0. 
\]
In particular, $\widetilde{\HH}_*(\Z/p)$, the reduced integral homology of $B \Z/p$, is an $\F_p$-module 
with generators $[c_{2m+1}] \in \widetilde{\HH}_{2m+1}(\Z/p)$, $m \geq 0$. 

The reduced homology $\widetilde{\HH}_*(\wedge^n \Z/p)$ of the $n$-fold smash product of $B\Z/p$ 
is  equal to the homology of the $n$-fold tensor product of $C_*$,
\[
    \widetilde{\HH}_*(\wedge^n \Z/p) = \HH_*(C_* \otimes_{\Z} \cdots \otimes_{\Z} C_*). 
\]
It follows from this description that the iterated  K\"unneth map 
\[
    \widetilde{\HH}_*(\Z/p) \otimes  \cdots \otimes  \widetilde{\HH}_*(\Z/p) \to \widetilde{\HH}_*(\wedge^n \Z/p)
\]
is injective and has a canonical splitting 
\[
  \Psi_n :   \widetilde{\HH}_*(\wedge^n \Z/p) \to \widetilde{\HH}_*(\Z/p) \otimes  \cdots \otimes
   \widetilde{\HH}_*(\Z/p)
\]
induced by the chain map
\[
   C_* \otimes \cdots \otimes C_* \to  \widetilde{\HH}_*(\Z/p) \otimes  \cdots \otimes
   \widetilde{\HH}_*(\Z/p)
\]
that sends each tensor product $c_{2m_1+1} \otimes \cdots \otimes c_{2m_n+1}$ to 
$[c_{2m_1+1}] \otimes \cdots \otimes [c_{2m_n+1}]$, and each tensor product involving an even dimensional 
generator $c_{2m}$ to zero.   

Theorem \ref{summary} of our paper  says  that  the reduced Brown-Peterson homology 
 $\widetilde{\BP}_*(\wedge^n \Z/p)$ has a similar chain model description, if $p$ is odd.  
 The next theorem   is our first  result in this direction. 
 
  From now 
on we assume that $p$ is an odd prime, if not stated otherwise.

\begin{thm} \label{splitting} There is a $\BP_*$-linear map 
\[
   \Psi_n : \widetilde{\BP}_*(\wedge^n \Z/p) \to \widetilde{\BP}_*(\Z/p) \otimes_{\BP_*} 
   \cdots \otimes_{\BP_*} \widetilde{\BP}_*( \Z/p) 
\]
which splits  the iterated K\"unneth map 
\[
    \Phi_n :   \widetilde{\BP}_*(\Z/p) \otimes_{\BP_*}
   \cdots \otimes_{\BP_*} \widetilde{\BP}_*(\Z/p)     \to \widetilde{\BP}_*(\wedge^n \Z/p) .
\]
The map $\Psi_n$ is a map of $\BP_* \BP$-comodules. 
\end{thm}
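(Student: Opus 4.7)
The plan is to build $\Psi_n$ by iterated use of the two-variable K\"unneth short exact sequence, combined with Holzsager's stable splitting of $B\Z/p$ at $p$, which reduces the problem to splitting Landweber K\"unneth sequences on summands whose BP-homology has strictly controlled degree behaviour.

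Recall that the action of $(\Z/p)^\times$ on $B\Z/p$, together with the orthogonal idempotents in $\Z_{(p)}[(\Z/p)^\times]$ coming from the $p-1$ characters of this group of order invertible in $\Z_{(p)}$, produces Holzsager's stable splitting $\Sigma^\infty B\Z/p_{(p)} \simeq \bigvee_{i=1}^{p-1} X_i$. Since $\BP_*$ lives in degrees divisible by $2(p-1)$, the summand $\widetilde{\BP}_*(X_i)$ is concentrated in degrees $\equiv 2i-1 \pmod{2(p-1)}$. Smashing these splittings produces a wedge decomposition of $\wedge^n B\Z/p$ and a parallel decomposition of $\widetilde{\BP}_*(\Z/p)^{\otimes n}$, and $\Phi_n$ respects both decompositions. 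It therefore suffices to construct $\Psi_n$ on each summand indexed by a tuple $(i_1,\ldots,i_n) \in \{1,\ldots,p-1\}^n$.

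Inductively, given $\Psi_{n-1}$ I set $\Psi_n = (\Psi_{n-1} \otimes \id) \circ \sigma_n$, where $\sigma_n$ splits Landweber's two-variable sequence
\begin{equation*}
0 \to \widetilde{\BP}_*(\wedge^{n-1}\Z/p) \otimes_{\BP_*} \widetilde{\BP}_*(\Z/p) \to \widetilde{\BP}_*(\wedge^n \Z/p) \to \Tor^{\BP_*}(\widetilde{\BP}_*(\wedge^{n-1}\Z/p), \widetilde{\BP}_*(\Z/p))_{*-1} \to 0.
\end{equation*}
On each Holzsager piece, the rightmost factor $\widetilde{\BP}_*(X_{i_n})$ lies in a single residue mod $2(p-1)$, while an inductive analysis shows that $\widetilde{\BP}_*(X_{i_1}\wedge\cdots\wedge X_{i_{n-1}})$ is concentrated in a range of consecutive residues with the kernel of $\sigma_n$ in the lowest residue and the cokernel shifted up by one. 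When $n \leq 2(p-1)$ the accumulated residues fit strictly inside a single period of $2(p-1)$, so $\sigma_n$ is given by projecting onto the minimal residue class; this yields the short proof alluded to in the introduction.

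The main obstacle arises for $n > 2(p-1)$, when the consecutive residues wrap around a full period and projection alone cannot separate kernel from cokernel. Here my strategy is to construct $\sigma_n$ using the $\BP_*\BP$-comodule structure directly: each $\widetilde{\BP}_*(X_i)$ is a cyclic $\BP_*\BP$-comodule generated in its minimal residue, so filtering $\widetilde{\BP}_*(\wedge_k X_{i_k})$ by iterated Tor depth produces an associated graded controlled by iterated Tor groups, and a canonical splitting should be obtained by peeling these Tor layers off one at a time, using the comodule structure at each stage to fix the choice. The $\BP_*\BP$-comodule property of the resulting $\Psi_n$ is then automatic, since Holzsager's splitting, the K\"unneth sequence, the residue projections, and the comodule filtration are all natural under stable maps and hence intertwine the $\BP_*\BP$-coaction.
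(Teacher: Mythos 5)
Your proposal correctly reproduces the short route via Holzsager's stable splitting that works for $n \leq 2(p-1)$ — this is the referee's observation acknowledged in the introduction of the paper — but the crucial case $n > 2(p-1)$ is left with a genuine gap, and the paper's own proof takes an entirely different path there.

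The gap is in your final paragraph. You propose filtering by "iterated Tor depth" and then "peeling these Tor layers off one at a time, using the comodule structure at each stage to fix the choice." This is precisely the problem the paper's introduction explains cannot be solved this way: lifting individual Tor classes requires matrix Toda bracket constructions that depend on choices of zero bordisms, and these choices do not assemble to a $\BP_*$-linear splitting. The $\BP_*\BP$-coaction alone does not rigidify those choices — a $\BP_*\BP$-comodule extension of $\BP_*$-modules need not split, and nothing in your argument shows that the relevant extensions do. Your claim that each Holzsager piece $\widetilde{\BP}_*(X_i)$ is a \emph{cyclic} $\BP_*\BP$-comodule generated in its minimal residue is also unjustified; as a $\BP_*$-module it needs infinitely many generators $z_m$ with $m$ in a fixed residue class mod $p-1$, and there is no reason a single comodule primitive controls all of them. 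So for $n > 2(p-1)$ you have a plan, not a proof.

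The paper's actual mechanism is different in kind. It introduces the \emph{inessential Brown-Peterson homology} $K_*$, the submodule generated by the images of $\phi_*: \BP_*((\Z/p)^k) \to \BP_*((\Z/p)^n)$ over all homomorphisms from proper subgroups, and proves two things: (i) $K_*$ surjects onto the target of $\Gamma_n$, by explicit Vandermonde-determinant computations (Proposition \ref{homcomp}) transported from $\F_p$-homology to $\BP$-homology via the filtration argument of Propositions \ref{surj} and \ref{core}; and (ii) $K_* \cap \im \Phi_n = 0$, by reduction to the toral class (Proposition \ref{reduce}) and Ravenel--Wilson's resolution of the Conner--Floyd conjecture together with the vanishing $\phi^*(s_1)\cup\cdots\cup\phi^*(s_n) = 0$ for $\phi$ from a smaller elementary abelian group (Proposition \ref{stretch}, which needs $p$ odd). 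The splitting $\Psi_n$ is then the projection to $\im \Phi_n$ with kernel $K_*$, which is canonical and automatically a comodule map. If you want to salvage your route, you would need to supply an actual argument that the extensions of Holzsager pieces by iterated Tor groups split as $\BP_*\BP$-comodules for all $n$, and at present that is exactly the hard content you are assuming rather than proving.
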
 

Our construction of $\Psi_n$ is canonical and independent of any choices. The main idea 
is to study the {\em inessential Brown-Peterson group homology},  
the $\BP_*$-submodule of $\BP_*((\Z/p)^n)$ 
generated by elements coming from proper subgroups of $(\Z/p)^n$, see Definition \ref{inessential}. 
Our paper is based 
on the observation that this submodule is a complement of the image of the iterated K\"unneth map. 

Based on Theorem \ref{splitting} we can state our computation of  $\widetilde{\BP}_*(\wedge^n 
 \Z/p)$. By regarding $S^{\infty}$ both as a free contractible $\Z/p$- and $S^1$-space  we have a 
 canonical map 
  \[
 \pi :  B\Z/p \to \CP^{\infty} 
\]
induced by the standard inclusion $\Z/p \hookrightarrow S^1$. 
From this we get a map 
\[
  \gamma_{(X_i)}: \bigwedge_n B\Z/p \to \bigwedge_n  X_i 
\]
for each family $(X_i)_{1 \leq i \leq n}$, where each $X_i$ is equal to either $B\Z/p$ or $\CP^{\infty}$,
and we apply either $\pi$ or the identity to each smash product factor $B \Z/p$.

Recall that $\widetilde{\BP}_*(\CP^{\infty})$ 
 is a free $\BP$-module generated by elements $\beta_{m} \in \BP_{2m}(\CP^{\infty})$, $m \geq 1$,
 represented by the standard inclusion $ \CP^{m} \hookrightarrow \CP^{\infty}$. 
In particular, for any pointed space $X$, there is a canonical isomorphism 
\[
    \widetilde{\BP}_*(X \wedge \CP^{\infty}) \cong \widetilde{\BP}_*(X) \otimes_{\BP_*} 
   \widetilde{\BP}_*(\CP^{\infty}) .
\]

Hence, if  $k$ is the number of factors $X_i = B\Z/p$ in a smash product as before,
we can compose the induced map in reduced $\BP$-theory (involving a permutation of factors)
\[
     \widetilde{\BP}_*(\bigwedge_n \Z/p) \to \widetilde{\BP}_*(  \bigwedge_n X_i ) 
     \cong \widetilde{\BP}_*(\bigwedge_k \Z/p) \otimes_{\BP_*} \bigotimes_{n-k} \widetilde{\BP}_*(\CP^{\infty}) 
\]
 with the splitting $\Psi_k$ from Theorem \ref{splitting}, to get a $\BP_*$-linear map
\[
   \widetilde{\BP}_*(\wedge^n \Z/p) \to \bigotimes_{i=1}^n \widetilde{\BP}_*(X_i).
 \]
This is a map of $\BP_* \BP$-comodules.

For $k \geq 1$ let $(L_k)_*$ be the free graded $\BP_*$-module with generators $y_{m}$ in degree
$2m$, $0 <  m < p^k$.
We have a canonical $\BP_*$-linear projection

\begin{eqnarray*}
    \widetilde{\BP}_* ( \CP^{\infty}) & \to & (L_k)_* \\
                                   \beta_m & \mapsto & \begin{cases} y_{m}\,  {\rm~for~} 0 < m < p^k, \\ 
                                     0 {\rm~for~} m \geq p^k.  \end{cases}
\end{eqnarray*}
However, $(L_k)_*$ does not carry an induced $\BP_* \BP$-comodule structure.

 \begin{thm} \label{main}  These maps introduced so far induce an isomorphism of $\BP_*$-modules
 \[
   \Gamma_n:     \widetilde{\BP}_*(\wedge^n \Z/p)  \cong \bigoplus J_1 \otimes_{\BP_*}  \cdots \otimes_{\BP_*} J_n .
 \]
The direct sum is over all tensor products with $J_i$ equal to $\widetilde{\BP}_*(\Z/p)$ or to $(L_k)_*$, where $k$ is the number of $J_j$, $j<i$, with $J_j = \widetilde{\BP}_*(\Z/p) $. 
\end{thm}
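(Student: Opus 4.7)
The plan is to establish that $\Gamma_n$ is an isomorphism by induction on $n$, bootstrapping from the canonical splitting provided by Theorem \ref{splitting}. I would first unpack the construction so that each summand $J_1 \otimes \cdots \otimes J_n$ in the target is paired with a subset $S \subseteq \{1,\ldots,n\}$ recording the positions where $J_i = \widetilde{\BP}_*(\Z/p)$; the corresponding component $G_S$ of $\Gamma_n$ is then the composition of $(\gamma_{(X_i)})_*$ (with $X_i = B\Z/p$ exactly for $i \in S$), the Künneth isomorphism separating out the free factors $\widetilde{\BP}_*(\CP^\infty)$, the splitting $\Psi_{|S|}$ applied to the surviving $B\Z/p$ positions, and the truncations $\widetilde{\BP}_*(\CP^\infty) \twoheadrightarrow (L_{k_i})_*$ at the $\CP^\infty$ positions.

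The base case $n = 1$ is immediate, since $(L_0)_* = 0$ leaves only the summand $\widetilde{\BP}_*(\Z/p)$, on which $\Gamma_1$ is the identity.

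For the inductive step ($n \geq 2$) I would first peel off the full-weight summand $S = \{1,\ldots,n\}$ using Theorem \ref{splitting}: the retraction $\Psi_n$ of $\Phi_n$ gives a canonical $\BP_*$-linear decomposition
\[
\widetilde{\BP}_*(\wedge^n \Z/p) \cong \widetilde{\BP}_*(\Z/p)^{\otimes n} \oplus \ker(\Psi_n),
\]
and the first summand coincides with the $S = \{1,\ldots,n\}$ term of $\Gamma_n$. It then remains to identify $\ker(\Psi_n)$ with the direct sum $\bigoplus_{S \subsetneq \{1,\ldots,n\}} J^S$ of the remaining summands. For this I would employ the cofiber sequences obtained by applying $\pi: B\Z/p \to \CP^\infty$ at a chosen factor, use the Künneth isomorphism $\widetilde{\BP}_*(\wedge^{n-1} \Z/p \wedge \CP^\infty) \cong \widetilde{\BP}_*(\wedge^{n-1} \Z/p) \otimes_{\BP_*} \widetilde{\BP}_*(\CP^\infty)$ (valid because the second factor is $\BP_*$-free), apply the inductive hypothesis to $\widetilde{\BP}_*(\wedge^{n-1} \Z/p)$, and follow by the appropriate truncation $\widetilde{\BP}_*(\CP^\infty) \twoheadrightarrow (L_k)_*$ to pick out each summand in which the chosen index lies outside $S$.

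The main obstacle I anticipate is the bookkeeping of the truncation depths $k_i$ on the nose: because $k_i$ depends cumulatively on the previous $\epsilon_j$, one must verify that the inductive decomposition respects this exact counting rather than merely a coarser invariant such as $|S|$. This forces one to check that the canonical splittings $\Psi_k$ of Theorem \ref{splitting} commute, in the appropriate sense, with the operations $(\id^{n-1} \wedge \pi)_*$, a compatibility I expect to follow from the naturality and choice-freeness of $\Psi_k$ and its $\BP_*\BP$-comodule structure asserted in that theorem. Verifying these commutative diagrams, and in particular that the contributions from the various cofiber sequences assemble coherently into the single map $\Gamma_n$, will be the technical heart of the argument.
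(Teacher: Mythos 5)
Your proposal invokes Theorem~\ref{splitting} for $n$ — the retraction $\Psi_n$ — as an available tool in proving Theorem~\ref{main} for $n$. But in the paper these two theorems are proved \emph{simultaneously} by induction: the map $\Gamma_n$ to the non-$N_*^n$ summands is defined using only the splittings $\Psi_k$ for $k \leq n-1$, and Proposition~\ref{zerl} shows that if one can find a $\BP_*$-submodule $K_* \subset \widetilde{\BP}_*(\wedge^n \Z/p)$ that surjects onto $\im\Gamma_n$ and satisfies $K_* \cap \im\Phi_n = 0$, then \emph{both} Theorem~\ref{splitting} and Theorem~\ref{main} follow for $n$. Starting from $\Psi_n$ therefore begs the central question: the existence of a canonical splitting of $\Phi_n$ is precisely what has to be established, and it is not at all obvious from Landweber's exact sequence alone (individual liftings exist via matrix Toda brackets, but these involve choices and are not $\BP_*$-linear).

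The missing content is the paper's key idea: take $K_*$ to be the \emph{inessential Brown--Peterson homology}, the submodule of $\widetilde{\BP}_*(\wedge^n \Z/p)$ generated by images from proper subgroups $(\Z/p)^k \hookrightarrow (\Z/p)^n$, $k<n$. Proving that $K_*$ does the job (Theorem~\ref{magic}) splits into two halves, neither of which appears in your proposal. Surjectivity of $K_*$ onto $\im\Gamma_n$ is an explicit computation built from a Vandermonde-type argument in $\F_p$-cohomology (Proposition~\ref{homcomp}), lifted to $\BP_*$ via the non-squeezing Lemma~\ref{squeeze} and Proposition~\ref{surj}. The trivial intersection $K_* \cap \im\Phi_n = 0$ is where the deep input comes in: one reduces (Proposition~\ref{reduce}) to the toral element $z_0 \otimes \cdots \otimes z_0$, and then combines the Ravenel--Wilson theorem that the annihilator of the fundamental class $\iota_n \in \BP_n(K(\Z/p,n))$ is exactly $(v_0,\ldots,v_{n-1})$ with the fact (Proposition~\ref{stretch}, valid only for odd $p$ since $s_i \cup s_i = 0$) that $\mu_n \circ B\phi$ is null-homotopic for any homomorphism $\phi$ from a rank-$k$ subgroup with $k<n$. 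Your sketch of tracking the cofiber sequences and truncation depths gives a plausible account of how the components of $\Gamma_n$ are assembled, but it does not yield injectivity or surjectivity of $\Gamma_n$, and it cannot do so without some input comparable to the Conner--Floyd conjecture.
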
 

In \cite[Theorem 5.1]{JW} it was shown (for any prime) 
that $\widetilde{\BP}_*(\wedge^n \Z/p)$ has a $\BP_*$-module filtration whose associated 
graded module is $\BP_*$-isomorphic to the right hand side of  Theorem \ref{main}.
Our result  shows that the filtration can be omitted (for odd primes). Theorem \ref{main} implies a similar description 
of $\widetilde{\Omega}_*^{SO}(\wedge^n \Z/p ) = \widetilde{\BP}_*(\wedge^n \Z/p) \otimes_{\BP_*} (\Omega^{SO}_*)_{(p)}$. 
.

\begin{cor} \label{kun} The Landweber exact K\"unneth sequence \cite{Landweber} 
 \begin{align*}
    0 \to \widetilde{\BP}_*(\wedge^{n-1} \Z/p) \otimes_{\BP_*} \widetilde{\BP}_*(\Z/p) \to 
        \widetilde{\BP}_* (\wedge^n \Z/p) \to \\ \to \big( \Tor_{\BP_*}(\widetilde{\BP}_*(\wedge^{n-1} \Z/p), \widetilde{\BP}_*(\Z/p))\big)_{*-1} \to 0
\end{align*} 
splits $\BP_*$-linearly. 
\end{cor}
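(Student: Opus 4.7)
The plan is to read the splitting off Theorem~\ref{main}. The isomorphism $\Gamma_n \colon \widetilde{\BP}_*(\wedge^n\Z/p) \cong \bigoplus_{S \subseteq \{1,\ldots,n\}} D_n(S)$ partitions according to whether $n \in S$: setting
\[
 E := \bigoplus_{n \in S} D_n(S), \qquad E^c := \bigoplus_{n \notin S} D_n(S),
\]
one has $\widetilde{\BP}_*(\wedge^n\Z/p) \cong E \oplus E^c$ as $\BP_*$-modules. For every $T \subseteq \{1,\ldots,n-1\}$ the summand $D_n(T \cup \{n\})$ is, by inspection of the indexing of Theorem~\ref{main}, literally $D_{n-1}(T) \otimes_{\BP_*} \widetilde{\BP}_*(\Z/p)$ (the first $n-1$ tensor factors are those of $D_{n-1}(T)$ and the last factor is $\widetilde{\BP}_*(\Z/p)$). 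Summing over $T$ and invoking Theorem~\ref{main} for $n-1$ yields a canonical identification $E \cong \widetilde{\BP}_*(\wedge^{n-1}\Z/p) \otimes_{\BP_*} \widetilde{\BP}_*(\Z/p)$.

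I would next show that this identification realises $E$ as the image of the two-fold K\"unneth map $K$. The inclusion $\im(K) \subseteq E$ is the easy direction: for each $S$ with $n \notin S$, the $S$-component of $\Gamma_n$ is induced by $\gamma_{(X_i)}$ with $X_n = \CP^{\infty}$, so it involves the map $\pi_* \colon \widetilde{\BP}_*(\Z/p) \to \widetilde{\BP}_*(\CP^{\infty})$ applied to the last smash factor. Since $\widetilde{\BP}_*(\Z/p)$ is entirely $p$-torsion and $\widetilde{\BP}_*(\CP^{\infty})$ is $\BP_*$-free, $\pi_*$ vanishes on reduced $\BP$-homology, and bilinearity of the external product gives
\[
(\gamma_{(X_i)})_*(x \times y) \;=\; (\gamma_{(X_i)_{i<n}})_*(x) \times \pi_*(y) \;=\; 0.
\]
For the reverse inclusion I would verify that the composition
\[
\widetilde{\BP}_*(\wedge^{n-1}\Z/p) \otimes_{\BP_*} \widetilde{\BP}_*(\Z/p) \xrightarrow{K} \widetilde{\BP}_*(\wedge^n\Z/p) \twoheadrightarrow E
\]
agrees with the identification above; for $S = T \cup \{n\}$ the factor $X_n = B\Z/p$ gives $\gamma_{X_n} = \id$, and by naturality of the external product the $S$-component of $\Gamma_n \circ K$ reduces to the $T$-component of $\Gamma_{n-1}$ tensored with $\id_{\widetilde{\BP}_*(\Z/p)}$. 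This last step rests on the coherence $\Psi_k \circ K_{k-1,1} = \Psi_{k-1} \otimes \id$ between the canonical splittings of Theorem~\ref{splitting} and the two-fold K\"unneth map $K_{k-1,1} \colon \widetilde{\BP}_*(\wedge^{k-1}\Z/p) \otimes \widetilde{\BP}_*(\Z/p) \to \widetilde{\BP}_*(\wedge^k\Z/p)$, which is built into the construction of the $\Psi_k$ from the inessential Brown-Peterson group homology.

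Once $\im(K) = E$ is established, $E^c$ is a $\BP_*$-linear direct complement of $\im(K)$ in $\widetilde{\BP}_*(\wedge^n\Z/p)$, and so Landweber's K\"unneth sequence splits; the quotient map restricted to $E^c$ then gives the canonical isomorphism $E^c \cong \Tor_{\BP_*}(\widetilde{\BP}_*(\wedge^{n-1}\Z/p), \widetilde{\BP}_*(\Z/p))_{*-1}$. The main obstacle is the coherence/naturality step in the second paragraph; the other ingredients are immediate bookkeeping from Theorem~\ref{main} together with the vanishing $\pi_* = 0$.
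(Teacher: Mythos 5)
Your proof is correct and follows exactly the paper's route: the splitting is read off from the direct sum decomposition $\Gamma_n$ of Theorem~\ref{main}, with the $J_n = \widetilde{\BP}_*(\Z/p)$ summands identified with $\widetilde{\BP}_*(\wedge^{n-1}\Z/p)\otimes_{\BP_*}\widetilde{\BP}_*(\Z/p)$ and the complementary $J_n = (L_k)_*$ summands with the torsion term. You spell out the bookkeeping (the vanishing of $\pi_*$ and the coherence $\Psi_k \circ K_{k-1,1} = \Psi_{k-1}\otimes \id$) that the paper treats as immediate from the way $\Gamma_n$ is built in the proof of Proposition~\ref{zerl}, which is a fine thing to make explicit but not a different argument.
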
 

In fact, with respect to Theorem \ref{main},  the tensor product on the left corresponds to the tensor product summands
with $J_n = \widetilde{\BP}_*(\Z/p)$,    and the torsion product on the right corresponds to the tensor product summands  with $J_n$ equal to some $(L_k)_*$. Corollary \ref{kun} immediately implies Theorem \ref{Landweber_split}.

We can conveniently summarize Theorems \ref{splitting} and \ref{main} 
 in terms of a chain model description of $\widetilde{\BP}_*(\wedge^n 
\Z/p)$ similar to the one for ordinary homology. 
Recall that $\widetilde{\BP}_*(\Z/p)$ is generated by elements
$z_{m} \in \widetilde{\BP}_{2m+1}(\Z/p)$, $ m \geq 0$,  represented by classifying maps 
$L^{2m+1} \to B \Z/p$ of standard lens spaces $L^{2m+1} = S^{2m+1} / ( \Z/p) $. These 
generators are subject to the relations
 \[
    \sum_{i=0}^{m}  a_{i} \cdot z_{m-i} = 0 
\]
where $a_{i} \in \BP_{2i}$ appear in the  formal group law for $\BP$-theory, cf. Section \ref{back}. 

Motivated by this calculation let  $C^{BP}_*$ be the free $\BP_*$-chain complex  in  one generator $c_{d}$ in each 
degree $d\geq 1$ and equipped with the $\BP_*$-linear differential $C^{BP}_* \to C^{BP}_{*-1}$ 
equal to 
\[
    c_{2m} \mapsto \sum_{i=0}^{m-1} a_{i} \cdot c_{2(m-i)-1} \, , \, \, \, c_{2m+1} \mapsto 0  .
\]
It is then clear that $\HH_*(C^{BP}_*) = \widetilde{\BP}_* ( \Z/p)$. Note the formal similarity to the chain complex 
$C_*$, computing the reduced singular homology $\widetilde{\HH}_*(\Z/p)$ considered before.

\begin{thm} \label{summary} There is a canonical $\BP_*$-linear  isomorphism 
\[
    \widetilde{\BP}_*(\wedge^n \Z/p) \cong \HH_* ( C^{BP}_* \otimes_{\BP_*} \cdots \otimes_{\BP_*} C^{BP}_*). 
\]
\end{thm}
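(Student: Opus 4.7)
The plan is to deduce Theorem~\ref{summary} from Theorem~\ref{main} (and Corollary~\ref{kun}) via an algebraic K\"unneth computation applied to the $n$-fold tensor product of $C^{BP}_*$. The base case $n=1$ is essentially built into the definition of $C^{BP}_*$: the differential vanishes in odd degrees and, in even degrees, it encodes precisely the formal-group-law relations $\sum_{i=0}^{m} a_i z_{m-i}=0$ defining $\widetilde{\BP}_*(\Z/p)$. One must also check that this differential is injective on the even part, so that $\HH_*(C^{BP}_*)\cong \widetilde{\BP}_*(\Z/p)$ is concentrated in odd total degree and the canonical identification is canonical. In particular, $C^{BP}_*$ is a length-one free $\BP_*$-resolution of $\widetilde{\BP}_*(\Z/p)$, so $\widetilde{\BP}_*(\Z/p)$ has $\BP_*$-projective dimension at most $1$.

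For the inductive step, apply the algebraic K\"unneth formula to the free $\BP_*$-complex $D_* = (C^{BP}_*)^{\otimes (n-1)}$ tensored with $C^{BP}_*$. Because $\widetilde{\BP}_*(\Z/p)$ has projective dimension at most one, this yields a short exact sequence
\[
0 \to \HH_*(D_*) \otimes_{\BP_*} \widetilde{\BP}_*(\Z/p) \to \HH_*(D_* \otimes_{\BP_*} C^{BP}_*) \to \Tor^{\BP_*}_1(\HH_*(D_*),\widetilde{\BP}_*(\Z/p))_{*-1} \to 0.
\]
Invoking the induction hypothesis $\HH_*(D_*)\cong \widetilde{\BP}_*(\wedge^{n-1}\Z/p)$, the above is formally identical to the Landweber K\"unneth sequence of Corollary~\ref{kun}. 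Corollary~\ref{kun} then provides a canonical $\BP_*$-linear splitting on the topological side, and the same sequence on the algebraic side splits canonically as well, by induction combined with the observation that the Tor term is itself a direct sum of terms of the shape described in Theorem~\ref{main}.

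To complete the identification I would check that the $\Tor_1^{\BP_*}$ summands appearing in the iterated algebraic K\"unneth match the modules $(L_k)_*$ from Theorem~\ref{main}. Using the explicit resolution $C^{BP}_*$ to compute these Tors, one finds that tensoring with $\widetilde{\BP}_*(\Z/p)$ and passing to $\Tor_1$ implements precisely the degree shift and the truncation visible in the definition of $(L_k)_*$ (with the bound $p^k$ arising from the formal group law modulo successively higher $v_k$). Splicing these identifications together via the splitting of Theorem~\ref{splitting} produces a canonical $\BP_*$-linear isomorphism between $\HH_*((C^{BP}_*)^{\otimes n})$ and the direct sum from Theorem~\ref{main}, hence with $\widetilde{\BP}_*(\wedge^n \Z/p)$.

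The main obstacle is verifying the canonicity of this isomorphism. The algebraic K\"unneth formula by itself splits non-canonically, so one must show that the splitting inherited from $\Psi_n$ of Theorem~\ref{splitting} (and the canonical projection $\widetilde{\BP}_*(\CP^\infty)\to (L_k)_*$) agrees with one that can be constructed purely at the chain level. In effect, the delicate step is identifying the boundary of the algebraic K\"unneth sequence with the inessential-subgroup construction that governs $\Psi_n$, so that the two splittings coincide and the resulting isomorphism is independent of choices.
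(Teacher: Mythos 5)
Your proposal follows essentially the same route as the paper: compute $\HH_*\bigl((C^{BP}_*)^{\otimes n}\bigr)$ by the iterated algebraic K\"unneth exact sequence (the paper's chain map $\epsilon : C^{BP}_* \to F_1$ plays the role of the topological map $\pi : B\Z/p \to \CP^\infty$), use Corollary~\ref{wonderful} to identify the $\Tor$ terms with $N_*^k \otimes_{\BP_*} (L_k)_*$, and compare the resulting direct sum decomposition with the one for $\widetilde{\BP}_*(\wedge^n \Z/p)$ in Theorem~\ref{main}. Your observation that the cycles of $C^{BP}_*$ are free, so the algebraic K\"unneth sequence is available over $\BP_*$, is the correct justification for the step the paper compresses into ``in a completely analogous fashion.''

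The one place where your formulation is slightly off is the last paragraph. You frame the canonicity issue as needing to show that the chain-level splitting ``agrees'' with the inessential-subgroup splitting $\Psi_n$. That comparison is not required: the topological computation (Theorem~\ref{main}) produces a canonical isomorphism $\widetilde{\BP}_*(\wedge^n\Z/p) \cong \bigoplus J_1 \otimes \cdots \otimes J_n$, while the algebraic K\"unneth induction produces a second canonical isomorphism $\HH_*\bigl((C^{BP}_*)^{\otimes n}\bigr) \cong \bigoplus J_1 \otimes \cdots \otimes J_n$ onto the \emph{same} abstract direct sum, and Theorem~\ref{summary} is obtained simply by composing. There is no assertion, and no need, that the two splittings are ``the same'' in any deeper sense; each just has to be constructed without choices, which is what the inductive K\"unneth argument with $\epsilon$ and Corollary~\ref{wonderful} provides on the algebraic side.
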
 

 Again, Holzsager's stable splitting of $B\Z/p$ in \cite{Holz} leads to an alternative  proof of 
 Theorem \ref{summary} for $n \leq 2(p-1)$.

The above isomorphism is compatible with the isomorphism of Theorem \ref{main} in the sense that the right hand 
side  maps isomorphically to the right hand side of Theorem \ref{main} 
by sending each generator 
$c_{2m+1}$ to $z_{m} \in \widetilde{\BP}_*(B\Z/p)$ 
and each generator $c_{2m}$ to $y_{m} \in (L_k)_*$ (respectively to $0$ for $m \geq p^k$).

The $\BP_*$-module on the right of Theorem \ref{main} is not invariant under 
the canonical ${\rm Sym}_n$-action permuting the factors of $(\Z/p)^n$, and does not carry an 
induced $\BP_* \BP$-comodule 
structure. However, we can equally well consider the induced map 
\[
   \widetilde{BP}_*(\wedge^n \Z/p) \to \bigoplus J_1 \otimes_{\BP_*}  \cdots \otimes_{\BP_*} J_n,
\]
where now the sum is  over all tensor products with $J_i = \widetilde{\BP}_*( \Z/p)$ or $J_i = 
\widetilde{\BP}_*(\CP^{\infty})$ and at least one occurence of $\widetilde{\BP}_*( \Z/p)$. As a corollary to 
Theorem \ref{main} this is an injective $\BP_*$-linear map and a map of 
$\BP_* \BP$-comodules. It is equivariant with respect to the natural ${\rm Sym}_n$-actions 
on both sides. 

Since the beginning of equivariant bordism theory \cite{CF}  the toral element $z_0 \otimes \cdots \otimes z_0 \in \BP_n(\wedge^n \Z/p)$ has played a significant role. The solution of the Conner-Floyed conjecture, 
stating that the annihilator ideal of this element is equal to $(p,v_1,  \ldots, v_{n-1}) \subset \BP_*$, took more than 
fifteen years and was finally achieved in  \cite{RW, Mit}. This shows that 
extraordinary group homology is difficult to calculate, even for groups as basic as $(\Z/p)^n$.

For us it is important that Ravenel-Wilson's solution of the Conner-Floyd conjecture not only gives information 
on the annihilator ideal of the toral class or injectivity of the iterated K\"unneth map \cite{JW}, but implies a stronger statement: The image of the iterated K\"unneth 
map on the one hand, and the $\BP_*$-submodule of $\BP_*( (\Z/p)^n)$ generated by elements coming from 
proper subgroups of $(\Z/p)^n$, the inessential Brown-Peterson group homology, on the other, intersect trivially. 

The largest part of our paper is devoted to showing that the image of the iterated K\"unneth 
map and the inessential Brown-Peterson homology span the whole of  $\BP_*((\Z/p)^n)$. 
This involves a detailed examination of the Pontryagin product on $\BP( (\Z/p)^n)$ induced by 
the group structure on $(\Z/p)^n$.
 
As a consequence of these calculations we get an interesting level structure on $\widetilde{\BP}_*(\wedge^n \Z/p )$, which 
nicely complements Theorem \ref{summary}. 

\begin{defn} \label{genprod} For $1 \leq k \leq n$ let 
\[
     \widetilde{\BP}_*^{(k)}(\wedge^n  \Z/p  ) \subset \BP_* (\wedge^n \Z/p) 
\]
be the $\BP_*$-submodule generated by the images of the compositions
\[
     \widetilde{\BP}_*(\Z/p) \otimes_{\BP_*}
   \cdots \otimes_{\BP_*} \widetilde{\BP}_*(\Z/p) \stackrel{\Phi_k}{\longrightarrow} 
      \BP_*((\Z/p)^k) \stackrel{\phi_*}{\longrightarrow} 
       \BP_*((\Z/p)^n) \to \widetilde{\BP}_*(\wedge^n \Z/p) 
\]
where on the left we take a $k$-fold tensor product and $\phi : (\Z/p)^k \to (\Z/p)^n$ is some group homomorphism.    
 
 \end{defn}

 \begin{thm} \label{level} There is a direct sum decomposition 
 \[
     \widetilde{\BP}_*(\wedge^n \Z/p) = \bigoplus_{k=1}^n \widetilde{\BP}_*^{(k)}(\wedge^n \Z/p)
 \]
 as $\BP_*$-modules and $\BP_* \BP$-comodules. The summand $\widetilde{BP}_*^{(k)}(\wedge^n \Z/p)$ 
 corresponds to the homology classes in Theorem \ref{summary} which are represented 
 by chains involving exactly $k$ odd degree generators $c_{2m+1}$. 
 \end{thm}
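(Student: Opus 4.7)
My plan is to derive Theorem~\ref{level} from Theorem~\ref{main} together with a naturality analysis of subgroup pushforwards. Define $V_k \subseteq \widetilde{\BP}_*(\wedge^n\Z/p)$ to be the preimage under $\Gamma_n$ (from Theorem~\ref{main}) of the sum of those tensor-product summands $J_1 \otimes_{\BP_*} \cdots \otimes_{\BP_*} J_n$ for which exactly $k$ of the $J_i$ equal $\widetilde{\BP}_*(\Z/p)$. By Theorem~\ref{main}, $\widetilde{\BP}_*(\wedge^n\Z/p) = \bigoplus_{k=1}^n V_k$ as $\BP_*$-modules. Under the chain model of Theorem~\ref{summary}, which sends $c_{2m+1} \mapsto z_m$ and $c_{2m} \mapsto y_m$, the submodule $V_k$ corresponds to classes represented by chains with exactly $k$ odd-degree generators, giving the second assertion of the theorem. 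The remaining task is the identification $V_k = \widetilde{\BP}_*^{(k)}(\wedge^n\Z/p)$.

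I will prove the inclusion $\widetilde{\BP}_*^{(k)} \subseteq V_k$; the reverse then follows formally, since Theorem~\ref{generated} gives $\sum_k \widetilde{\BP}_*^{(k)} = \widetilde{\BP}_*(\wedge^n\Z/p)$, and projecting onto $V_k$ shows $V_k = \mathrm{pr}_k(\widetilde{\BP}_*^{(k)}) \subseteq \widetilde{\BP}_*^{(k)}$, as $\mathrm{pr}_k$ kills each $\widetilde{\BP}_*^{(j)}\subseteq V_j$ for $j\ne k$. For $\widetilde{\BP}_*^{(k)} \subseteq V_k$, I first reduce to injective $\phi:(\Z/p)^k\to(\Z/p)^n$: for non-injective $\phi$, factor $\phi = \iota\circ\pi$ with $\pi:(\Z/p)^k\twoheadrightarrow(\Z/p)^{k'}$, $k'<k$, and show $\pi_* \circ \Phi_k = 0$. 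Each component $\pi_j:(\Z/p)^k\to\Z/p$ either omits some source generator, in which case the corresponding $z$-factor in $\mathrm{Im}\,\Phi_k$ is pushed to $0$ in reduced BP-homology, or involves at least two source generators, in which case $\pi_j$ factors through the H-space multiplication $B\Z/p\wedge B\Z/p\to B\Z/p$, which vanishes in reduced BP-homology because Pontryagin products on $\widetilde{\BP}_*(B\Z/p)$ land in the trivial even-degree part.

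For injective $\phi$, I exploit naturality of the splittings $\Psi_\bullet$ of Theorem~\ref{splitting} and the vanishing of $\pi_*:\widetilde{\BP}_*(\Z/p)\to\widetilde{\BP}_*(\CP^\infty)$ in positive degrees (by parity). For each family $(X_i)$ indexing a summand of $V_\ell$ (with $\ell$ factors $X_i=B\Z/p$), the composite $\gamma_{(X_i)*}\circ\bar\phi_*\circ\Phi_k$ vanishes whenever $\ell\ne k$: if $\ell > k$, at least one target $B\Z/p$-factor must be supplied by an H-space multiplication of source factors and so vanishes in BP-homology; if $\ell < k$, the excess source $z$-factors must be absorbed through the BP-trivial maps $B\Z/p\to\CP^\infty$. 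Hence $\bar\phi_*(\mathrm{Im}\,\Phi_k)\subseteq V_k$. The main obstacle I expect is controlling these $\ell\ne k$ vanishings uniformly across all $(X_i)$ and all injective $\phi$, which demands careful bookkeeping with the K\"unneth decomposition of $\widetilde{\BP}_*(B(\Z/p)^k)$ refined by Theorem~\ref{splitting}, and possibly an argument in the chain model of Theorem~\ref{summary}. Compatibility with the $\BP_*\BP$-coaction in Theorem~\ref{level} is automatic, since $\widetilde{\BP}_*^{(k)}$ is a $\BP_*\BP$-subcomodule---it is generated by images of comodule maps, $\Phi_k$ being a comodule map by Theorem~\ref{splitting} and $\phi_*$ and the smash projection being standard comodule maps.
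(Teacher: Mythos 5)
Your overall strategy is the paper's: define $V_k$ as the $\Gamma_n$-preimage of those summands with exactly $k$ factors $N_* = \widetilde{\BP}_*(\Z/p)$, prove $\widetilde{\BP}_*^{(k)}(\wedge^n\Z/p) \subseteq V_k$, and deduce equality from the spanning statement; the chain-model identification and the comodule compatibility are also correctly handled. Two steps, however, do not hold up. First, the spanning $\sum_k \widetilde{\BP}_*^{(k)} = \widetilde{\BP}_*(\wedge^n\Z/p)$ cannot be quoted from Theorem \ref{generated}: that theorem is \emph{deduced} from Theorem \ref{level}, so this is circular. The spanning is supplied by Proposition \ref{surjalmost} (the surjectivity for $\kappa = \ell = n-k$, showing $\widetilde{\BP}_*^{(k)}$ surjects onto the $V_k$-component).

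More seriously, your vanishing mechanism for $\ell > k$ is false. Take the injective diagonal $\phi : \Z/p \to (\Z/p)^3$, $x \mapsto (x,x,x)$, with all $X_i = B\Z/p$ (so $\ell = 3 > 1 = k$). Every target component is a single source generator, so no target $B\Z/p$-factor ``is supplied by an H-space multiplication of source factors''; and $(\gamma_{(X_i)})_* \circ \phi_* \circ \Phi_1$ is in fact nonzero --- it is the reduced diagonal, whose image lies in the inessential submodule $K_*$. What must actually be shown is that the composite \emph{with $\Psi_\ell$} vanishes, i.e., that the image lands in $\ker\Psi_\ell$, and the paper obtains this by a mod-$p$ cohomology argument in the spirit of Proposition \ref{stretch} (using $s_i \cup s_i = 0$ for $p$ odd and $k < \ell$). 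For $\ell < k$ the rigorous ingredient is the non-squeezing Lemma \ref{squeeze} --- there is no nonzero $\BP_*$-linear map $N_*^k \to N_*^\ell \otimes F$ with $F$ free and $\ell < k$ --- which you do not invoke; the heuristic that ``excess $z$-factors are absorbed through BP-trivial maps $B\Z/p \to \CP^\infty$'' does not work because $\phi$ mixes the source factors, so one cannot route a fixed $z_{m_i}$ through a fixed $\CP^\infty$ target. You flag the $\ell \ne k$ vanishings as the main obstacle yourself, and that is precisely where the missing content lies.
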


 Theorem \ref{level} implies Theorem \ref{generated} from the beginning of this paper. We remark that the 
 decomposition in Theorem \ref{level} is (already for $n = 2$) not induced by a stable splitting of $B(\Z/p)^n$, as 
 constructed
\cite{MP, Mit_Split} for instance, because the $\F_p$-homology of each of the resulting wedge summands 
must be invariant under the Bockstein homomorphism.

Some of our results, including the computations in Section \ref{compute}, carry over to the case $p=2$ and 
hence have implications for free stably almost complex $(\Z/2)^n$-manifolds. 
However, we will show at the end of section \ref{bewmagic} 
 that the image of the 
iterated K\"unneth map and the inessential Brown-Peterson homology do intersect nontrivially for 
$p=2$, so that not all of our methods cover the case $p=2$. In particular  Theorem \ref{level}
requires odd $p$. 
 This is remarkable, because the Conner-Floyd conjecture eventually holds for all primes, see  \cite{Mit} 
 and \cite[Appendix]{JW}. 
 We conjecture that  Theorem \ref{main} remains true for $p=2$.


\section{Outline of proof} \label{back} 

For more detailed information on the following material we refer to \cite{JW} 
and \cite{WilBP}.
We have $\BP^*(\CP^{\infty}) = \BP^*[[x]]$ 
with a generator  $x \in \BP^2(\CP^{\infty})$ equal to the first Conner-Floyd characteristic class. 
The group homomorphism $S^1 \to S^1$, $t \mapsto t^p$,  induces a map $p : BS^1  \to BS^1$ and hence, 
via the formal group law for Brown-Peterson theory and the model $BS^1 = \CP^{\infty}$, leads to an equation 
\[
     (p)^* x = \sum_{i=0}^{\infty}  a_{i} \cdot x^{1+i} \in \BP^*(\CP^{\infty})
\]
with $a_{i} \in  \BP_{2i}$, $i \geq 0$. It is well known that the generators $v_m \in \BP_{2p^m-2}$ can be chosen 
such that 
\[
     a_{p^m-1} \equiv v_m \mod (v_0, \ldots, v_{m-1}) 
\]
for $m \geq 0$. Here we set $v_0 := p$ so that in particular $a_0 = p$. 
We have elements $\beta_{m} \in \BP_{2m}(\CP^{\infty})$
dual to $x^m$. These are represented by the standard inclusions $\CP^{m} \hookrightarrow \CP^{\infty}$,
and satisfy the equation 
\[
   \beta_{m} \cap x  = \beta_{m-1} 
\]
for $m \geq 1$, where we set $\beta_0 := 1 \in \BP_0(\CP^{\infty})$. 
 The Gysin sequence associated to the fibration 
\[
     S^1 \hookrightarrow B \Z/p \stackrel{\pi}{\to} \CP^{\infty} 
\]
shows that 
\[
   \BP_{2m+1}(\Z/p) = \coker ( - \cap (p)^*x : \BP_{2m+2}(\CP^{\infty}) \to  \BP_{2m}(\CP^{\infty}) ).
\]
Hence the generators $\beta_{2m} \in \BP_{m}(\CP^{\infty})$ induce generators $z_{m} \in \BP_{2m+1}(\Z/p)$ 
for $m \geq 0$, that are subject to the relations 
\[
    \sum_{i=0}^{m} a_{i} \cdot z_{m-i} = 0. 
\]
Note that in \cite{JW} a slightly different notation is used, where $z_m$ denotes the generator in 
$\BP_{2m-1} ( \Z/p)$. We have $z_{m} = t_*(\beta_{m})$ with a stable transfer $t : \Sigma \CP^{\infty} \to B \Z/p$, see \cite[(2.12.)]{JW}. From this we obtain relations 
\[
   z_{m} \cap \pi^{*}(x) = z_{m-1}
\]
for $m\geq 1$. Also the stable transfer map allows us to compute the $\BP_* \BP$-comodule structure on 
$\widetilde{\BP}_*(\Z/p)$ from the one on $\widetilde{\BP}_*(\CP^{\infty})$, whis is well known, 
see for example \cite[Theorem 1.48]{WilBP}. 
In accordance with Section \ref{calculation} the element $z_{m}$ is represented by the classifying map 
$ L^{2m+1} \to B\Z/p$ of the standard lens space $L^{2m+1}$.

Following \cite{JW} we set
\[
   N_* := \widetilde{\BP}_*(\Z/p) 
\]
and use the shorthand 
\[
    N_*^k := N_* \otimes_{BP_*} \cdots \otimes_{BP_*} N_* 
\]
with $k$ tensor factors. 

From the above description of $\BP_{2m+1}(\Z/p)$ we get a free resolution 
\[
   (F_1)_* \stackrel{f_1}{\longrightarrow} (F_0)_* \stackrel{f_0}{\longrightarrow} N_{*-1} \to 0 
\]
where $F_1$ and $F_0$ are free graded $\BP_*$-modules in generators $y_m$ of degree $2m$, $m > 0$,
and 
\[
    f_1(y_{m}) = \sum a_{i} \cdot y_{m-i} \, , \, f_0(y_{m}) = z_{m-1}. 
\]
Hence for any graded $\BP_*$-module $M_*$ we have 
\[
   ( \Tor_{\BP_*}(M_*, N_*))_{m-1} = \ker \big( \id \otimes_{\BP_*} f_1 : M_* \otimes_{\BP_*} F_1 \to M_*
    \otimes_{\BP_*} F_0 \big)_m \subset (M_* \otimes_{\BP_*} F_1)_m. 
\]

The $\BP_*$-homology of $\wedge^n B \Z/p$ fits into Landweber's exact K\"unneth 
sequence \cite{Landweber} 
\[
  0 \to \widetilde{\BP}_*(\wedge^{n-1} \Z/p) \otimes_{\BP_*} N_* \to \widetilde{\BP}_* ( 
    \wedge^n \Z/p) \to \big(\Tor_{\BP_*}(\widetilde{\BP}_*(\wedge^{n-1} \Z/p), N_*)\big)_{*-1} \to 0.
\]
Using the above free resolution of $N_*$ the map $\widetilde{\BP}_* (\wedge^n \Z/p) \to \big(\Tor_{\BP_*}(\widetilde{\BP}_*(\wedge^{n-1} \Z/p), N_*)\big)_{*-1}$
has the following geometric interpretation, cf. \cite[Section 5]{JW}. We define $C$ as the stable cofibre 
in 
\[
       B\Z/p \stackrel{\pi}{\to} \CP^{\infty}  \to C. 
\]
After taking the smash product with $\wedge^{n-1} B\Z/p$ on the left, the induced long exact sequence in 
$\BP_*$-theory induces a short exact sequence
\[
     0 \to \coker ( \id \otimes_{\BP_*} f_1 ) \to \widetilde{BP}_*(\wedge^n \Z/p) \to \ker ( \id \otimes_{\BP_*} f_1) \to 0  
\]
which can be identified with the exact K\"unneth sequence (with a degree shift in the torsion 
product). In this description the map in 
the K\"unneth sequence is induced on the space level by 
\[
 \gamma_{(B\Z/p, \ldots, B\Z/p, \CP^{\infty})} =  \id \wedge \pi :  \bigwedge_n B \Z/p \to  \bigwedge_{n-1} B\Z/p \wedge \CP^{\infty}. 
\]

The proofs of Theorems \ref{splitting} and \ref{main} are parallel and by induction on $n$. We hence 
assume that Theorem \ref{main} holds for $n-1$.
According to Corollary \ref{wonderful} below (also see \cite[Theorem 4.1]{JW}) the composition 
\[
  ( \Tor_{\BP_*}(N_*^k , N_*))_{*-1} = \ker \big( \id \otimes_{\BP_*} f_1 :N_*^k \otimes_{\BP_*} F_1 \to N_*^k \otimes_{\BP_*}  F_0\big)_* \to N_*^k \otimes_{\BP_*} L_k 
\]
is an isomorphism for $1 \leq k \leq n-1$. We hence get a commutative diagram 
\[
   \xymatrix{
  \big( \Tor_{\BP_*}( \widetilde{\BP}_*( \wedge^{n-1}  \Z/p) , N_*) \big)_{*-1}    \ar@^{(->}[r]    \ar[d]_{\cong}^{\Tor_{\BP_*}(\Gamma_{n-1}, \id)}& \widetilde{\BP}_*(\wedge^{n-1}  \Z/p) \otimes_{\BP_*} \widetilde{\BP}_*(\CP^{\infty}) \ar[d]^{\Gamma_{n-1} \otimes {\rm proj.}}  \\ 
   \big( \Tor_{\BP_*} ( \bigoplus J_1 \otimes \cdots \otimes J_{n-1}, N_*)\big) _{*-1}\ar[r]^-{\cong} &  \bigoplus_{J_n = L_k} J_1 \otimes \cdots \otimes J_n                    
 } 
\]
where $\Gamma_{n-1}$ is taken from Theorem \ref{main} and the subscript $J_n = L_k$ indicates
that we are just considering those summands from Theorem \ref{main} with $J_n = (L_k)_*$ (with appropriate $k$).


 As in Theorem \ref{main} 
we have a canonical map 
\[
 \Gamma_n :   \widetilde{\BP}_*(\wedge^n \Z/p) \to \bigoplus_{J_1 \otimes \cdots \otimes J_n \neq N_*^n}  J_1 \otimes \cdots \otimes J_n,
\]
where the subscript means that we just sum over those tensor factors in Theorem \ref{main} 
that are not equal to $N_*^n$. This is achieved by first applying $\pi : B \Z/p \to \CP^{\infty}$ to at least one factor of $\wedge^n B\Z/p$ 
and then applying the splitting of Theorem \ref{main} to the $\BP$-homology of the 
smash product of the remaining copies of  $B\Z/p$. 
 Note that Theorem \ref{main} for $n-1$ (or less) is sufficient for this construction.

\begin{prop} \label{zerl}  Let 
\[
     K_*  \subset \widetilde{\BP}_*(\wedge^n \Z/p) 
\]
be a $\BP_*$-submodule and $\BP_* \BP$-sub-comodule which maps surjectively 
onto the image of the map 
\[
   \Gamma_n : \widetilde{\BP}_*(\wedge^n \Z/p) \to \bigoplus_{J_1 \otimes \cdots \otimes J_n \neq N_*^n}  J_1 \otimes \cdots \otimes J_n 
\]
defined above.

Let  $\Phi_n :   N_*^n    \to \widetilde{\BP}_*(\wedge^n \Z/p)$
be the iterated K\"unneth map and assume that 
\[
   K_* \cap \im \Phi_n   = 0 .
\]
Then both Theorems \ref{splitting} and \ref{main} hold for $n$.       

\end{prop}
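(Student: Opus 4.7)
The plan is to establish the internal direct sum decomposition
\[
\widetilde{\BP}_*(\wedge^n \Z/p) = \im \Phi_n \oplus K_*
\]
as $\BP_*\BP$-comodules; both theorems then follow immediately. For this I would first prove two structural facts about the canonical map $\Gamma_n$ that do not mention $K_*$:
(a) $\Gamma_n$ is surjective onto $A := \bigoplus_{J_1 \otimes \cdots \otimes J_n \neq N_*^n} J_1 \otimes_{\BP_*} \cdots \otimes_{\BP_*} J_n$;
(b) $\ker \Gamma_n = \im \Phi_n$.

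The inclusion $\im \Phi_n \subseteq \ker \Gamma_n$ in (b) is essentially free: since $\widetilde{\BP}_*(\Z/p)$ lives in odd degrees while $\widetilde{\BP}_*(\CP^\infty)$ lives in even degrees, the $\BP_*$-linear map $\pi_*$ vanishes, so applying $\pi$ to any smash factor kills every pure tensor in $\im \Phi_n$. For the reverse inclusion, if $\Gamma_n(x) = 0$ then in particular the components corresponding to $J_n = (L_k)_*$ vanish, which via the commutative diagram displayed in the text forces $\rho(x) = 0$; by exactness of Landweber's K\"unneth sequence, $x = \Phi(\tilde x)$ with $\tilde x \in \widetilde{\BP}_*(\wedge^{n-1} \Z/p) \otimes_{\BP_*} N_*$. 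The inductive hypothesis (Theorem~\ref{main} for $n-1$) splits $\tilde x = y + y'$ with $y \in N_*^n$ and $y'$ in the complement, and the vanishing of the remaining components of $\Gamma_n(x)$ forces $y' = 0$, again by induction. Surjectivity (a) is proved in the same spirit: the $J_n = (L_k)_*$ summands are hit via the $\Tor/\rho$ diagram, and the summands with $J_n = N_*$ but some earlier $J_i = (L_{k_i})_*$ are reached by applying $\pi$ to that subset of factors and invoking $\Gamma_{n-1}$ on the rest.

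Given (a) and (b), the proposition becomes formal. The hypothesis $K_* \cap \im \Phi_n = 0$ together with (b) makes $\Gamma_n|_{K_*}$ injective, while $K_* \twoheadrightarrow \im \Gamma_n = A$ makes it surjective onto $A$; hence $\Gamma_n$ restricts to an isomorphism $K_* \cong A$. For any $x$ one can find $k \in K_*$ with $\Gamma_n(k) = \Gamma_n(x)$, so $x - k \in \ker \Gamma_n = \im \Phi_n$, giving the direct sum. Injectivity of $\Phi_n$ from the Conner--Floyd conjecture (Ravenel--Wilson, cited earlier) yields $\Phi_n : N_*^n \cong \im \Phi_n$, and the decomposition becomes the isomorphism of Theorem~\ref{main}. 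For Theorem~\ref{splitting}, I would define $\Psi_n$ as the projection to $\im \Phi_n$ along $K_*$, post-composed with $\Phi_n^{-1}$; since both summands are $\BP_*\BP$-subcomodules (for $\im \Phi_n$ because $\Phi_n$ is a K\"unneth map, for $K_*$ by hypothesis), this projection is a comodule map, so $\Psi_n$ is the desired comodule splitting.

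The main obstacle is the reverse inclusion in (b): carrying out the inductive bookkeeping to convert componentwise vanishing of $\Gamma_n(x)$ into $x \in \im \Phi_n$, since $\Gamma_n$ aggregates a family of projections indexed by subsets of $\{1,\ldots,n\}$ and one must track how the inductive Theorem~\ref{main} for $n-1$ interacts with each of them simultaneously.
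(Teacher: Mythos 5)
Your proposal is correct and takes essentially the same route as the paper. Your claims (a) surjectivity of $\Gamma_n$ and (b) $\ker \Gamma_n = \im \Phi_n$ are exactly what the two commutative diagrams in the text encode (the first identifies the $J_n = (L_k)_*$ components with the Tor target of the K\"unneth sequence, the second identifies the kernel of the $J_n = N_*$ projection restricted to $\widetilde{\BP}_*(\wedge^{n-1}\Z/p) \otimes_{\BP_*} N_*$ with $\im \Phi_n$), and the paper's proof carries out precisely the diagram chase you sketch for the reverse inclusion in (b); your framing that isolates (a) and (b) as $K_*$-independent structural facts before doing the formal complementation step is a slightly cleaner organization of the same argument, not a different one.
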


\begin{proof} Using the K\"unneth sequence and the above commutative diagram the assumption on $K_*$ leads to 
\begin{equation} \label{summe} 
   \widetilde{\BP}_*( \wedge^n \Z/p) = K_* + \widetilde{\BP}_*(\wedge^{n-1}  \Z/p) \otimes_{\BP_*} N_*.
\end{equation} 
Because Theorem \ref{main} holds for $n-1$ we have a canonical isomorphism 
\[
    \widetilde{\BP}_*(\wedge^{n-1}  \Z/p) \otimes_{\BP_*} N_* = \bigoplus_{J_n = N_*} J_1 \otimes_{\BP_*} \otimes \cdots \otimes_{\BP_*} J_n
 \]
where the subscript $J_n = N_*$ means that we restrict ourselves to the tensor product summands 
with $J_n = N_*$.  Now consider the commutative diagram 
\[
   \xymatrix{
      \widetilde{\BP}_*(\wedge^{n-1}  \Z/p) \otimes_{\BP_*} N_* \ar[d]^{\cong}  \ar@{^{(}->}[r]  & \widetilde{\BP}_*(\wedge^{n}  \Z/p) \ar[d]^{{\rm proj.}\,  \circ \, \Gamma_n}\\ 
     \bigoplus_{J_n = N_*} J_1 \otimes  \cdots \otimes J_n  \ar[r] & \bigoplus_{J_1 \otimes \cdots \otimes J_n  
     \neq N_*^n, J_n  = N_*} J_1 \otimes \cdots \otimes J_n
 } 
\]
where the horizontal map on the bottom is a projection.


By assumption the right hand vertical map is surjective after 
restriction to $K_* \subset \widetilde{\BP}_*( \wedge^n \Z/p)$. Together with 
equation \eqref{summe} this implies 
\[
    \widetilde{\BP}_*(\wedge^n \Z/p) = K_* +  \im \Phi_n
\]
and with the assumption $K_* \cap \im \Phi_n= 0$ we conclude
\[
    \widetilde{\BP}_*(\wedge^n \Z/p) \cong K_* \oplus \im \Phi_n 
\]
as $\BP_*$-modules and $\BP_* \BP$-comodules.

The iterated K\"unneth map $\Phi_n$ is injective 
\cite[Corollary 3.3]{JW} (this also follows from the injectivity of the usual K\"unneth map and 
our assumption that Theorem \ref{main} holds for $n-1$). From this  
Theorem \ref{splitting} follows  for $n$. 

Once this has been established Theorem \ref{main} for $n$ follows by induction using the splitting 
$\Psi_n$ and the canonical isomorphisms $\Tor_{\BP_*}(N^k_*,N_*)_{*-1} \cong 
N_*^k \otimes_{\BP_*} L_k$ from Corollary \ref{wonderful} for $k \leq n-1$. 
Indeed, using 
Theorem \ref{main} for $n-1$, we then obtain the isomorphism 
\[
\big( \Tor_{\BP_*}( \widetilde{\BP}_*(\wedge^{n-1}  \Z/p), N_*)\big)_{*-1} \cong \big(\Tor_{\BP_*}(\bigoplus
     J_1 \otimes \cdots \otimes J_{n-1}, N_*)\big)_{*-1} \cong \bigoplus_{J_n = L_k} J_1 \otimes \cdots \otimes J_n.
\]
And this, together with the fact that the map 
\[
  \widetilde{\BP}_* ( \wedge^n \Z/p) \to \big(\Tor_{\BP_*}(\widetilde{\BP}_*(\wedge^{n-1} \Z/p), N_*)\big)_{*-1}
\]
in the K\"unneth exact sequence is induced by $ \id \wedge \pi :  \wedge^n B \Z/p \to  \wedge^{n-1} B\Z/p \wedge \CP^{\infty}$,
concludes the proof of Theorem \ref{main} for $n$. 
\end{proof}

For the proof of Theorem \ref{summary} observe that the homology $\HH_* ( C_* \otimes_{\BP_*} \cdots \otimes_{\BP_*} C_*)$ (we now write $C_*$ instead of $C_*^{BP}$) can be computed by induction in a completely analogous fashion
as $\widetilde{\BP}_*(\wedge^n \Z/p)$. The only adjustment consists in 
replacing the topological map $\pi :  B \Z/p \to \CP^{\infty}$
by the chain map 
\begin{eqnarray*} 
   \epsilon:   C_* & \to & F_1 \\
        c_{d}            & \mapsto & \begin{cases} y_m {\rm~for~} d = 2m, \\ 0 {\rm~for~} d {\rm~odd}, \end{cases} 
\end{eqnarray*} 
where $F_1$ is equipped with the zero differential. 
We hence get a K\"unneth sequence (setting $C_*^k := C_* \otimes_{\BP_*}  \cdots \otimes_{\BP_*}  C_*$)
\[
    0 \to \HH_*(C_*^{n-1} ) \otimes_{\BP_*} \HH_*(C_*) \to \HH_*(C_*^{n}) \to \big(\Tor_{\BP_*}(\HH_*
    (C_*^{n-1}) , \HH_*(C_*))\big)_{*-1} \to 0 
\]
induced by the map 
\[
     \HH_*(C_*^{n}) \to \HH_*(C_*^{n-1}) \otimes_{\BP_*} F_1 
\]
which on the chain level is given by $\id \otimes \epsilon : C_*^{n} \to C_*^{n-1} \otimes F_1$. 

This and an iterated use of   Corollary \ref{wonderful} show that there is a canonical isomorphism 
of $\BP_*$-modules
\[
 \HH_*(C_*^n) \cong \bigoplus J_1 \otimes_{\BP_*}  \cdots \otimes_{\BP_*} J_n                                                                                                      
\]
much as in Theorem \ref{main}. 

It remains to find a submodule $K_* \subset \widetilde{\BP_*}(\wedge^n \Z/p)$ enjoying the 
properties described in Proposition \ref{zerl}. 

\begin{defn} \label{inessential} Consider the  $\BP_*$-submodule of $\BP_*((\Z/p)^n)$which is  generated
by the images $\phi_*(\BP_*((\Z/p)^k))$, where  $\phi : (\Z/p)^k \to (\Z/p)^n$ is a group homomorpism and 
$k < n$. The image of this submodule in $\widetilde{\BP}_*(\wedge^n \Z/p)$ is called the 
{\em (reduced) inessential Brown-Peterson homology} of $(\Z/p)^n$. We denote this submodule by $K_*$. 
This is a $\BP_* \BP$-sub-comodule of $\widetilde{\BP}_*(\wedge^n \Z/p)$.
\end{defn}

Recall that the {\em essential cohomology} of a group $G$ is defined as the ideal 
in  $\HH^*(G)$ consisting of those classes that restrict to $0$ under all inclusions of proper subgroups 
$H < G$. Definition \ref{inessential} can be viewed as a dual notion for Brown-Peterson group homology. 

\begin{thm} \label{magic} The module $K_*$ 
fulfills the requirements of Proposition \ref{zerl}.
\end{thm}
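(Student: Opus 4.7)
The plan is to verify the three requirements of Proposition \ref{zerl} for $K_*$: that it is a $\BP_*$-submodule and $\BP_*\BP$-sub-comodule, that it surjects onto $\im \Gamma_n$, and that $K_* \cap \im \Phi_n = 0$. The first is automatic from Definition \ref{inessential}: $K_*$ is a submodule by construction, and by naturality of the $\BP_*\BP$-coaction with respect to the classifying maps $B\phi$, each image $\phi_*(\BP_*((\Z/p)^k))$ is a sub-comodule, hence so is $K_*$. For the trivial intersection, I would invoke the strengthened form of Ravenel--Wilson/Mitchell alluded to after Theorem \ref{level}, which asserts not merely that the annihilator ideal of the toral class $z_0 \otimes \cdots \otimes z_0$ is $(p, v_1, \ldots, v_{n-1})$, but that $\im \Phi_n \cap K_* = 0$ inside $\widetilde{\BP}_*(\wedge^n \Z/p)$.

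The bulk of the work is the surjectivity onto $\im \Gamma_n$. For each summand indexed by $J_1, \ldots, J_n$ in the target (with at least one $J_i = (L_{k_i})_*$), set $S := \{i : J_i = N_*\} \subsetneq \{1, \ldots, n\}$. The obvious candidate is the pushforward of a product of lens-space classes $\Phi_{|S|}(z_{m_1} \otimes \cdots \otimes z_{m_{|S|}})$ along the subgroup inclusion $(\Z/p)^S \hookrightarrow (\Z/p)^n$. This class lies in $K_*$ and, by the inductive form of Theorem \ref{main} (with $|S| < n$), hits the $N_*$-factors in positions $S$. It contributes nothing, however, to the $(L_{k_i})_*$-components in the complementary coordinates, so these bare pushforwards alone cannot surject onto $\im \Gamma_n$.

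Recovering the missing $(L_{k_i})_*$-contributions is the hard part, and is where the Pontryagin product on $\BP_*((\Z/p)^n)$ induced by the group law must be exploited. Pontryagin products of pushforwards coming from \emph{different} proper subgroups of $(\Z/p)^n$ generate further inessential classes; via the compatibility of the stable transfer $t : \Sigma\CP^{\infty} \to B\Z/p$ with the group multiplication, their $\Gamma_n$-images pick up the required $(L_k)_*$-components in the complementary coordinates. I expect the proof to proceed by a downward induction on $|S|$, working modulo summands with strictly larger $|S|$ that have already been accounted for; the induction step should reduce to an explicit Pontryagin-product computation in the $z_m$ and the transferred $\beta_m$ generators, and should use Corollary \ref{wonderful} to identify the resulting torsion-product contributions. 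Producing these Pontryagin-product expressions and checking that they hit the correct summands in $\im \Gamma_n$ is the central technical obstacle, and is what occupies the remaining sections of the paper.
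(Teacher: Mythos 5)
Your overall outline is correct — verify the sub-comodule property, surjectivity onto $\im\,\Gamma_n$, and $K_* \cap \im\,\Phi_n = 0$ — and the observation that bare pushforwards along coordinate inclusions contribute nothing to the $(L_k)_*$-factors is on target. But there are two problems, one of which is a genuine gap.

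The serious issue is the trivial-intersection step. You propose to ``invoke the strengthened form of Ravenel--Wilson/Mitchell alluded to after Theorem~\ref{level}, which asserts \ldots that $\im\,\Phi_n \cap K_* = 0$.'' No such pre-existing theorem exists: the passage in Section~\ref{calculation} you are reading is \emph{describing the content of Theorem~\ref{magic} itself}, not citing a known result. The actual Ravenel--Wilson input is only Corollary~\ref{nutz}: the map $(\mu_n)_* : \BP_*((\Z/p)^n) \to \BP_*(K(\Z/p,n))$ is injective on $\Phi_n(\BP_* \cdot (z_0 \otimes \cdots \otimes z_0))$. To leverage this one must in addition prove (a) Proposition~\ref{reduce}, a cap-product argument with the classes $t_i = \pi^*(x)$ reducing the general intersection statement to the $\BP_*$-span of the toral class, using that capping preserves $K_*$ by naturality; and (b) Proposition~\ref{stretch}, the statement that $\mu_n \circ B\phi$ is null for any $\phi : (\Z/p)^k \to (\Z/p)^n$ with $k < n$, which forces $(\mu_n)_*(K_*) = 0$. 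Step~(b) is where the hypothesis that $p$ is odd enters essentially ($s_i \cup s_i = 0$ in $\HH^*(\Z/p;\F_p)$ fails for $p=2$), and without it the intersection is actually nonzero, as the paper's $p=2$ example at the end of Section~\ref{bewmagic} shows. Your proposal skips all of this.

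On the surjectivity side, your ``Pontryagin product + transfer'' route is plausibly equivalent in spirit to what the paper does (a Pontryagin product of pushforwards from $(\Z/p)^{k_1}$ and $(\Z/p)^{k_2}$ is itself a pushforward from $(\Z/p)^{k_1+k_2}$), but your sketch leaves out the actual engine. The paper works directly with explicit group homomorphisms $\phi_\Lambda : (\Z/p)^k \to (\Z/p)^n$ that insert $\F_p$-linear combinations $y_j = \sum_i \lambda_{j,i} x_i$ at selected positions, proves surjectivity in $\F_p$-(co)homology by a Vandermonde-determinant computation (Proposition~\ref{homcomp}), lifts this to $\BP$-theory through the filtration argument of Proposition~\ref{surj} using Lemma~\ref{kernel}, and organizes the induction by the lexicographic order on the index set of $\CP^\infty$-positions (Corollary~\ref{conseqlex}), with the non-squeezing Lemma~\ref{squeeze} killing the over-degenerate terms. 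Your ``downward induction on $|S|$'' is coarser than this lexicographic induction, and your ``explicit Pontryagin-product computation in $z_m$ and transferred $\beta_m$'' is not yet an argument — the Vandermonde determinant over $\F_p[t_1,\ldots,t_k]$ is the key identity that makes the surjectivity go through, and it is missing from your sketch.
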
 

This will be proved in Section \ref{bewmagic} below. 


\section{Preliminaries on $N_*^k$} \label{prelim}

We need some preparation concerning the structure of $N_*^k =\widetilde{\BP}_*(\Z/p) \otimes_{\BP_*} \cdots
\otimes_{\BP_*} \widetilde{\BP}_*(\Z/p)$ for $k \geq 1$, extending 
 results in \cite[Section 3]{JW}, from which we borrow our notation (with the exception that 
 $z_m \in N_{2m+1}$ for $m \geq 0$).

For $I= (i_1, \ldots, i_{k})\in \N^k$ we write $z_I := z_{i_1} \otimes \cdots \otimes z_{i_{k}}\in N_*^k$. 
The $\BP_*$-module $N_*^k$ is generated by the elements $z_I$. It is worthwhile to consider the increasing $\BP_*$-module filtration of $N_*^{k}$ given by 
\[
    \mathcal{F}_J (N^k) :=  \langle z_I | I \leq J \rangle_{\BP_*} \subset N_*^{k}
\]
using the lexicographic order on $\N^k$. Let $E_{*}(N_*^k)$ be the associated graded $\BP_*$-module. 

We know from \cite[Theorem 3.2]{JW} 
 that $E_*(N_*^k)$ is a free module over $\BP_* / (v_0, \ldots, v_{k-1})$. This implies the following 
 non-squeezing result. 

\begin{lem} \label{squeeze} Let $k >  \ell$ and let 
\[
   \phi :    N_*^k \to N_*^{\ell} 
\]
be a not necessarily grading preserving $\BP_*$-linear map. Then $\phi$ is equal to zero. 
\end{lem}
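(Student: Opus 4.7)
The plan is to exploit the filtration result just cited, namely that the associated graded $E_*(N_*^k)$ is free over $\BP_*/(v_0, \ldots, v_{k-1})$. Two consequences will drive the proof. First, since $v_0, \ldots, v_{k-1}$ act as zero on each associated graded piece, multiplication by any $v_i$ with $i < k$ on $N_*^k$ strictly lowers the lexicographic filtration level. Second, for $i \geq \ell$ the $v_i$ are the polynomial generators of $\BP_*/(v_0,\ldots,v_{\ell-1})$, hence act as nonzero divisors on $E_*(N_*^\ell)$. The hypothesis $k > \ell$ lets us use $v_\ell$ on both sides simultaneously: strictly filtration-lowering on the source, injective on the target.

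First I would verify that multiplication by $v_\ell$ on $N_*^k$ is locally nilpotent, i.e., for every $x \in N_*^k$ there is an $N \geq 1$ with $v_\ell^N x = 0$. For nonzero $x$, set $F(x) := \min\{J \in \N^k : x \in \mathcal{F}_J(N^k)\}$; this minimum exists because $(\N^k, \leq_{\mathrm{lex}})$ is a well-order and the filtration is exhaustive. Since $\ell < k$, either $v_\ell x = 0$ or $F(v_\ell x) < F(x)$. Iterating produces a strictly descending chain in $(\N^k, \leq_{\mathrm{lex}})$, which must terminate by well-foundedness; termination is exactly $v_\ell^N x = 0$.

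Next I would show that multiplication by $v_\ell$ is injective on $N_*^\ell$. Because $v_\ell$ lies in the range $i \geq \ell$, it preserves the filtration pieces $\mathcal{F}_J(N^\ell)$ and descends to an action on $E_J(N_*^\ell)$ that is injective, since $v_\ell$ is a nonzero divisor on any free $\BP_*/(v_0,\ldots,v_{\ell-1})$-module. Given a putative $y \neq 0$ with $v_\ell y = 0$ and $J = F(y)$, the class $[y] \in E_J(N_*^\ell)$ is nonzero, yet $v_\ell [y] = [v_\ell y] = 0$, a contradiction.

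Assembling the pieces, given any $\BP_*$-linear map $\phi : N_*^k \to N_*^\ell$ and any $x \in N_*^k$, choose $N$ with $v_\ell^N x = 0$. Then $v_\ell^N \phi(x) = \phi(v_\ell^N x) = 0$ in $N_*^\ell$, and injectivity of multiplication by $v_\ell$ on $N_*^\ell$ forces $\phi(x) = 0$. The only delicate point is making the filtration dichotomy precise on both sides at once; the well-foundedness of the lex order on $\N^k$ produces the local nilpotence on the source, while the freeness of the associated graded produces the injectivity on the target, and both inputs rely on nothing beyond the already-cited structure of $E_*(N_*^k)$.
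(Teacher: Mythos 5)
Your proof is correct and follows exactly the paper's argument: both rely on the two facts that every element of $N_*^k$ is $v_\ell$-torsion (since $v_\ell$ with $\ell<k$ annihilates $E_*(N_*^k)$) while multiplication by $v_\ell$ is injective on $N_*^\ell$ (since $E_*(N_*^\ell)$ is free over $\BP_*/(v_0,\ldots,v_{\ell-1})$), and then combine these via $\BP_*$-linearity of $\phi$. You have merely spelled out more carefully the well-foundedness argument that the paper leaves implicit; note also that the paper has a typo writing $c\neq 0\in N_*^k$ where $N_*^\ell$ is meant, which you correctly avoid.
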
 

\begin{proof} We consider multiplication by powers of the generator $v_{\ell}  \in \BP_*$. 
On the one hand,  every element in 
$N_*^k$ is $v_{\ell}$-torsion because multiplication by $v_{\ell}$ is zero on $E_*(N_*^{k})$. 
On the other hand,  if $ c \neq 0\in N_*^k$, then for all $\nu \geq 0$ we have 
$(v_{\ell})^{\nu} \cdot c \neq 0$ because $E_*(N_*^{\ell})$ is free over $\BP_* / (v_0, \ldots, v_{\ell-1})$. 
From this our assertion follows. 
\end{proof}



Some  of our later computations will first be carried out in singular homology and then 
translated to $\BP$-homology. For this transition we need the following preliminaries. 

Let  $H_* := \widetilde{\HH}_*(\Z/p)$ and let $u_* : N_* \to H_*$ be the homological 
orientation. The module $H_*$ is generated by 
\[
   h_{m} := u(z_{m}) \in H_{2m+1},
\]
where $ m \geq 0$. As for $\BP$-homology we have a filtration 
\[
     \mathcal{F}_J (H_*^k) :=  \langle h_I | I \leq J \rangle \subset H_*^{k} := H_* \otimes \cdots 
     \otimes H_*
\]
and an associated graded $\F_p$-module $E_*(H_*^k)$, which is canonically isomorphic to $H_*^k$. 

In the following we fix $k$ and consider the ideal 
\[
   R := (v_k, v_{k+1}, \ldots ) \subset \BP_* . 
\]

\begin{lem} \label{kernel} The kernel of the (obviously surjective) map 
\[
   u_*^k := u_* \otimes \cdots \otimes u_* :     N_*^k \to H_*^k
\]
is equal to $R   \cdot N_*^k$. 
\end{lem}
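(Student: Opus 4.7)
The plan is to prove the two inclusions separately. For $R \cdot N_*^k \subseteq \ker u_*^k$, the argument is immediate from $\BP_*$-linearity: the module $H_* = \widetilde{\HH}_*(\Z/p)$ is an $\F_p$-module, hence annihilated by the full augmentation ideal $(v_0, v_1, v_2, \ldots) \subset \BP_*$, and this property is inherited by the $\BP_*$-tensor product $H_*^k$. In particular $R$ annihilates $H_*^k$, so $u_*^k(v_i \cdot z) = v_i \cdot u_*^k(z) = 0$ for any $v_i \in R$ and any $z \in N_*^k$.

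For the reverse inclusion $\ker u_*^k \subseteq R \cdot N_*^k$, I would pass to the associated graded modules of the filtrations $\mathcal{F}_J(N_*^k)$ and $\mathcal{F}_J(H_*^k)$. The map $u_*^k$ respects these filtrations (sending $z_I \mapsto h_I$) and induces a map $\bar u_*^k : E_*(N_*^k) \to E_*(H_*^k) \cong H_*^k$. By \cite[Theorem 3.2]{JW}, $E_*(N_*^k)$ is free over $\BP_*/(v_0, \ldots, v_{k-1})$, so each filtration quotient $\mathcal{F}_J/\mathcal{F}_{J-1}$ is either zero or a free cyclic $\BP_*/(v_0, \ldots, v_{k-1})$-module generated by $z_J$. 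On such a summand, $\bar u_*^k$ is the canonical surjection to the $\F_p$-module generated by $h_J$, whose kernel is precisely the image of $R$ times the generator (since $\BP_*/(v_0, \ldots, v_{k-1})$ surjects onto $\F_p = \BP_*/(v_0, v_1, \ldots)$ with kernel the image of $R$). Summing over $J$, this gives $\ker \bar u_*^k = R \cdot E_*(N_*^k)$.

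To lift this equality back to $N_*^k$, I would induct on the filtration index $J$ in the lex order, working one total degree at a time so that only finitely many multi-indices $I$ contribute (making the lex order a well-order on the relevant finite set). Given $z \in \ker u_*^k \cap \mathcal{F}_J$, the associated-graded analysis shows its class in $\mathcal{F}_J/\mathcal{F}_{J-1}$ has the form $r \cdot z_J$ for some $r \in R$; since $r \cdot z_J \in R \cdot N_*^k \subseteq \ker u_*^k$ by the first inclusion, the difference $z - r z_J$ lies in $\ker u_*^k \cap \mathcal{F}_{J-1}$, and by the inductive hypothesis belongs to $R \cdot N_*^k$. Hence so does $z$.

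The main obstacle is mild: the lex order on $\N^k$ is not a well-order for $k \geq 2$, so the induction must be organized degree by degree. All the genuine content is packaged into the JW freeness theorem on the associated graded, which does the heavy lifting; the rest is bookkeeping via the filtration.
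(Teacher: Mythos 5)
Your proof is correct and follows essentially the same route as the paper: both establish $R \cdot N_*^k \subseteq \ker u_*^k$ directly, then pass to the associated graded modules of the lexicographic filtration, invoke the freeness of $E_*(N_*^k)$ over $\BP_*/(v_0,\ldots,v_{k-1})$ from \cite[Theorem 3.2]{JW}, and lift back by induction on the filtration index. Your added remarks (spelling out why $R$ annihilates $H_*^k$, and organizing the induction degree by degree so the lex order on the finitely many relevant multi-indices is a well-order) are reasonable clarifications of points the paper leaves implicit, not a different argument.
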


\begin{proof} It is clear that $R \cdot N_*^k$ is contained in the kernel of $u_*^k$. 

For the reverse inclusion we observe that the  map $u_*^k$ induces a canonical map 
\[
    \omega : E_*(N_*^k) \to E_*(H_*^k) .
\]
Let $c \in \ker u_*^k$ 
and let $J$ be minimal with $c \in \mathcal{F}_J(N_*^k)$. Then the class $[c] \in (E_J)_*(N_*^k)$ 
lies in the kernel of $\omega$. Because $(E_J)_*(N_*^k)$ is free over $\BP_* / (v_0, \ldots, v_{k-1})$ 
with basis $([z_I])$ and $(E_J)_*(H_*^k)$ is free over $\F_p = \BP_*/(v_0, v_1, \ldots)$ with basis 
$( [h_I])$ 
we get
\[
   c \in R \cdot N_*^k + \mathcal{F}_{J-1}(N_*^k).
\]
As  $R \cdot N_*^k \subset \ker u_*^k$, the assertion follows 
by induction on the filtration degree $J$. 
\end{proof} 

\begin{prop} \label{surj} 

Assume that $I$ is an index  set and that 
\[
    \phi : \bigoplus_I N_*^k \to \bigoplus_I  N_*^k 
\]
is a (not necessarily grading preserving) $\BP_*$-linear map which induces a surjection
\[
           \bigoplus_I H_*^k  \to \bigoplus_I H_*^k 
\]
after dividing out $R \cdot  \bigoplus_I N_*^k$. 
Then $\phi$ itself is surjective. 
\end{prop}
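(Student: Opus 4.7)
The statement is a Nakayama-type lemma, and the plan is to recast it as a vanishing statement about the cokernel of $\phi$. Setting $Q := \bigoplus_I N_*^k$, the $\BP_*$-linearity of $\phi$ makes $\im \phi$ a $\BP_*$-submodule of $Q$, and the hypothesis that the induced map on $\bigoplus_I H_*^k = Q/RQ$ is surjective translates to $Q = \im \phi + R Q$. Hence the cokernel $M := Q/\im \phi$ satisfies $M = RM$, and the goal reduces to showing $M = 0$.

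To establish $M = 0$, I would exploit the $\BP_*$-submodule filtration $\mathcal{F}_J N_*^k = \langle z_I : I \leq J\rangle_{\BP_*}$ from \cite[Theorem 3.2]{JW}, indexed by $J \in \N^k$ with the lex order, whose associated graded pieces $E_J N_*^k$ are free of rank one over the graded polynomial ring $\BP_*/(v_0, \ldots, v_{k-1}) = \F_p[v_k, v_{k+1}, \ldots]$, in which $R$ is precisely the irrelevant ideal. Extending termwise to $\mathcal{F}_J Q := \bigoplus_I \mathcal{F}_J N_*^k$ and projecting to $M$ gives an exhaustive filtration $F_J M := (\mathcal{F}_J Q + \im \phi)/\im \phi$ of $M$; the plan is to show $F_J M = 0$ by transfinite induction on $J$ (using well-ordering of lex).

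The inductive step would show that, assuming $F_{J'} M = 0$ for all $J' < J$, one has $F_J M = R \cdot F_J M$. Then $F_J M$, being a quotient of the free, bounded-below graded $\F_p[v_k, v_{k+1}, \ldots]$-module $\bigoplus_I E_J N_*^k$ that satisfies $F_J M = R \cdot F_J M$, would vanish by graded Nakayama (valid for bounded-below modules over a connected positively graded ring).

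The main obstacle is precisely the passage from $M = RM$ to the filtered version $F_J M = R \cdot F_J M$; this is delicate because $\phi$ is explicitly allowed \emph{not} to preserve the filtration $\mathcal{F}_\bullet$. A witnessing expression $m = \sum v_{\ell_i} n_i$ for $m \in RM$ may involve $n_i$ of higher filtration than $m$, and one must use the inductive vanishing of the lower filtration strata together with the $\BP_*$-linearity of $\phi$ to rearrange the expression so that the $n_i$ can be taken inside $\mathcal{F}_J Q$ modulo $\im \phi$; this is likely where the actual proof concentrates its technical effort, presumably invoking once more that $v_0, \ldots, v_{k-1}$ act by strictly lowering the filtration on $N_*^k$.
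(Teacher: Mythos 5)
Your reduction to a Nakayama-type statement is sound as far as it goes: passing to the cokernel $M = Q/\im\phi$ with $Q := \bigoplus_I N_*^k$ and observing $M = RM$ is a legitimate reformulation. But from there you choose the wrong filtration and, as you yourself acknowledge, cannot close the argument. The lexicographic filtration $\mathcal{F}_J$ of $N_*^k$ is the tool used to prove Lemma~\ref{kernel} (which identifies $\ker u_*^k$ with $R \cdot N_*^k$), not the tool for this proposition; it is not preserved by an arbitrary $\BP_*$-linear $\phi$, and the ``main obstacle'' you name is real and fatal to your approach as stated.

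The filtration the paper uses is the $R$-adic filtration $\mathcal{F}_j Q := R^j Q$. Because $R$ is an ideal of $\BP_*$, \emph{any} $\BP_*$-linear map automatically preserves $R^j Q$, so the obstacle you worried about simply does not arise. Moreover, $R = (v_k, v_{k+1}, \ldots)$ is generated by elements of positive degree and $Q$ is bounded below, so this filtration is finite in each degree. The hypothesis, combined with Lemma~\ref{kernel} (which gives $Q/RQ \cong \bigoplus_I H_*^k$), says that $\phi$ is surjective on the $j=0$ stage $E_0 = Q/RQ$; since $E_j = R^j Q / R^{j+1} Q$ is a quotient of $R^j \otimes_{\BP_*} E_0$ compatibly with $\phi$, surjectivity propagates to every $E_j$, and the finiteness of the filtration in each degree then lifts surjectivity of the associated graded map to surjectivity of $\phi$ itself. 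So the missing idea is: filter by powers of $R$, not by the lexicographic order on the generators $z_I$; the former is canonically $\phi$-equivariant while the latter is not.
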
 

\begin{proof} For $ j \geq 0$ consider the decreasing filtration 
\[
     \mathcal{F}_{j} \left(  \bigoplus_I N_*^k \right)  =  \bigoplus_I \mathcal{F}_j (N_*^k) 
\]
where 
\[
   \mathcal{F}_j(N_*^k) := R^j \cdot N_*^k  \subset N_*^k
\]
and $R^j = R \cdot \ldots \cdot R$ with $j$ factors. Note that in each single degree of $ \oplus_I N_*^k$ this filtration is 
finite. The associated graded $\BP_*$-modules is denoted $E_*(\oplus_I N_*^k)$. 

By Lemma \ref{kernel} and our assumption  the map $\phi$ induces a surjective map 
$E_*(\oplus_I N_*^k) \to E_*(\oplus_I N_*^k)$. 
From this the assertion of the proposition follows 
by a (finite in each degree)  induction on filtration degrees.

\end{proof} 


\section{Calculations of inessential Brown-Peterson homology} \label{compute} 

In this section we perform some explicit calculations that will be employed in the proof of 
Theorem \ref{magic}.
Throughout we assume that Theorems \ref{splitting} and  \ref{main} have been proven for $n-1$.

Let $k,\ell \geq 1$ with $k+\ell = n$.  Furthermore let $1 \leq \delta_1  \leq \cdots \leq \delta_{\ell} \leq  k$. We choose integers
\begin{eqnarray*}
   0 & \leq & \lambda_{1,1}, \ldots, \lambda_{1, \delta_1} < p \\
   0&  \leq & \lambda_{2,1}, \ldots, \lambda_{2, \delta_2} < p \\
   & \vdots & \\ 
   0 & \leq & \lambda_{\ell,1}, \ldots, \lambda_{\ell, \delta_{\ell}} < p 
\end{eqnarray*} 
and collect them to a vector $\Lambda= (\lambda_{1,1}, \ldots, \lambda_{1, \delta_1}, 
\ldots, \lambda_{\ell,1}, \ldots, \lambda_{\ell, \delta_\ell})$ of length $\delta_1 + \cdots + \delta_{\ell}$. 
For $(x_1, \ldots, x_k) \in (\Z/p)^k$ and $1 \leq j \leq \ell$ we set
\[
    y_{j} := \sum_{i=1}^{\delta_j} \lambda_{j,i} \cdot x_i \in \Z/p
\]
and use this to define the group homomorphism 
$\phi_{\Lambda}: (\Z/p)^k \to (\Z/p)^{k+\ell}$ by the formula
\[
    \phi_{\Lambda}(x_1, \ldots, x_k) := (x_1, \ldots, x_{\delta_1}, {\bf y_{1}}, x_{\delta_1+1}, \ldots, 
    x_{\delta_2}, {\bf y_{2}}, \ldots, {\bf y_{\ell}}, x_{\delta_{\ell}+1}, \ldots, x_{k}) . 
\]
We print $y_j$ in boldface to make the definition of $\phi_{\Lambda}$ more transparent: We just 
plug in $y_1, \ldots y_{\ell}$ at positions $\omega_j := \delta_{j} + j$ for $j = 1, \ldots, \ell$. 

Note that we get an induced map  $ \wedge^k B\Z/p \to \wedge^{k+\ell} B \Z/p$. In the following we study this map in $\BP$-theory. At first we prove a vanishing result.  

For each $1 \leq i \leq k+\ell$ we choose a space $X_i$ equal to $B\Z/p$ or $\CP^{\infty}$. 
Applying the maps $\pi: B \Z/p \to \CP^{\infty}$ at the positions $i$ with $X_i = \CP^{\infty}$ we obtain a map 
\[
    \gamma_{(X_1, \ldots, X_n)}  : \bigwedge_n B\Z/p  \to \bigwedge_{i=1}^{k + \ell} X_i 
\]
as before Theorem \ref{main}. 

We now  assume that we have at least one $X_i = \CP^{\infty}$. 
Then the number of $X_i = B\Z/p$ is at most $n-1$, and we get an 
induced map \label{seite} 
\begin{equation} \label{defntheta}
  \Theta_{\Lambda}  :  N_*^k  \to \widetilde{\BP}_*( \wedge^k \Z/p) \stackrel{(\phi_{\Lambda})_*}{\longrightarrow}  \widetilde{\BP}_*(\wedge^{k+\ell}  \Z/p) \to  \bigotimes_{i=1}^{k+\ell} \widetilde{\BP}_*(X_i)
\end{equation} 
using  Theorem \ref{splitting}. 

Now choose $1 \leq \alpha \leq  k+\ell = n$ and define 

\begin{itemize} 
   \item $r(\alpha)$ as the minimal index $1 \leq j \leq \ell$ with 
$\omega_j (= \delta_j + j) \geq \alpha$, 
   \item $n(\alpha)$ as  $\ell - r(\alpha) +1$, 
  \item $m(\alpha)$ as the number of indices 
$i \geq \alpha$ with  $X_i = \CP^{\infty}$.
\end{itemize} 
In other words, $n(\alpha)$ is is the number of components $y_j$ 
that appear  at a position at least $\alpha$ in $\phi_{\Lambda}(x_1, \ldots, x_k)$, and 
$m(\alpha)$ is  the number of 
$\CP^{\infty}$ at a position at least $\alpha$ in the product $\wedge_{i=1}^n X_i$. 

Recall that the map $\pi : B \Z/p \to \CP^{\infty}$ induces the zero map in reduced $\BP$-homology (for degree
reasons).  The following 
is a generalization of this fact.

\begin{prop} \label{vanish} If there is some $1 \leq \alpha \leq k+\ell$ with $m(\alpha) > n(\alpha)$, then the map $\Theta_{\Lambda}$ is equal to zero. 
\end{prop}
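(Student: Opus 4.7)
My plan is to establish Proposition~\ref{vanish} by induction on $n = k + \ell$, assuming Theorems~\ref{splitting} and~\ref{main} for all smaller values of $n$, and splitting into two cases depending on which $\alpha$ realizes the hypothesis $m(\alpha) > n(\alpha)$. The key technical tools are Lemma~\ref{squeeze} (in a mildly extended form) and the group-theoretic factorization of $\phi_\Lambda$ at position $\alpha$.

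\emph{Case A (condition at $\alpha=1$, i.e.\ $m(1)>\ell$).} Using iterated K\"unneth to pull out the $m(1)$ free factors $\widetilde{\BP}_*(\CP^\infty)$ from the target, and the inductive splitting $\Psi_{n-m(1)}$ applied to the residual $\widetilde{\BP}_*(\wedge^{n-m(1)} B\Z/p)$, the codomain of $\Theta_\Lambda$ is identified with $N_*^{n-m(1)} \otimes_{\BP_*} F$, with $F$ a free $\BP_*$-module. Since $k > n - m(1)$, an immediate extension of Lemma~\ref{squeeze}---picking a homogeneous $\BP_*$-basis of $F$ and decomposing the map into components $N_*^k \to N_*^{n-m(1)}$, each vanishing by Lemma~\ref{squeeze}---forces $\Theta_\Lambda = 0$.

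\emph{Case B (condition only at some $\alpha > 1$).} Note first that for any $\alpha \le \omega_1$ one has $n(\alpha) = \ell$, so the hypothesis at such $\alpha$ would imply $m(1)>\ell$ and place us in Case A; hence we may assume $\alpha > \omega_1$, whence $r(\alpha) \ge 2$ and the intermediate smash length $k + n(\alpha) = n - r(\alpha) + 1 \le n-1$, making the inductive splitting $\Psi_{k+n(\alpha)}$ available. Invoke the factorization $\phi_\Lambda = \sigma \circ \psi$ with
\[
\psi : (\Z/p)^k \to (\Z/p)^{k+n(\alpha)}, \quad x \mapsto \bigl(x, y_{r(\alpha)}(x), \ldots, y_\ell(x)\bigr),
\]
and $\sigma : (\Z/p)^{k+n(\alpha)} \to (\Z/p)^n$ placing the $a$- and $b$-coordinates into the pattern of $\phi_\Lambda$. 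The defining property of $\sigma$---that its restriction to output positions $\ge\alpha$ is a pure coordinate projection, with no mixing---is the structural tool that drives the argument. Both $\psi$ and $\sigma$ descend to pointed maps of smash products (the edge subcase $n(\alpha)=0$ places the $\CP^\infty$ at an $x$-slot beyond all $y$-slots, where a direct K\"unneth argument using $\pi_* = 0$ concludes).

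Applying $\Psi_{k+n(\alpha)}$ inductively to the middle term and a K\"unneth/splitting decomposition to the target, one shows that $\Theta_\Lambda$ factors through a map whose restriction at output positions $\ge\alpha$ is tensor-factor-wise componentwise---each component being either the identity or $\pi$, applied to a single source factor of $B(\Z/p)^{k+n(\alpha)}$. The counting $m(\alpha)>n(\alpha)$ then forces at least one $\pi$ to be applied to an $a$-coordinate factor: the $m(\alpha)$ projections $\pi$ cannot all be absorbed by the $n(\alpha) < m(\alpha)$ $b$-coordinate factors. The corresponding tensor component is the zero map $\pi_* : N_* \to \widetilde{\BP}_*(\CP^\infty)$ (by degree parity), annihilating the whole composition.

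The main obstacle will be the bookkeeping in Case B: the mixing of $\sigma$ at output positions $<\alpha$ obstructs an immediate tensor decomposition. However, this mixing only affects the $V = (\Z/p)^k$-factor, and the factorization $B\sigma = (B\sigma_< \times B\sigma_\ge)\circ (\Delta_V \times \id_W)$ through the diagonal $\Delta_V$ of $BV$ isolates it there; both the $W$-coordinates and the $V$-coordinates feeding into output positions $\ge\alpha$ do so factor-wise, compatibly with K\"unneth and the inductive $\Psi_{k+n(\alpha)}$, allowing the zero factor at positions $\ge\alpha$ to propagate to the full map.
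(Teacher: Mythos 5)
Your Case A (where the total number of $\CP^\infty$-factors exceeds $\ell$, i.e.\ $m(1)>n(1)$) is correct and matches the paper's argument: the codomain is then $N_*^{n-m(1)}\otimes_{\BP_*} F$ with $F$ free and $n-m(1)<k$, so the extension of Lemma~\ref{squeeze} to free-coefficient targets kills $\Theta_\Lambda$. This is exactly what the paper does — the paper makes no case distinction, but when $r(\alpha)=1$ its factorization $\phi_\Lambda=\phi_2\circ\phi_1$ degenerates to $\phi_2=\mathrm{id}$ and the argument collapses to Case~A.

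Your Case B, however, diverges from the paper and contains a genuine gap. The paper never tries to isolate a single tensor factor carrying $\pi_*\colon N_*\to\widetilde{\BP}_*(\CP^\infty)=0$; instead it assumes (WLOG) $X_i=B\Z/p$ for $i<\alpha$, uses the commuting square with $\chi$ to reduce $\Theta_\Lambda$ to the composite $N_*^k\to\widetilde{\BP}_*(\wedge^{k+n(\alpha)}\Z/p)\to\bigotimes\widetilde{\BP}_*(Y_i)$, and then applies the (extended) non-squeezing Lemma~\ref{squeeze} because the number of $Y_i=B\Z/p$ is $k+n(\alpha)-m(\alpha)<k$. Your substitute for this — "$\Theta_\Lambda$ factors through a map whose restriction at output positions $\ge\alpha$ is tensor-factor-wise componentwise" with one component $\pi_*=0$ — doesn't go through for two reasons. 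First, the proposed factorization through $\Psi_{k+n(\alpha)}$ is incorrect: the image of $\psi_*$ lies in the inessential submodule of $\widetilde{\BP}_*(\wedge^{k+n(\alpha)}\Z/p)$, which is precisely $\ker\Psi_{k+n(\alpha)}$, so if $\Theta_\Lambda$ factored through $\Psi_{k+n(\alpha)}$ it would vanish for \emph{every} $\Lambda$ and every choice of $X_i$, with no use of $m(\alpha)>n(\alpha)$ — a statement that is clearly false (it would also kill the nonzero Tor contributions). Second, even granting a smash-level splitting $\gamma_{(X_i)}\circ B\sigma = f\wedge\pi$ isolating the $V$-coordinate $x_{i_0}$, the composite $N_*^k\to\widetilde{\BP}_*(A\wedge B\Z/p)$ does not land in the image of the K\"unneth monomorphism $\widetilde{\BP}_*(A)\otimes_{\BP_*} N_*\hra\widetilde{\BP}_*(A\wedge B\Z/p)$: there is a $\Tor$-term, and $(\mathrm{id}_A\wedge\pi)_*$ is an isomorphism on it, so the "zero tensor factor $\pi_*$" argument only kills the K\"unneth part and not the rest. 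The diagonal factorization $(B\sigma_<\times B\sigma_\ge)\circ(\Delta_V\times\mathrm{id}_W)$ that you propose to handle the mixing at positions $<\alpha$ does not repair this; it moves the mixing around but does not convert the map into a tensor product of module maps on which a single vanishing factor would suffice. The non-squeezing lemma is precisely the device that bypasses all of this: it is a statement about arbitrary $\BP_*$-linear maps $N_*^k\to N_*^\ell\otimes F$, so it applies without needing any tensor-factorization of the map — which in this situation does not exist.
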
 

We have the following important consequence, where we equip the set  of $\ell$-tuples 
$1 \leq j_1 <  \cdots < j_{\ell} \leq n$ with the lexicographic order. 

\begin{cor} \label{conseqlex}  

Let the number of $X_i = \CP^{\infty}$ be equal to $\ell$ and  assume that 
\[
      (\omega_1, \ldots, \omega_{\ell}) < (j_1,\ldots,  j_{\ell})
\]
where $1 \leq j_1 < \cdots < j_{\ell} \leq n$ are those indices with $X_{j_i} = \CP^{\infty}$. 
Then the map $\Theta_{\Lambda}$ is equal to zero. 
\end{cor}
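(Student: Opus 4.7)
The plan is to reduce directly to Proposition~\ref{vanish} by producing a single threshold index $\alpha$ at which more $\CP^{\infty}$-factors remain than there are $y_j$-positions, i.e., $m(\alpha) > n(\alpha)$.

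First I would record two monotonicity facts. The positions $\omega_j = \delta_j + j$ are \emph{strictly} increasing in $j$, because $\delta_j$ is weakly increasing and the summand $j$ increases by $1$ at each step. The positions $j_1 < j_2 < \cdots < j_\ell$ are strictly increasing by the very indexing of the $\ell$-tuple. Both sequences have the same length $\ell$.

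Next, using the lexicographic hypothesis $(\omega_1, \ldots, \omega_\ell) < (j_1, \ldots, j_\ell)$, I pick the smallest index $s$ at which the two tuples disagree, so $\omega_i = j_i$ for $i < s$ and $\omega_s < j_s$. I then set $\alpha := \omega_s + 1$ and carry out the pigeonhole count. By strict monotonicity of the $\omega_i$, the set $\{\,i : \omega_i \geq \alpha\,\}$ is exactly $\{s+1, \ldots, \ell\}$, giving $n(\alpha) = \ell - s$. On the other hand, $j_s \geq \omega_s + 1 = \alpha$, and strict monotonicity of the $j_i$ then yields $j_s, j_{s+1}, \ldots, j_\ell \geq \alpha$, so $m(\alpha) \geq \ell - s + 1 > n(\alpha)$. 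Proposition~\ref{vanish} now forces $\Theta_\Lambda = 0$.

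I do not anticipate a genuine obstacle here; the argument is pure bookkeeping of positions once the correct $\alpha$ is identified. The only delicate point that I would double-check is using \emph{strict} (rather than merely weak) monotonicity of the $\omega_i$ when identifying $\{i : \omega_i \geq \alpha\}$ — which is precisely why the shift $\omega_j = \delta_j + j$, rather than just $\delta_j$, is the right quantity to track.
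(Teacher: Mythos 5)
Your argument is correct and is precisely the derivation the paper leaves implicit: the Corollary is stated as a "consequence" of Proposition~\ref{vanish} with no separate proof given, and your choice of $\alpha = \omega_s + 1$ at the first index $s$ of lexicographic disagreement is the natural way to produce the required inequality $m(\alpha) > n(\alpha)$. The bookkeeping checks out — strict monotonicity of $\omega_j = \delta_j + j$ gives $n(\alpha) = \ell - s$ exactly (with the convention $n(\alpha)=0$ when $s=\ell$, since then no $j$ satisfies $\omega_j \geq \alpha$), and $j_s \geq \alpha$ together with strict monotonicity of the $j_t$ gives $m(\alpha) \geq \ell - s + 1$; one also has $\alpha \leq j_s \leq n$, so $\alpha$ lies in the admissible range. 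Your emphasis on why $\omega_j = \delta_j + j$ rather than $\delta_j$ is the right quantity (to get strict rather than weak monotonicity) is exactly the point that makes the pigeonhole clean.
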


\begin{proof}[Proof of Proposition \ref{vanish}] We assume that $X_i = B \Z/p$ for all $i < \alpha$, which is no loss of generality, because the maps $B \Z/p \to X_i = \CP^{\infty}$ with $i < \alpha$ can 
be applied later. 

We have
\[
     \phi_{\Lambda} = \phi_2 \circ \phi_1
\]
where 
\begin{eqnarray*}
  \phi_1:   (\Z/p)^{k} & \to & (\Z/p)^{k+n(\alpha)} \\
     (x_1, \ldots, x_k) & \mapsto & (x_1, \ldots, x_{\delta_{r(\alpha)}}, {\bf y}_{r(\alpha)}, \ldots, {\bf y_{\ell}}, x_{\delta_{\ell}+1}, \ldots, x_k) 
 \end{eqnarray*} 
and
\begin{eqnarray*} 
    \phi_2 : (\Z/p)^{k+n(\alpha)} & \to & (\Z/p)^{k +\ell} \\
      (x_1, \ldots, x_{k+n(\alpha)} ) & \mapsto & (x_1, \ldots, x_{\delta_1}, {\bf y}_{1}, \ldots, {\bf y}_{r(\alpha)-1}, x_{\delta_{r(\alpha)-1}+1} , \ldots, x_{k+n(\alpha)}).
\end{eqnarray*}
In other words: At first we plug in only $y_{r(\alpha)}, \ldots, y_{\ell}$, and then, in a second step, 
the remaining $y_1, \ldots, y_{r(\alpha)-1}$. 

For $1 \leq i \leq k+n(\alpha)$ we define
\[
    Y_i := \begin{cases} B\Z/p {\rm~for~} i \leq  \alpha - r(\alpha), \\
                 X_{i + r(\alpha)-1}  {\rm~for~} i > \alpha - r(\alpha). \end{cases}
\]
We then have a commutative diagram 
\[
   \xymatrix{
      \bigwedge^{k+n(\alpha)} B\Z/p \ar[r]  \ar[d]^{B\phi_2} & \bigwedge_{i=1}^{k+n(\alpha)} Y_i  \ar[d]^{\chi}\\
      \bigwedge^{k+\ell} B\Z/p                \ar[r]^{\gamma_{(X_i)}}                            & \bigwedge_{i=1}^n  X_i.
 } 
\]
Here the map $\chi$ is induced by 
\[
   \phi_2|_{(\Z/p)^{\alpha - r(\alpha) } \times 1} : (\Z/p)^{\alpha - r(\alpha)} \to (\Z/p)^{\alpha -1} .
\]
The assumption that $X_i = B \Z/p$ for all $i < \alpha$ is used here. 

It is hence  enough to prove that the composition 
\[
    N_*^k \to \widetilde{\BP}_*(\wedge^k \Z/p) \stackrel{(\phi_1)_*}{\longrightarrow} 
                 \widetilde{\BP}_*(\wedge^{k+n(\alpha)} \Z/p) \longrightarrow \bigotimes_{i=1}^{k+n(\alpha)} 
                 \widetilde{\BP}_*(Y_i)
\]
is equal to zero. But this holds by the non-squeezing Lemma \ref{squeeze}, because the number of indices $i = 1, \ldots, 
k+n(\alpha)$ with $Y_i = B \Z/p$ is equal to $k + n(\alpha) - m(\alpha) < k$ and $\widetilde{\BP}_*(\CP^{\infty})$ is a free $\BP_*$-module.

\end{proof} 

In the following we consider the maps  $\Theta_{\Lambda}$  for the special case $k = n-1$, 
 $\ell = 1$ and $\delta_1 = k$. 
 This means we are given $0 \leq \lambda_1, \ldots, \lambda_{k}  < p $ and a group homomorphism
\begin{eqnarray*} 
 \phi =  \phi_{(\lambda_1, \ldots, \lambda_{k})} :  (\Z/p)^{k} & \to & (\Z/p)^{k+1}  \\
         (x_1, \ldots, x_{k}) & \mapsto & (x_1, \ldots, x_{k}, \lambda_1 x_1 + \cdots + \lambda_{k} x_{k}) . 
\end{eqnarray*} 
We obtain induced maps 
\begin{eqnarray*} 
    \phi_* : N_*^{k} \to \widetilde{\BP}_*( \wedge^{k} \Z/p) & \to &  \widetilde{\BP}_* (  \wedge^{k+1} \Z/p), \\
    \phi_* : H_*^{k} \to \widetilde{\HH}_* ( \wedge^{k}  \Z/p) & \to & \widetilde{\HH}_*( \wedge^{k+1} \Z/p), 
\end{eqnarray*} 
and by composition with the map
\[
   \gamma_{(B\Z/p, \ldots, B\Z/p, \CP^{\infty})} : \bigwedge_{k+1} B\Z/p \to \bigwedge_{k}B \Z/p \wedge \CP^{\infty}
\]
we obtain the maps
\begin{eqnarray*} 
   \Theta_{(\lambda_1, \ldots, \lambda_k)} :    N_{*}^{k} \stackrel{\phi_*}{\to}   \widetilde{\BP}_* ( \wedge^{k+1} \Z/p) \to 
    \widetilde{\BP}_*(\wedge^{k} \Z/p \wedge \CP^{\infty}) \stackrel{\Psi_{k} \otimes \id}{\longrightarrow}  N_*^{k} \otimes_{\BP_*} 
    \widetilde{\BP}_*(\CP^{\infty}) \\
  \Theta_{(\lambda_1, \ldots, \lambda_k)}:  H_*^k \stackrel{\phi_*}{\to}  \widetilde{\HH}_*( \wedge^{k+1} \Z/p)  \to 
    \widetilde{\HH}_*(\wedge^{k} \Z/p \wedge \CP^{\infty}) \stackrel{\Psi_{k} \otimes \id}{\longrightarrow}  H_*^{k} \otimes 
    \widetilde{\HH}_*(\CP^{\infty}).
\end{eqnarray*} 
Here we use the splittings of the iterated K\"unneth map for $\BP_*$-theory (see  Theorem \ref{splitting}) 
and ordinary homology (recall that $k = n-1 < n$).

The generators $\beta_{m} \in \widetilde{\BP}_{2m} ( \CP^{\infty})$ induce generators of  $\HH_{2m}(\CP^{\infty})$ 
which we denote by the same symbol. Similarly to $(L_k)_*$ let 
  $(M_k)_*$ be the free graded $\F_p$-module on generators ${y}_{m}$ of degree $2m$, $0 <  m < p^k$. In 
addition to the canonical projection $\widetilde{\BP}_*(\CP^{\infty})  \to  (L_k)_*$ 
we  obtain a canonical projection 
\[
        \widetilde{\HH}_* ( \CP^{\infty})  \to  (M_k)_* .
\]
by sending $\beta_{m} \mapsto y_{m}$ similar as before. 

We now consider the compositions 
\begin{eqnarray*} 
   \Theta_{(\lambda_1, \ldots, \lambda_k)}:   N_*^k \stackrel{\Theta_{(\lambda_1, \ldots, \lambda_k)}}{\longrightarrow}  N_*^{k} \otimes_{\BP_*} \widetilde{\BP}_*( \CP^{\infty}) \to N_*^{k} \otimes_{\BP_*} L_{k} \\
   \Theta_{(\lambda_1, \ldots, \lambda_k)}:   H_*^k \stackrel{\Theta_{(\lambda_1, \ldots, \lambda_k)}}{\longrightarrow} H_*^{k} \otimes \widetilde{\HH}_*( \CP^{\infty}) \to H_*^{k} \otimes M_{k} 
\end{eqnarray*} 
for various $0 \leq \lambda_1 , \ldots, \lambda_{k} < p$. 
The first map is graded $\BP_*$-linear and the second map is graded $\F_p$-linear. 
Both maps are compatible with the orientation $\BP \to \HH$. 

In the following we use the indexing set
\[
    \Lambda_k := \{ \Lambda \in \{0, \ldots, p-1\}^k ~ | ~ (\lambda_1, \ldots, \lambda_k) \neq (0, \ldots, 0) \}. 
\]
The next calculation is central for the  present paper. 

\begin{prop} \label{homcomp} The map 
\begin{eqnarray*} 
    \bigoplus_{\Lambda \in \Lambda_k} H_*^k &  \to & H_*^k \otimes  M_k \\
    (x_{\Lambda}) & \mapsto & \sum_{\Lambda \in \Lambda_k} \Theta_{\Lambda}(x_{\Lambda} ) 
\end{eqnarray*} 
is surjective. 
\end{prop}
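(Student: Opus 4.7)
My plan is to prove the Proposition by first computing $\Theta_\Lambda$ explicitly on the canonical basis $\{h_I\}_{I\in\N^k}$ of $H_*^k$, and then reducing surjectivity, via duality, to a polynomial identity that is settled by a Vandermonde-style count of roots.

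The first step is to write $\Theta_\Lambda$ as the composition
\[
H_*^k \;\xrightarrow{\Delta_*}\; H_*^k\otimes_{\F_p}H_*^k \;\xrightarrow{\mathrm{id}\otimes (\mu_\Lambda)_*}\; H_*^k\otimes \widetilde{\HH}_*(\CP^{\infty}) \;\lra\; H_*^k\otimes M_k,
\]
where $\mu_\Lambda=\pi\circ(\lambda_1+\cdots+\lambda_k): B(\Z/p)^k\to \CP^{\infty}$. Every relevant module is $p$-torsion, so one may work throughout with $\F_p$-coefficients. Using the standard Hopf algebra $\HH_*(B\Z/p;\F_p)=\Lambda[a]\otimes \F_p[b]$ --- with $h_m$ corresponding to $ab_m$ and $\Delta_*(ab_m)=\sum_{p+q=m}(ab_p\otimes b_q+b_p\otimes ab_q)$ --- together with the fact that $(\mu_\Lambda)^*(c)=\lambda_1\beta_1+\cdots+\lambda_k\beta_k$ carries no $\alpha$-factor (so $(\mu_\Lambda)_*$ annihilates every monomial involving an $a$), a direct expansion of the $k$-fold diagonal yields, after retaining only the single branch placing all $a$'s on the left (the Koszul signs trivialize because every surviving right factor has even degree),
\[
\Theta_\Lambda(h_I) \;=\; \sum_{\substack{0\leq E\leq I\\ 1\leq|E|<p^k}}\binom{|E|}{E}\,\lambda^E\, h_{I-E}\otimes y_{|E|},
\]
with $\lambda^E:=\lambda_1^{e_1}\cdots\lambda_k^{e_k}$.

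Surjectivity of the map in the Proposition is equivalent to the injectivity of its adjoint. I identify $(H_*^k)^*$ with the power series ring $\F_p[[t_1,\ldots,t_k]]$ via $h_I^*\leftrightarrow t^I$; then any functional on $H_*^k\otimes M_k$ corresponds to a tuple $(\Phi_1,\ldots,\Phi_{p^k-1})$ of power series. A short computation from the formula above shows that the adjoint sends such a tuple to
$\bigl(\sum_{r=1}^{p^k-1} \ell_\Lambda(t)^r \Phi_r(t)\bigr)_{\Lambda\in\Lambda_k}$,
where $\ell_\Lambda(t):=\lambda_1 t_1+\cdots+\lambda_k t_k$. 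The claim therefore reduces to: \emph{if $\sum_{r=1}^{p^k-1}\ell_\Lambda^r\,\Phi_r=0$ in $\F_p[[t]]$ for every $\Lambda\in\Lambda_k$, then $\Phi_1=\cdots=\Phi_{p^k-1}=0$.}

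The final step is purely algebraic. Consider $\psi(u,t):=\sum_{r=1}^{p^k-1}\Phi_r(t)\,u^r$ as a polynomial in $u$ of degree at most $p^k-1$ over the integral domain $\F_p[[t]]$. The hypothesis says $\psi(\ell_\Lambda(t),t)=0$ for every $\Lambda\in\Lambda_k$, and the $p^k-1$ power series $\ell_\Lambda$ are pairwise distinct (the map $\Lambda\mapsto\ell_\Lambda$ is injective), so $\psi$ has $p^k-1$ distinct roots. Either $\psi=0$ (as desired), or $\psi=c(t)\prod_{\Lambda\in\Lambda_k}(u-\ell_\Lambda(t))$ with $c\neq 0$; in the latter case the $u^0$-coefficient of $\psi$ equals $\pm c(t)\prod_\Lambda\ell_\Lambda(t)$, and $\prod_\Lambda\ell_\Lambda(t)$ is the classical nonzero top Dickson invariant --- a product of nonzero linear forms in an integral domain. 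This contradicts the fact that $\psi$ has no $u^0$ term, forcing $\psi=0$ and completing the proof. The main obstacle is the first step: making the formula for $\Theta_\Lambda$ precise requires tracking the splitting $\Psi_k$, the diagonal on the full product $B(\Z/p)^k$, and the K\"unneth/smash-product identifications, but once that formula is in hand the remainder is a clean polynomial-degree count.
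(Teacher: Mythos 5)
Your proof is correct and follows essentially the same strategy as the paper: one computes the effect of $\Theta_\Lambda$ explicitly in $\F_p$-(co)homology, dualizes (the paper works directly in cohomology), and reduces surjectivity to a Vandermonde-type nonvanishing over the polynomial ring $\F_p[t_1,\ldots,t_k]$. The paper packages the last step as linear independence of the columns of a $p^k\times p^k$ Vandermonde matrix with entries $\ell_\Lambda^\nu$, while you recast it as a root-count bound for a one-variable polynomial over $\F_p[[t_1,\ldots,t_k]]$; these are the same underlying fact, and the roles played by the row $\Lambda=(0,\ldots,0)$ in the paper and by the missing $u^0$-term in your argument correspond exactly.
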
 

\begin{proof} We work in unreduced cohomology with $\F_p$-coefficients.  Identifying
\[
    \HH^*((\Z/p)^k ;\F_p) \cong  \F_p[t_1, \ldots, t_k] \otimes \Lambda(s_1, \ldots, s_k) \text{ and } 
   \HH^*(\CP^{\infty};\F_p) \cong \F_p[t]
\]
where $t_1, \ldots, t_k, t$ are indeterminates of degree $2$ and $s_1, \ldots, s_k$ indeterminates of degree $1$, 
the map induced in $\F_p$-cohomology  by 
\[
    B (\Z/p)^k \stackrel{\phi_{(\lambda_1, \ldots, \lambda_k)} }{\longrightarrow}  B(\Z/p)^{k+1}  
    \stackrel{\id \times \pi}{\longrightarrow} B(\Z/p)^k \times \CP^{\infty} 
\]
satisfies
\begin{equation} \label{firsteq} 
    (  t_1^{m_1} s_1 \cdot  \ldots \cdot  t_k^{m_k} s_k ) \cdot t^{\nu} \mapsto (t_1^{m_1} s_1 \cdot \ldots \cdot t_k^{m_k} s_k) \cdot (\lambda_1 t_1 + \ldots + \lambda_k t_k)^{\nu} .
\end{equation} 
The $(p^k \times p^k)$-Vandermonde-matrix  
 \[ 
    X :=   \left( \begin{array}{cccc} 1 & (\lambda_1 t_1 + \cdots + \lambda_k t_k) & \cdots & (\lambda_1 t_1 + \cdots + \lambda_k t_k)^{p^k-1} \end{array} \right)_{ 0 \leq \lambda_1, \ldots , \lambda_k < p}
 \]
 (where the subscript parametrizes the rows) with entries in $\F_p[t_1, \ldots, t_k]$ has  determinant 
 \[ 
     \prod_{(\lambda_1, \ldots, \lambda_k) < (\mu_1, \ldots, \mu_k)} ((\lambda_1 - \mu_1)t_1 + \cdots + (\lambda_k - \mu_k)t_k) \neq 0,
 \]
 where we use the lexicographic order in the index set. Hence the column vectors of $X$ are linearly independent over 
 $\F_p[t_1, \ldots, t_k]$. 
  
In view of formula \eqref{firsteq}  this means that the map 
\[
\bigoplus_{ 0 \leq \lambda_1, \ldots, \lambda_k < p}  \phi_{(\lambda_1, \ldots, \lambda_k)}^* :  \bigotimes_k \HH^{odd}(\Z/p; \F_p)  \otimes \HH^{0 \leq 2m < 2p^k} (\CP^{\infty};\F_p) \to 
\bigoplus_{0 \leq \lambda_1, \ldots, \lambda_k < p} \bigotimes_k \HH^{odd}(\Z/p; \F_p) 
\]
is injective.  Dualizing this statement over $\F_p$ and using the identification 
$H_* = \HH_{odd}(\Z/p;\F_p)$ we conclude that the map 
\begin{eqnarray*} 
    \bigoplus_{0 \leq \lambda_1, \ldots, \lambda_k < p} H_*^k &  \to & H_*^k \otimes {\rm span}_{\F_p} 
    \{  \beta_0, \beta_1, \ldots, 
    \beta_{p^k-1} \} \\
    (x_{\Lambda}) & \mapsto & \sum_{\Lambda \in \{0, \ldots, p-1\}^{k}} \Theta_{\Lambda}(x_{\Lambda}) 
\end{eqnarray*} 
is  surjective. This implies our claim, because the component $(\lambda_1, \ldots, \lambda_k) = 
(0, \ldots, 0)$ maps isomorphically to $H_*^k \otimes \beta_0 $. 
\end{proof}

Together with Proposition \ref{surj} (where $I$ is an index set with $p^k -1$ elements) this implies

\begin{prop} \label{core} The $\BP_*$-linear map 
\begin{eqnarray*} 
    \bigoplus_{\Lambda_k } N_*^k &  \to & N_*^k \otimes_{\BP_*}  L_k \\
    (x_{\Lambda}) & \mapsto & \sum_{\Lambda \in \Lambda_k} \Theta_{\Lambda}(x_{\Lambda}) 
\end{eqnarray*} 
is surjective. 
\end{prop}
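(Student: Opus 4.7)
The plan is to deduce this proposition from the analogous $\F_p$-homology surjection in Proposition \ref{homcomp} by a lifting argument along the orientation $u_* : \BP \to \HH$. More precisely, I will rewrite source and target as direct sums of copies of $N_*^k$, check that reduction modulo the ideal $R = (v_k, v_{k+1}, \ldots)$ recovers the singular homology map of Proposition \ref{homcomp}, and then invoke Proposition \ref{surj} to conclude.

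For the first step, I would observe that since $L_k$ is by definition the free graded $\BP_*$-module on the $p^k-1$ generators $y_1, \ldots, y_{p^k-1}$, the target is canonically $N_*^k \otimes_{\BP_*} L_k \cong \bigoplus_{m=1}^{p^k-1} N_*^k$, a direct sum of $p^k - 1$ grading-shifted copies of $N_*^k$. The source is also a direct sum of $p^k - 1$ copies of $N_*^k$, since $|\Lambda_k| = p^k - 1$. After choosing any bijection of indexing sets, both sides take the shape $\bigoplus_I N_*^k$ required by Proposition \ref{surj}.

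For the second step, by Lemma \ref{kernel} reduction modulo $R$ takes $N_*^k$ to $H_*^k$; since $L_k$ is $\BP_*$-free on generators in the same degrees as those of $M_k$, the target reduces to $H_*^k \otimes_{\F_p} M_k$. As the paper already records just before Proposition \ref{homcomp}, each $\Theta_\Lambda$ in $\BP$-theory is intertwined under $u_*$ with its $\F_p$-homology counterpart. Hence the induced map on the reductions modulo $R$ is precisely the map of Proposition \ref{homcomp}, which is surjective.

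The conclusion of Proposition \ref{core} then follows immediately from Proposition \ref{surj}. I do not expect any substantial obstacle here: the real work has already been carried out in Proposition \ref{homcomp} (the Vandermonde determinant computation in $\F_p$-cohomology) and in Proposition \ref{surj} (the filtration argument built on Lemma \ref{kernel}), and the present step is essentially a bookkeeping assembly, the only point requiring care being the identification of $N_*^k \otimes_{\BP_*} L_k$ with a direct sum $\bigoplus_I N_*^k$, which is warranted by the freeness of $L_k$.
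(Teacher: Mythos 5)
Your argument is correct and is essentially the paper's own: the paper simply states that Proposition~\ref{homcomp} together with Proposition~\ref{surj} (with $I$ of cardinality $p^k-1$) yields Proposition~\ref{core}, and what you have written out is exactly the bookkeeping behind that one-line deduction — the identification $N_*^k \otimes_{\BP_*} L_k \cong \bigoplus_{m=1}^{p^k-1} N_*^k$ via freeness of $L_k$, the identification of the mod-$R$ reduction with the singular homology map via Lemma~\ref{kernel} and the compatibility of the $\Theta_\Lambda$'s with the orientation $u_*$, and the final appeal to Proposition~\ref{surj}. No gaps; this is the intended proof, made explicit.
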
 

As a useful corollary we have (cf.  \cite[Theorem 4.1]{JW}) 

\begin{cor} \label{wonderful} The composition 
\[
   (\Tor_{\BP_*}(N_*^k, N_*))_{*-1} \subset N_*^k \otimes_{\BP_*} F_1 = N_*^k \otimes_{\BP_*} \widetilde{\BP}_*
   (\CP^{\infty}) \to N_*^k \otimes_{\BP_*} L_k .
\]
is a $\BP_*$-linear  isomorphism. 
\end{cor}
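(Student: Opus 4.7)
The plan is to establish surjectivity and injectivity of the composition separately.

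For surjectivity, I would first observe that each of the maps $\Theta_\Lambda : N_*^k \to N_*^k \otimes_{\BP_*} \widetilde{\BP}_*(\CP^\infty)$ introduced in Section \ref{compute} factors through the Tor subgroup $(\Tor_{\BP_*}(N_*^k,N_*))_{*-1} = \ker(\id \otimes_{\BP_*} f_1)$. Indeed, $\Theta_\Lambda$ involves the map $\widetilde{\BP}_*(\wedge^{k+1}\Z/p) \to \widetilde{\BP}_*(\wedge^k \Z/p \wedge \CP^\infty)$ induced by $\id \wedge \pi$, and the identification of the stable cofibre sequence $B\Z/p \stackrel{\pi}{\to} \CP^\infty \to C$ with the K\"unneth exact sequence (recalled in Section \ref{back}) shows that the image of this map already lies in $\ker(\id \otimes f_1) \subset \widetilde{\BP}_*(\wedge^k \Z/p) \otimes_{\BP_*} F_1$. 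Since $\Psi_k \otimes \id$ commutes with $\id \otimes f_1$, this property is preserved. Combining with Proposition \ref{core}, which states that $\sum_\Lambda \Theta_\Lambda : \bigoplus_{\Lambda_k} N_*^k \to N_*^k \otimes_{\BP_*} L_k$ is surjective, immediately yields surjectivity of $(\Tor_{\BP_*}(N_*^k,N_*))_{*-1} \to N_*^k \otimes_{\BP_*} L_k$.

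For injectivity it suffices to show that $\ker(\id \otimes f_1) \cap (N_*^k \otimes_{\BP_*} F_1^{\geq p^k}) = 0$, where $F_1^{\geq p^k} \subset F_1$ denotes the free $\BP_*$-submodule on $\{y_m : m \geq p^k\}$. Given $c = \sum_{m \geq p^k} c_m \otimes y_m$ in this intersection, the kernel condition unfolds to the relations $\sum_{i \geq 0} a_i c_{j+i} = 0$ in $N_*^k$ for each $j \geq 1$. The strategy is to pass to the associated graded $E_*(N_*^k)$ of Section \ref{prelim}, which is free over $\BP_*/(v_0, \ldots, v_{k-1})$ with $v_k$ acting freely. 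Exploiting the congruence $a_{p^k - 1} \equiv v_k$ modulo $(v_0, \ldots, v_{k-1})$, the relation for $j = 1$ takes the form $v_k \cdot c_{p^k} + (\text{tail involving } c_m \text{ for } m > p^k) = 0$ in $E_*(N_*^k)$, and a descending induction on $m$ combined with an induction on filtration level then pins down $c_m = 0$ for every $m$.

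The main obstacle is the injectivity argument, which requires careful bookkeeping of the contributions of the $a_i$'s modulo $(v_0, \ldots, v_{k-1})$ when propagating the kernel relations through the filtration of $N_*^k$. A more conceptual alternative bypasses this: the surjection of Proposition \ref{core} is between two $\BP_*$-modules each a sum of $p^k - 1$ copies of $N_*^k$ (up to grading shifts), and combining Proposition \ref{homcomp} with Proposition \ref{surj} allows us to promote the singular-homology bijection back to $\BP$-theory, thereby forcing the Tor map to be injective.
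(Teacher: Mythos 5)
Your surjectivity argument is essentially the paper's: you observe that each $\Theta_\Lambda$ factors through $(\Tor_{\BP_*}(N_*^k,N_*))_{*-1}$ via the cofibre sequence for $\pi$, and then invoke Proposition~\ref{core}. This matches the paper.

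The injectivity half has a genuine gap, and in fact the paper does not argue injectivity directly at all. The paper instead cites the algebraic Conner--Floyd conjecture (via \cite[Proof of Theorem~4.1]{JW}) to conclude that $(\Tor_{\BP_*}(N_*^k,N_*))_{m}$ and $(N_*^k\otimes_{\BP_*}L_k)_{m}$ are \emph{finite} abelian groups of the \emph{same} cardinality in each degree $m$; a surjective homomorphism between finite groups of equal cardinality is automatically an isomorphism. Your first route --- working in $E_*(N_*^k)$ and using $a_{p^k-1}\equiv v_k\bmod(v_0,\ldots,v_{k-1})$ --- runs into the problem that the ``tail'' $\sum_{m>p^k} a_{m-1}c_m$ is not controllable modulo $(v_0,\ldots,v_{k-1})$: the coefficients $a_i$ with $i\geq p^k$ are not in that ideal (e.g.\ $a_{p^{k+1}-1}\equiv v_{k+1}$, which acts freely on $E_*(N_*^k)$), and likewise the intermediate $a_i$ with $i<p^k-1$ that appear when you vary $j$ are not in it either. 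Making this bookkeeping rigorous amounts to re-deriving the computation of $\Tor_{\BP_*}(N_*^k,N_*)$ from Johnson--Wilson, which is precisely what the citation accomplishes. Your second, ``more conceptual'' route does not work as stated: Propositions~\ref{homcomp} and~\ref{surj} assert only \emph{surjectivity} in homology and only \emph{promote surjectivity} to $\BP$-theory; neither gives a bijection. Moreover, the source $\bigoplus_{\Lambda_k}N_*^k$ and target $N_*^k\otimes_{\BP_*}L_k$ of Proposition~\ref{core} are not even abstractly isomorphic as graded modules (the target carries the even grading shifts from $L_k$), so a surjectivity-implies-injectivity argument is not available. To close the gap you need the cardinality comparison from the algebraic Conner--Floyd conjecture, exactly as in the paper.
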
 

\begin{proof} The algebraic Conner-Floyd conjecture implies that the left and right hand sides have the same cardinalities in each degree, see \cite[Proof of Theorem 4.1]{JW}.  
By construction, for each $\Lambda \in \Lambda_k$, the map $\Theta_{\Lambda}$ factors as 
\begin{align*}
 N_*^k \stackrel{\phi_*}{\longrightarrow}    \widetilde{\BP}_*(\wedge^{k+1} \Z/p) \to \big(\Tor_{\BP_*}(\widetilde{\BP}_*(\wedge^{k} \Z/p), N_*)\big)_{*-1} 
    \stackrel{\Tor_{\BP_*}(\Psi_{k}, \id)}{\longrightarrow} \\ 
 \to    (\Tor_{\BP_*}(N_*^k, N_*))_{*-1}  \subset N_*^k \otimes_{\BP_*} F_1 \to N_*^k \otimes_{\BP_*}  L_k .
\end{align*}
Hence, by Proposition \ref{core}, the module $(\Tor_{\BP_*}(N_*^k, N_*))_{*-1}$ has at least as many elements as 
$N_*^k \otimes_{\BP_*}  L_k$ (in each degree). This completes the proof. 
\end{proof} 

We remark that this proof of Corollary \ref{wonderful} is different from the proof of \cite[Theorem 4.1]{JW}. 

Now we  return to the situation at the beginning of this section for a fixed choice of   $ 1 \leq \delta_1 \leq 
\cdots \leq \delta_{\ell} \leq k$,  and set
\[
    X_i := \begin{cases} B \Z/p , {\rm~if~} i \neq \omega_j  {\rm~for~all~} 1 \leq j \leq \ell, \\
                                         \CP^{\infty}, {\rm~if~} i = \omega_j {\rm~for~some~} 1 \leq j \leq \ell.
                                         \end{cases}
\]
This means that the components $X_i =  \CP^{\infty}$ exactly match the $y_1, \ldots, y_{\ell}$ in 
$\phi_{\Lambda}(x_1, \ldots, x_k)$. 

For each choice of $\Lambda   \in \{0,\ldots, p-1\} ^{ \delta_1 + \cdots + \delta_{\ell}}$ 
we compose the map 
\[
   \Theta_{\Lambda} : N_*^k  \to   \bigotimes_{i=1}^{k+\ell} \widetilde{\BP}_*(X_i)
\]
with projections $\widetilde{\BP}_*(\CP^{\infty}) \to L_{\delta_j}$ to obtain a map 
\[
  \Theta_{\Lambda } : N_*^k \to N_*^{\delta_1} \otimes L_{\delta_1} \otimes N_*^{\delta_2 - \delta_1} \otimes
    L_{\delta_2} \otimes \cdots \otimes N_*^{\delta_l  - \delta_{l-1}} \otimes L_{\delta_l} \otimes N_*^{k - \delta_l}.
\]
In the following we use the indexing set 
\[
   \Lambda_{\delta_1, \ldots, \delta_{\ell}} := \{ \Lambda \in \{0, \ldots, p-1\}^{ \delta_1 + \cdots + \delta_{\ell}}~  |~  (\lambda_{j,1}, \ldots, \lambda_{j, \delta_j}) \neq (0, \ldots, 0) {\rm~for~} 1 \leq j \leq \ell \}.
\]   
   
\begin{prop} \label{homcomplarge} 
The $\BP_*$-linear map (with tensor products over $\BP_*$)
\begin{eqnarray*} 
    \bigoplus_{\Lambda \in \Lambda_{\delta_1, \ldots, \delta_{\ell}}} N_*^k & \to & 
        N_*^{\delta_1} \otimes L_{\delta_1} \otimes N_*^{\delta_2 - \delta_1} \otimes
    L_{\delta_2} \otimes \cdots \otimes N_*^{\delta_l  - \delta_{l-1}} \otimes L_{\delta_l} \otimes N_*^{k - \delta_l}\\
      (x_{\Lambda}) & \mapsto & \sum_{\Lambda \in  \Lambda_{\delta_1, \ldots, \delta_{\ell}}}
        \Theta_{\Lambda}(x_{\Lambda}) 
\end{eqnarray*} 
is surjective.
\end{prop}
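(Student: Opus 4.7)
The plan is to adapt the proof strategy of Proposition \ref{core}, upgrading the single Vandermonde matrix used there to a Kronecker product of $\ell$ Vandermonde matrices, one per insertion block.

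First, I would apply Proposition \ref{surj} to reduce to singular $\F_p$-homology. Both sides of the map in Proposition \ref{homcomplarge} are, as $\BP_*$-modules, free direct sums of $\prod_{j=1}^{\ell}(p^{\delta_j}-1) = |\Lambda_{\delta_1, \ldots, \delta_\ell}|$ copies of $N_*^k$: the domain by construction, and the codomain after identifying $N_*^{\delta_1} \otimes L_{\delta_1} \otimes \cdots \otimes N_*^{k-\delta_\ell}$ with $N_*^k \otimes_{\BP_*} (L_{\delta_1} \otimes \cdots \otimes L_{\delta_\ell})$. So it suffices to show that the induced $\F_p$-homology map
\[
    \bigoplus_{\Lambda \in \Lambda_{\delta_1, \ldots, \delta_\ell}} H_*^k \to H_*^{\delta_1} \otimes M_{\delta_1} \otimes \cdots \otimes M_{\delta_\ell} \otimes H_*^{k-\delta_\ell}
\]
is surjective.

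By finite-dimensionality in each degree, this is equivalent to injectivity of the cohomology dual. Unwinding $\Theta_\Lambda$ exactly as in the proof of Proposition \ref{homcomp}, the dual sends a basis element $\bigotimes_{i=1}^{k}(s_i t_i^{\mu_i}) \otimes \bigotimes_{j=1}^{\ell} (t^{(j)})^{\nu_j}$ with $1 \leq \nu_j < p^{\delta_j}$ to the tuple whose $\Lambda$-component is
\[
   s_1 \cdots s_k \cdot \prod_{i=1}^{k} t_i^{\mu_i} \cdot \prod_{j=1}^{\ell} (\lambda_{j,1} t_1 + \cdots + \lambda_{j,\delta_j} t_{\delta_j})^{\nu_j} \in \HH^{odd}((\Z/p)^k; \F_p).
\]
Since the variables $t_{\delta_\ell+1}, \ldots, t_k$ appear only as monomial factors outside the $\lambda$-dependent product, linear independence of monomials reduces the problem to showing that the square matrix $M'$ with rows indexed by $\Lambda \in \Lambda_{\delta_1, \ldots, \delta_\ell}$, columns by $\nu \in \prod_j \{1, \ldots, p^{\delta_j}-1\}$, and entries $\prod_j (\sum_m \lambda_{j,m} t_m)^{\nu_j}$ has trivial kernel over $\F_p[t_1, \ldots, t_{\delta_\ell}]$.

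The crucial point is that $M'$ is the Kronecker product $V'^{(1)} \otimes \cdots \otimes V'^{(\ell)}$, where $V'^{(j)}$ is the $(p^{\delta_j}-1) \times (p^{\delta_j}-1)$ submatrix of the full Vandermonde-type matrix $V^{(j)}$ from Proposition \ref{homcomp} (in the variables $t_1, \ldots, t_{\delta_j}$) obtained by deleting the row $\lambda_{j, \cdot} = 0$ and the column $\nu_j = 0$. The deleted row is $(1, 0, \ldots, 0)$, so Laplace expansion gives $\det V^{(j)} = \det V'^{(j)}$, which is nonzero by the Vandermonde formula in Proposition \ref{homcomp}. The standard Kronecker-product determinant formula then yields $\det M' = \prod_j (\det V'^{(j)})^{\prod_{i \neq j}(p^{\delta_i}-1)} \neq 0$ in the integral domain $\F_p[t_1, \ldots, t_{\delta_\ell}]$, so $M'$ has trivial kernel and the dual cohomology map is injective. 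Proposition \ref{surj} then promotes the resulting $\F_p$-surjectivity back to surjectivity over $\BP_*$. I expect the main obstacle to be the correct identification of $M'$ as a Kronecker product of the $V'^{(j)}$'s: this requires careful bookkeeping of the various tensor-factor orderings implicit in the definitions of $\Theta_\Lambda$, the splittings $\Psi_k$, and the target of Proposition \ref{homcomplarge}; once this bookkeeping is in place, the Vandermonde computation from Proposition \ref{homcomp} together with the Kronecker-product determinant formula finish the argument.
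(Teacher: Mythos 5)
Your proof is correct and follows essentially the same route as the paper's (very terse) argument: reduce to singular $\F_p$-homology via Proposition~\ref{surj}, then iterate the Vandermonde computation of Proposition~\ref{homcomp} once per inserted factor $L_{\delta_j}$. The paper simply says "an application of Proposition~\ref{homcomp} separately for each tensor factor $L_{\delta_j}$"; your explicit Kronecker-product determinant calculation, together with the observation that deleting the $\Lambda=0$ row $(1,0,\ldots,0)$ and the $\nu=0$ column leaves the determinant unchanged, is a clean way of making that sentence precise.
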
 

\begin{proof} The corresponding fact in homology follows from an application of  Proposition \ref{homcomp}
separately for each tensor factor $L_{\delta_j}$, $j = 1, \ldots, \ell$. 
Proposition \ref{surj} then implies the claim. 
\end{proof}


\section{Proof of Theorem \ref{magic}} \label{bewmagic} 

We still assume that Theorem \ref{main} 
holds for $n-1$. In the following proposition we apply the map $\Theta_{\Lambda}$ from formula \eqref{defntheta} on page \pageref{seite}
to different choices of $(X_1, \ldots, X_n)$ and collect the results into a direct sum. For the 
target of the following map we refer to the conventions of Theorem \ref{main} (with 
additional restrictions, indicated by a subscript under the direct sum sign). In addition, for each tensor product $J_1 \otimes_{\BP_*} \cdots \otimes_{\BP_*} J_n$ appearing in this theorem, let  
$\sharp_{\CP^{\infty}} (J_1 \otimes \cdots  \otimes J_n)$ be the number of factors equal to some $(L_{\gamma})_*$. 
(Note that the number $k$ in Theorem \ref{main} has a different meaning  than the number $k$ in the 
next Proposition).

\begin{prop} \label{surjalmost} Let  $k, \ell \geq 1$ with $k + \ell = n$ and let $1 \leq \kappa \leq  n-1$. Then the map 
\begin{eqnarray*}
   \bigoplus_{\Lambda \in \Lambda_{\delta_1, \ldots, \delta_{\ell}}}  N_*^k&  \to & 
   \bigoplus_{\sharp_{\CP^{\infty}}(J_1 \otimes \cdots \otimes J_n)= \kappa}  J_1 \otimes \cdots \otimes J_n \\
    (x_{\Lambda}) & \mapsto & \left(  \sum_{\Lambda \in \Lambda_{\delta_1, \ldots, \delta_{\ell}}} \Theta_{\Lambda}(x_{\Lambda}) \right)_{\sharp_{\CP^{\infty}}(J_1 \otimes \cdots \otimes J_n)= \kappa} 
\end{eqnarray*} 
is surjective for $\kappa = \ell$ and is equal to zero for $\kappa> \ell$. 
\end{prop}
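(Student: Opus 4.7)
The plan is to treat the two cases $\kappa > \ell$ and $\kappa = \ell$ separately, drawing on Propositions \ref{vanish} and \ref{homcomplarge} together with Corollary \ref{conseqlex}.

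For $\kappa > \ell$, I would verify that the projection of the map onto each individual target summand $J_1 \otimes_{\BP_*} \cdots \otimes_{\BP_*} J_n$ already vanishes. Given such a summand with $\sharp_{\CP^\infty} = \kappa$, choose $X_i = \CP^{\infty}$ at the $\kappa$ positions where $J_i$ is an $L$-factor and $X_i = B\Z/p$ elsewhere, so that the projection becomes $\sum_{\Lambda} \Theta_{\Lambda}$ followed by the projections $\widetilde{\BP}_*(\CP^{\infty}) \to (L_\gamma)_*$. I would then apply Proposition \ref{vanish} with $\alpha = 1$: since $\omega_1 = \delta_1 + 1 \geq 2$, we have $r(1) = 1$ and hence $n(1) = \ell$, whereas $m(1) = \kappa > \ell$. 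Each $\Theta_{\Lambda}$ then vanishes, and so does the whole component.

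For $\kappa = \ell$, the argument rests on Proposition \ref{homcomplarge} with Corollary \ref{conseqlex} supplying the required triangularity. By Corollary \ref{conseqlex}, the component of the map into the summand indexed by $(j_1, \ldots, j_\ell)$ is zero whenever $(\omega_1, \ldots, \omega_\ell) < (j_1, \ldots, j_\ell)$ in lexicographic order. For the diagonal summand where $(j_s) = (\omega_s)$, the resulting composite coincides (after regrouping tensor factors) with the map considered in Proposition \ref{homcomplarge}, which is surjective. Each remaining nonzero target summand corresponds to a tuple $(j_s)$ with $j_s > s$, and then $(\delta'_s) := (j_s - s)$ is again a valid tuple of $\delta$-data; one reaches that summand surjectively from $\bigoplus_{\Lambda' \in \Lambda_{(\delta')}} N_*^k$ by applying Proposition \ref{homcomplarge} with $(\delta')$ in place of $(\delta)$. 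Collecting these contributions completes the surjection onto the full direct sum.

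I expect the main obstacle to be the bookkeeping that reconciles the source indexing by $(\delta)$ (which governs where the $y_j$'s are inserted, at positions $\omega_s = \delta_s + s$) with the target positions $j_s$ of $\CP^{\infty}$-factors and the corresponding $(L_{j_s - s})_*$-entries from Theorem \ref{main}; in particular one must note that summands containing an $(L_0)_*$-factor are automatically zero, so the relevant target is indexed by tuples satisfying $j_s > s$. Once this matching between $(\delta)$-tuples and nonzero summands is made explicit, the proposition follows from the vanishing supplied by Proposition \ref{vanish} and the single-summand surjectivity supplied by Proposition \ref{homcomplarge}.
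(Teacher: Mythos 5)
Your argument is correct and follows essentially the same route as the paper. For $\kappa > \ell$ you invoke Proposition \ref{vanish} with $\alpha = 1$, which (after unwinding its proof for $\alpha=1$, where $n(1)=\ell$ and $m(1)=\kappa$) reduces to exactly the paper's direct appeal to the non-squeezing Lemma \ref{squeeze} together with freeness of $\widetilde{\BP}_*(\CP^{\infty})$; and for $\kappa = \ell$ you use the same two ingredients as the paper, Corollary \ref{conseqlex} for the ``lower-triangular'' vanishing and Proposition \ref{homcomplarge} for the diagonal surjectivity, ranging over all $\delta$-tuples. The only place to be slightly more explicit is your closing ``collecting these contributions'': as the paper indicates, this is an induction over the tuples $(j_1,\ldots,j_\ell)$ in lexicographic order, starting from the largest and using at each step that the source block with $\omega=j$ surjects onto that summand while hitting no lexicographically larger one.
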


\begin{proof} The case $\kappa = \ell$  follows by  induction on the set of indices $1 \leq j_1 < \cdots < j_{\ell} \leq n$ with $J_{i_i}$ equal to some $ (L_{\gamma})_*$, equipped 
with the lexicographic order, from Corollary \ref{conseqlex}  and Proposition \ref{homcomplarge}.
The case $\kappa > \ell$ follows from  the non-squeezing Lemma  \ref{squeeze} and 
the fact that $\widetilde{\BP}_*(\CP^{\infty})$ is a free $\BP_*$-module.
\end{proof} 

This proposition shows that the map 
\[
  K_* \subset   \widetilde{\BP}_*(\wedge^n \Z/p) \stackrel{\Gamma_n}{\longrightarrow} \bigoplus_{J_1 \otimes \cdots \otimes J_n \neq  N_*^n}  J_1 \otimes \cdots \otimes J_n 
\]
 is surjective.  Hence  $K_*$ satisfies the first property stated in Proposition \ref{zerl}.

It remains to show that $K_* \cap \im \Phi_n = 0$ where $\Phi_n$ is the iterated K\"unneth map. 
 Similarly as in \cite[Proof of Cor. 3.3]{JW} we can reduce this claim to the behavior 
of the toral element in $\BP_*((\Z/p)^n)$. In the next proposition recall that $z_0 \in \widetilde{\BP}_*(\Z/p)$ 
denotes the class $[L^1 \to B \Z/p]$. 

\begin{prop} \label{reduce} Assume that 
\[   
   K_* \cap   \Phi_n( \BP_* \cdot ( z_0 \otimes \cdots \otimes z_0) )  = 0 .
\]
Then $K_* \cap  \im \Phi_n = 0$. 
\end{prop}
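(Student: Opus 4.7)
The plan is to reduce the general statement to the toral case by capping with the Conner-Floyd classes $x_i := \pi_i^*(x) \in \BP^2(B(\Z/p)^n)$, where $\pi_i$ denotes the $i$-th projection $B(\Z/p)^n \to B\Z/p$. Both $K_*$ and $\im \Phi_n$ turn out to be stable under the operators $-\cap x_i$. For $K_*$ this is pure naturality of the cap product: if $\phi : (\Z/p)^k \to (\Z/p)^n$ has $k<n$ and $w \in \BP_*((\Z/p)^k)$, then $\phi_*(w) \cap x_i = \phi_*(w \cap \phi^*(x_i))$ is again in the image of $\phi_*$. For $\im \Phi_n$ the relation $z_m \cap \pi^*(x) = z_{m-1}$ from Section~\ref{back}, together with multiplicativity of the cap product through the iterated K\"unneth map, yields the explicit formula
\[
\Phi_n(z_I) \cap \chi_J = \begin{cases} \Phi_n(z_{I-J}) & \text{if } I \geq J \text{ coordinatewise,} \\ 0 & \text{otherwise,} \end{cases}
\]
where $\chi_J := x_1^{j_1} \cdots x_n^{j_n}$ for a multi-index $J = (j_1,\ldots,j_n) \in \N^n$.

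Given a hypothetical nonzero element $c \in K_* \cap \im \Phi_n$, I would write $c = \Phi_n(w)$ with $w \neq 0 \in N_*^n$ (using injectivity of $\Phi_n$, recorded in the proof of Proposition~\ref{zerl}) and extract a leading term of $w$ via the lexicographic filtration $\mathcal{F}_J(N_*^n)$ from Section~\ref{prelim}. Let $J_0$ be the lex-minimum index with $w \in \mathcal{F}_{J_0}(N_*^n)$. Since the associated graded $E_*(N_*^n)$ is free over $\BP_*/(v_0,\ldots,v_{n-1})$ on the basis $([z_I])$, I can write $w = \beta \cdot z_{J_0} + w'$ with $\beta \in \BP_*$ nonzero modulo $(v_0,\ldots,v_{n-1})$ and $w' \in \mathcal{F}_{<J_0}(N_*^n)$. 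For any $J' <_{\mathrm{lex}} J_0$, the first coordinate of disagreement is strictly smaller in $J'$, so $J' \not\geq J_0$ coordinatewise, and the formula above kills $z_{J'} \cap \chi_{J_0}$. Consequently $w \cap \chi_{J_0} = \beta \cdot (z_0 \otimes \cdots \otimes z_0)$ in $N_*^n$.

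Combining the two stability statements, the element $c \cap \chi_{J_0}$ lies in $K_* \cap \Phi_n\bigl(\BP_* \cdot (z_0 \otimes \cdots \otimes z_0)\bigr)$, which is zero by hypothesis; therefore $\Phi_n(\beta \cdot (z_0 \otimes \cdots \otimes z_0)) = 0$. Injectivity of $\Phi_n$ then forces $\beta \cdot (z_0 \otimes \cdots \otimes z_0) = 0$ in $N_*^n$. But the Conner-Floyd conjecture \cite{RW, Mit} identifies the annihilator of $z_0 \otimes \cdots \otimes z_0$ as precisely the ideal $(p, v_1, \ldots, v_{n-1}) = (v_0, \ldots, v_{n-1})$, contradicting the choice of $\beta$. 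Thus no nonzero such $c$ exists.

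I expect the most delicate point to be the leading-term extraction: one has to verify that the coefficient $\beta$ is well-defined modulo $(v_0,\ldots,v_{n-1})$ (immediate from the graded-free structure of $E_*(N_*^n)$ recorded in Section~\ref{prelim}) and to observe that this same ideal annihilates $z_0 \otimes \cdots \otimes z_0$ by the Conner-Floyd conjecture, so that the nonvanishing of $\beta\cdot(z_0\otimes\cdots\otimes z_0)$ is unambiguous and the two occurrences of the ideal $(v_0,\ldots,v_{n-1})$ match. Everything else amounts to routine bookkeeping of how cap products with the $x_i$ interact with the two submodules $K_*$ and $\im \Phi_n$.
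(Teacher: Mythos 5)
Your proposal is correct and follows essentially the same route as the paper's proof: cap with monomials in the classes $\pi_i^*(x)$ (the paper's $t_i$) to kill lower terms and isolate a leading coefficient times the toral element, observe that $K_*$ and $\im\Phi_n$ are preserved under these caps, then invoke injectivity of $\Phi_n$ together with the Conner--Floyd description of the annihilator of $z_0\otimes\cdots\otimes z_0$ to derive a contradiction. The only cosmetic difference is that you single out the leading term via the lexicographic filtration, whereas the paper picks an index $J$ with $z_J$ of maximal degree; both choices make the ``all other terms die under $\cap\chi_J$'' step go through.
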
 

\begin{proof} 
Assume
\[
   \Phi_n( \sum c_I z_I ) \in K_* 
 \]
where each $c_I  \neq 0 \mod (v_0, \ldots, v_{n-1})$. 
Let  $J$ such that 
the degree of $z_J$ is maximal with nonzero $c_J$. Applying cap 
products with elements $t_i := 1 \otimes \cdots \otimes t \otimes \cdots \otimes 1 \in \BP^{2} ( ( \Z/p)^n)$ 
(unreduced $\BP$-cohomology) where $t =\pi^*(x) \in \BP^2(\Z/p)$ and the subscript $i$ refers to 
the $i$-th tensor factor, we see 
 \[
     \Phi_n( c_J \cdot (z_0 \otimes \cdots \otimes z_0)) \in   K_* .
 \]
 Here we observe that taking cap products with $t_i$ restricts to maps 
 \[
    K_* \to K_{*-2}
\]
by the naturality of the cap product. 

Hence, assuming that $K_* \cap  \Phi_n( \BP_* \cdot ( z_0 \otimes \cdots \otimes z_0) )= 0$, 
we conclude $\Phi_n( c_J \cdot (z_0 \otimes \cdots \otimes z_0)) = 0$, and by the injectivity
of the iterated K\"unneth map $\Phi_n$ \cite[Corollary 3.3]{JW} we conclude $c_J = 0 \mod
(v_0, \ldots, v_{n-1})$. This is a contradiction and hence an element $\sum c_I z_I$ as
above does not exist. 
 
 \end{proof} 
 
The proof that $ K_* \cap \Phi_n( \BP_* \cdot ( z_0 \otimes \cdots \otimes z_0) ) = 0$ 
is based on the following fact, which led to a solution of the Conner-Floyd conjecture. 

\begin{thm}\cite[Theorem 10.3]{RW} Let $p$ be an odd prime. For the canonical element $\iota_n \in 
\BP_n(K(\Z/p,n))$ the annihilator ideal is equal to $(v_0, \ldots, v_{n-1})$. 
\end{thm}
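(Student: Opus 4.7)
The plan is to prove the two inclusions $(v_0,\ldots,v_{n-1}) \subseteq \mathrm{Ann}(\iota_n)$ and $\mathrm{Ann}(\iota_n) \subseteq (v_0,\ldots,v_{n-1})$ separately. Since the invariant prime ideals of $\BP_*$ form the chain $(0) \subsetneq (v_0) \subsetneq (v_0, v_1) \subsetneq \cdots$ (Landweber), once the inclusions in both directions are verified, equality follows automatically.

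For $(v_0,\ldots,v_{n-1}) \subseteq \mathrm{Ann}(\iota_n)$, I would proceed by induction on $n$, exploiting the path-loop fibration $K(\Z/p,n-1) \to PK(\Z/p,n) \to K(\Z/p,n)$ and the transgression sending $\iota_{n-1}$ to $\iota_n$ in BP-homology. The base case $n=1$ is already present in the paper: $\iota_1 = z_0 \in \widetilde{\BP}_1(\Z/p)$, and the formal group relation $\sum_{i} a_i \cdot z_{-i} = 0$ with $a_0 = p = v_0$ degenerates to $v_0 \cdot z_0 = 0$. For the inductive step, one transports relations $v_i \cdot \iota_{n-1} = 0$ (for $i < n-1$) across the transgression, and then extracts the new relation $v_{n-1} \cdot \iota_n = 0$ via a Bockstein-like argument using that $\iota_n$ is $p$-torsion together with the Hopf ring structure on $\BP_*(K(\Z/p,*))$ which lets one multiply known null-classes to produce further annihilating relations.

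For $\mathrm{Ann}(\iota_n) \subseteq (v_0,\ldots,v_{n-1})$, it suffices to exhibit a single $\BP_*$-linear test map under which $\iota_n$ does not become $v_n$-torsion. The natural test is the change-of-rings morphism $\BP_*(K(\Z/p,n)) \to K(n)_*(K(\Z/p,n))$ into Morava $K$-theory, where $v_n$ becomes a unit. The plan is to identify the image of $\iota_n$ with an explicit nonzero generator of $K(n)_n(K(\Z/p,n))$; this then forces $v_n \cdot \iota_n \neq 0$ modulo $(v_0,\ldots,v_{n-1})$, so $v_n \notin \mathrm{Ann}(\iota_n)$ and, by the prime ideal chain, the annihilator cannot be strictly larger than $(v_0,\ldots,v_{n-1})$.

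The hard step is the non-vanishing assertion in the second inclusion: producing a computable class in $K(n)_n(K(\Z/p,n))$ detected by $\iota_n$ requires the Hopf ring computation of Morava $K$-theory for all Eilenberg--MacLane spaces $K(\Z/p,m)$, which proceeds inductively in $m$ using the bar spectral sequence for $K(\Z/p,m-1) \to * \to K(\Z/p,m)$ and a careful analysis of which differentials survive in $K(n)$-theory (they do not collapse unless $m \leq n$). The inclusion $(v_0,\ldots,v_{n-1}) \subseteq \mathrm{Ann}(\iota_n)$ is more formal, but depends on having a clean geometric model for how the $v_i$ act on transgressed classes, which again is most transparently encoded in the Hopf ring structure of $\BP_*(K(\Z/p,*))$ worked out by Ravenel--Wilson.
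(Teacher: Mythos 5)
The paper does not prove this statement; it quotes it verbatim as \cite[Theorem~10.3]{RW} (Ravenel--Wilson) and then builds on it (Corollary~\ref{nutz}, Proposition~\ref{stretch}). There is therefore no ``paper's own proof'' against which to compare your sketch. What you have written is instead a rough precis of the Ravenel--Wilson strategy itself, and at that level it is broadly faithful: the hard direction really does reduce to the bar-spectral-sequence/Hopf-ring computation of $K(n)_*(K(\Z/p,m))$, showing $K(\Z/p,m)$ is $K(n)$-acyclic for $m>n$ and that $\iota_n$ survives nontrivially to $K(n)_n(K(\Z/p,n))$.

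Two places in your sketch need tightening if it is to be more than a gesture at \cite{RW}. First, for the easy inclusion $(v_0,\ldots,v_{n-1})\subseteq\mathrm{Ann}(\iota_n)$, ``a Bockstein-like argument \ldots which lets one multiply known null-classes to produce further annihilating relations'' is not a mathematical step; the actual mechanism (both historically and in \cite{RW}) goes through the circle product in the Hopf ring $\BP_*(K(\Z/p,*))$ together with the relation $[p](x)=\sum a_i x^{i+1}$, and the required identities have to be written down, not just invoked. Second, for the reverse inclusion, merely knowing $v_n\notin\mathrm{Ann}(\iota_n)$ plus ``the prime ideal chain'' does not immediately bound $\mathrm{Ann}(\iota_n)$: the invariant prime ideals of $\BP_*$ form a chain, but $\mathrm{Ann}(\iota_n)$ need not a priori be prime or invariant. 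You need one of two repairs: either observe that $\iota_n$ is the image of $1\in\BP_*$ under the comodule map $\Sigma^n\BP_*\to\BP_*(K(\Z/p,n))$ induced by the bottom cell, so that $\mathrm{Ann}(\iota_n)$ is automatically an invariant ideal and one can argue via its (prime) radical; or note that the Morava $K$-theory detection actually yields $v_n^k\,\iota_n\neq 0$ for \emph{all} $k\geq 0$ (since $v_n$ is a unit in $K(n)_*$), which is the stronger statement needed to pin down the radical. Either version closes the logical gap, but as written the deduction is too quick.
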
 

This  result remains valid for $p=2$, see \cite[Appendix]{JW}. 

\begin{cor} \label{nutz} Let 
\[
     \mu_n : B(\Zp)^n = B\Z/p \times \cdots \times B \Z/p \to K(\Z/p,n) 
\]
be the canonical map induced by the ring structure on the Eilenberg-MacLane spectrum. 

Then the induced map 
\[
  ( \mu_n)_* :   \BP_*((\Z/p)^n) \to \BP_* ( K(\Z/p,n)) 
\]
is injective on $\Phi_n(\BP_* \cdot (z_0 \otimes \cdots \otimes z_0)) \subset   \BP_*((\Z/p)^n)$. 
\end{cor}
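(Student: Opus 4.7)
The plan is to identify the image $(\mu_n)_*(\Phi_n(z_0 \otimes \cdots \otimes z_0))$ with the canonical class $\iota_n \in \BP_n(K(\Z/p, n))$, and then reduce the claimed injectivity to a comparison of annihilator ideals. For the identification, I would use that $B\Z/p$ is a model for $K(\Z/p, 1)$, so that $z_0 \in \widetilde{\BP}_1(B\Z/p)$, represented by the classifying map of $L^1 = S^1$, agrees up to a unit in $\BP_*$ with the fundamental class $\iota_1 \in \widetilde{\BP}_1(K(\Z/p,1))$. The map $\mu_n$ factors as
\[
   B(\Z/p)^n \to (B\Z/p)^{\wedge n} \simeq K(\Z/p, 1)^{\wedge n} \to K(\Z/p, n),
\]
where the last arrow is the $n$-fold multiplication in the Eilenberg-MacLane ring spectrum $H\Z/p$. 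Since $\iota_n$ is by construction the image of $\iota_1 \wedge \cdots \wedge \iota_1$ under this multiplication, and $\Phi_n(z_0 \otimes \cdots \otimes z_0)$ is (up to a unit) this external product of fundamental classes, the image of $\Phi_n(z_0 \otimes \cdots \otimes z_0)$ under $(\mu_n)_*$ is a unit multiple of $\iota_n$.

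With this identification, $(\mu_n)_*$ sends $c \cdot \Phi_n(z_0 \otimes \cdots \otimes z_0)$ to $c \cdot \iota_n$ for every $c \in \BP_*$. By the cited Ravenel--Wilson theorem, $c \cdot \iota_n = 0$ if and only if $c$ lies in $I_n := (v_0, \ldots, v_{n-1})$, so injectivity of $(\mu_n)_*$ on the submodule $\Phi_n(\BP_* \cdot (z_0 \otimes \cdots \otimes z_0))$ is equivalent to showing that $I_n$ annihilates $\Phi_n(z_0 \otimes \cdots \otimes z_0)$ in $\BP_*((\Z/p)^n)$. By the injectivity of the iterated K\"unneth map \cite[Corollary 3.3]{JW}, this in turn reduces to showing $I_n \cdot (z_0 \otimes \cdots \otimes z_0) = 0$ in $N_*^n$.

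This last statement is essentially immediate from \cite[Theorem 3.2]{JW}: the element $z_0 \otimes \cdots \otimes z_0$ sits at the minimal level $(0, \ldots, 0)$ of the lexicographic filtration on $N_*^n$, so $\mathcal{F}_{(0,\ldots,0)}(N_*^n) = \BP_* \cdot (z_0 \otimes \cdots \otimes z_0)$ coincides with its own associated graded piece $E_{(0,\ldots,0)}(N_*^n)$, which by that theorem is a free module of rank one over $\BP_*/I_n$ on the generator $[z_0 \otimes \cdots \otimes z_0]$; hence $I_n \cdot (z_0 \otimes \cdots \otimes z_0) = 0$. The one delicate point is the identification of $(\mu_n)_*(\Phi_n(z_0 \otimes \cdots \otimes z_0))$ with $\iota_n$, which requires carefully matching the generator conventions for $z_0$ via the transfer from $\widetilde{\BP}_*(\CP^\infty)$ with the fundamental class $\iota_1$, but no serious technicality arises because both are distinguished generators of $\BP_1$ of a space modeling $K(\Z/p, 1)$.
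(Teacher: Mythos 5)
Your proposal is correct and is evidently the intended argument; the paper states the corollary without explicit proof, and your chain of reasoning — identifying $(\mu_n)_*\Phi_n(z_0^{\otimes n})$ with the Ravenel--Wilson class $\iota_n$ via the Hopf-ring multiplication, then noting that $(v_0,\ldots,v_{n-1})\cdot z_0^{\otimes n}=0$ in $N_*^n$ since the bottom filtration piece $E_{(0,\ldots,0)}(N_*^n)$ is free of rank one over $\BP_*/(v_0,\ldots,v_{n-1})$ by \cite[Theorem 3.2]{JW} — is the natural derivation. One small remark: the appeal to injectivity of $\Phi_n$ is unnecessary at that step, since the needed implication runs only one way ($c\cdot z_0^{\otimes n}=0$ in $N_*^n$ already forces $c\cdot\Phi_n(z_0^{\otimes n})=0$).
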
 

Writing $\HH^*((\Z/p)^n; \F_p) = \F_p[t_1, \ldots, t_n] \otimes \Lambda(s_1, \ldots, s_n)$ 
note that $\mu_n$ represents the element $s_1 \cdot \ldots \cdot  s_n \in \HH^n((\Z/p)^n; \F_p)$. 

Corollary \ref{nutz} combined with the following proposition finishes  the proof that
\[
   K_* \cap  \Phi_n( \BP_* \cdot ( z_0 \otimes \cdots \otimes z_0) ) = 0.
\]

\begin{prop} \label{stretch} Let $k < n$ and let $\phi : (\Z/p)^k \to (\Z/p)^n$ be a group homomorphism. Then the 
induced map 
\[
    B(\Z/p)^k \stackrel{B\phi}{\longrightarrow}  B(\Z/p)^n \stackrel{\mu_n}{\longrightarrow} K(\Z/p,n) 
\]
is null homotopic. 
\end{prop}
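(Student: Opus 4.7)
The plan is to translate the statement into cohomology and use that the exterior algebra on $k$ generators is concentrated in degrees $\leq k$.

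First I would invoke the classical bijection $[X,K(\Z/p,n)]\cong \HH^n(X;\F_p)$, which sends a map to the pullback of the fundamental class $\iota_n \in \HH^n(K(\Z/p,n);\F_p)$. By the remark after Corollary~\ref{nutz}, the map $\mu_n$ represents
\[
    s_1\cdot s_2\cdots s_n \in \HH^n((\Z/p)^n;\F_p) \subset \F_p[t_1,\ldots,t_n]\otimes \Lambda(s_1,\ldots,s_n),
\]
i.e.\ $\mu_n^*(\iota_n) = s_1\cdots s_n$. So it suffices to show that $(B\phi)^*(s_1\cdots s_n)=0$ in $\HH^n(B(\Z/p)^k;\F_p)$.

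Let $s_1',\ldots,s_k'$ denote the degree-one generators of $\HH^*((\Z/p)^k;\F_p)$. Since $(B\phi)^*$ is a ring homomorphism and each $s_i$ lives in degree one, there exist scalars $a_{ij}\in \F_p$ (the matrix entries of $\phi$) with
\[
    (B\phi)^*(s_i) = \sum_{j=1}^k a_{ij} s_j' \in \Lambda(s_1',\ldots,s_k').
\]
Therefore $(B\phi)^*(s_1\cdots s_n)$ is a product of $n$ elements in the degree-one part of the exterior algebra $\Lambda(s_1',\ldots,s_k')$. Since $p$ is odd (or indeed any prime, thanks to the graded-commutative exterior factor) this exterior algebra vanishes in degrees above $k$. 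As $n>k$ by hypothesis, the product must be zero.

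Thus $(\mu_n \circ B\phi)^*(\iota_n)=0$, which under the bijection above exactly says that $\mu_n \circ B\phi$ is null homotopic. The only potential subtlety is the identification of the homotopy class of $\mu_n$ with $s_1\cdots s_n$, but this is precisely the content of the cohomological description of $\mu_n$ recorded just before the proposition, so the argument is complete.
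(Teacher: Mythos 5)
Your proof is correct and follows essentially the same route as the paper: translate the homotopy class into $\HH^n(B(\Z/p)^k;\F_p)$, identify $\mu_n^*(\iota_n)$ with $s_1\cdots s_n$, observe that $(B\phi)^*$ sends the $s_i$ into the span of the degree-one generators $s_1',\ldots,s_k'$, and use that these generate an exterior algebra that vanishes above degree $k<n$.

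One remark: the parenthetical ``(or indeed any prime, thanks to the graded-commutative exterior factor)'' is wrong, and it is worth stressing why, because it is exactly the point the paper is careful about. For $p=2$ the cohomology $\HH^*((\Z/2)^k;\F_2)=\F_2[s_1',\ldots,s_k']$ is polynomial on degree-one generators; there is no exterior factor, $s_i'^2\neq 0$, and the subalgebra generated by the degree-one classes is nonzero in every degree. So the $n$-fold product need not vanish for $n>k$. This is not a minor technicality: the paper shows at the end of Section~\ref{bewmagic} that $K_*\cap\im\Phi_n\neq 0$ does occur for $p=2$, precisely because the diagonal $\Z/2\to(\Z/2)^3$ produces a nontrivial toral class. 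The paper's own proof flags ``(for $p$ odd!)'' at the corresponding step, and you should too.
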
 

\begin{proof} The given map defines a  class  $c \in \HH^n((\Z/p)^k ; \F_p)$, and it is null homotopic, 
if and only if $c = 0$. We compute 
\[
  c = \phi^*(s_1)  \cup \cdots \cup \phi^*(s_n).
\]
Now
\[
 \HH^1((\Z/p)^k;\F_p) = \bigoplus_{i=1}^k \HH^0(\Z/p;\F_p) \otimes \cdots \otimes \HH^1(\Z/p;\F_p) \otimes \cdots \otimes \HH^0(\Z/p;\F_p) 
\]
where the tensor factor $\HH^1(\Z/p;\F_p)$ sits at position $i$. This and the fact that 
$s_i \cup s_i  = 0 \in \HH^2(\Z/p;\F_p)$ for all $i$ (for $p$ odd!) implies that for any 
$c_1, \ldots, c_n \in \HH^1((\Z/p)^k;\F_p)$ we have $c_1 \cup \cdots \cup c_n = 0$, because 
$n > k$.

This implies $c=0$ and the proof of the proposition is complete. 
\end{proof}  

The proof of Theorem \ref{level} is now rather easy. We already know from Proposition 
\ref{surjalmost} that $\widetilde{\BP}^{(k)}(\wedge^n \Z/p)$ for $1 \leq k \leq n$ span the whole of 
$\widetilde{\BP}_*(\wedge^n \Z/p)$. It remains to show that for fixed $k$ and for 
$0 \leq \kappa \leq n-1$ 
the composition 
\[
       \widetilde{\BP}^{(k)}(\wedge^n \Z/p)  \hookrightarrow \widetilde{\BP}_*(\wedge^n \Z/p) \stackrel{\Gamma_n}{\longrightarrow} 
    \bigoplus_{\sharp_{\CP^{\infty}}(J_1 \otimes \cdots \otimes J_n)= \kappa }  J_1 \otimes \cdots \otimes J_n
\]
is zero, if $\kappa \neq n - k$. For $\kappa > n-k$ this follows from Lemma \ref{squeeze}, compare the proof of Proposition \ref{surjalmost}. If $\kappa < n-k$, we 
argue as follows. Let  $X_i = B\Z/p$ or $X_i = \CP^{\infty}$ for all $1 \leq i \leq n$ with exactly $\kappa$ copies 
of $\CP^{\infty}$. Modulo a permutation of factors this happens for $1 \leq i \leq \kappa$. Now the composition 
\[
   \BP_*(( \Z/p)^k) \stackrel{\phi_*}{\longrightarrow} \BP_*((\Z/p)^n) 
   \to \widetilde{\BP}_* (\wedge^{\kappa} \CP^{\infty}) \otimes_{\BP_*} \widetilde{\BP}_*( \wedge^{n-\kappa} 
   \Z/p) 
\]
intersects the image of 
\[
     \widetilde{\BP_*} (\wedge^{\kappa} \CP^{\infty}) \otimes_{\BP_*} N_*^{n - \kappa} \to 
     \widetilde{\BP_*} (\wedge^{\kappa} \CP^{\infty}) \otimes_{\BP_*} \widetilde{\BP_*}( \wedge^{n-\kappa} 
   \Z/p) 
\]
only in the zero element as $k < n - \kappa$, by an argument similar as for Proposition \ref{stretch}. This implies the claim 
for $\kappa < n-k$, and therefore the proof of Theorem \ref{level} is complete. 

We conclude this section with an example showing that $K_* \cap  \im \Phi_n  \neq 0$ 
can occur for $p =2$.  Let  
\[
  \alpha :  L^3 \to B \Z/2 \to \wedge^3 B\Z/2
\]
be the composition of the classifying map $L^3 \to B\Z/2$ with the diagonal map 
$\Delta: B \Z/2 \to B (\Z/2)^3$ and the canonical projection $B (\Z/2)^3 \to 
\wedge^3 B \Z/2$. 

We have 
\[
   \Delta^*(s_1 \cup s_2 \cup s_3) = s^3 \neq 0  \in \HH^3(\Z/2;\F_2) = \F_2
\]
where we use $\HH^*(\Z/2; \F_2) = \F_2[s]$ with a polynomial generator $s \in \HH^1(\Z/2 ; \F_2) = \F_2$. 

Hence, considering the toral element 
\[
   \beta :  L^1 \times L^1 \times L^1 \to B(\Z/2)^3 \to \wedge^3 B\Z/2,
\]
both of the maps $\alpha$ and $\beta$ induce the canonical map $\Z \to \Z/2$  in the third integral homology. Because 
the forgetful map 
\[
  \widetilde{\BP}_3(\wedge^3 \Z/2) \to   \HH_3( \wedge^3 \Z/2; \Z) = \Z/2
\]
is an isomorphism, this implies that 
\[
    [\alpha] =  [\beta] \in \widetilde{BP}_3(\wedge^3 \Z/2) 
\] 
and the inessential class $[\alpha]$ is equal to the toral class $[\beta]$. 

A similar observation was made in  connection with the Gromov-Lawson-Rosenberg conjecture 
for elementary abelian $2$-groups in \cite{MJ}. 


\section{Gromov-Lawson-Rosenberg conjecture for elementary abelian groups} 
\label{pscm} 

The research leading to this paper was  inspired by \cite[Theorem 5.6]{BR1}, which 
claims that the image of the forgetful map (induced by the homological orientation 
$\BP \to \HH$)
\[
  h :   \BP_*((\Z/p)^n) \to \HH_*((\Z/p)^n)
\]
has the following description: Let $1 \leq k \leq n$ and consider the 
classifying map of a product of lens spaces
\[
     L^{2m_1 +1} \times \cdots \times L^{2m_k+1} \to B\Z/p \times \cdots \times B\Z/p . 
\]
The image of the fundamental class defines an element in $\HH_*((\Z/k)^k)$ that 
can be  mapped to $\HH_*((\Z/p)^n)$ by some group homomorphisms $\phi : (\Z/p)^k 
\to (\Z/p)^n$. It is claimed that the image of $h$  is (in positive degrees) additively generated by elements of this 
special kind.

The homological version of  our Theorem \ref{generated} implies that this is indeed the case. 

However,  the proof given in \cite{BR1} is incorrect. More precisely, at the top of page 204 in {\em loc.\,cit.} it is claimed that each element in  $\Tor_{\BP_*}(N_*^{r-1}, N_*)$, respectively
the image of such an element in $\Tor_{\Z} ( H_*^{r-1}, H_*)$,
can be realized as a sum of matrix Toda brackets
\[
 \left\langle z_{m_1} \otimes \cdots \otimes z_{m_{k-1}} \otimes (z_{m_k } ,z_{m_k-4}, \ldots) \otimes 
 z_{m_k+1} \otimes \cdots \otimes z_{m_{r-1}}, A, \left( \begin{array}{c} \vdots \\
 z_{2j-5} \\ z_{2j-1} \end{array} \right) \right\rangle. 
\]
This amounts to the assertion that the image of the map 
\[
    \Tor_{\BP_*}(N_*^{r-1}, N_*) \to \Tor_{\Z}(H_*^{r-1}, H_*)
\]
is generated by the images of the maps 
\[
    N_*^{k-1} \otimes_{\BP_*} \Tor_{\BP_*}(N_*, N_*) \otimes_{\BP_*} N_*^{r-k-1} \to
     \Tor_{\BP_*}(N_*^{r-1}, N_*) \to \Tor_{\Z}(H_*^{r-1}, H_*)
\]
where $1 \leq k \leq r-1$. 

This statement is wrong. To show  this we use the commutative 
diagram 
\[
   \xymatrix{
    \bigoplus_{1 \leq k \leq r-1}  N_*^{k-1} \otimes_{\BP_*} \Tor_{\BP_*}(N_*, N_*) \otimes_{\BP_*} N_*^{r-k-1} \ar[r] \ar[d] & 
     \Tor_{\BP_*}(N_*^{r-1}, N_*) \ar[d] \\
    \bigoplus_{1 \leq k \leq r-1}   H_*^{k-1} \otimes \Tor_{\Z}(H_*,H_*) \otimes H_*^{r-k-1} \ar[r] & \Tor_{\Z}(H_*^{r-1}, H_*) } .
\]
and recall that according to Corollary 
\ref{wonderful}, for each $1 \leq k \leq r-1$ 
the image of the composition of the right vertical with the upper horizontal 
 map is isomorphic to $H_*^{r-1} \otimes M_{1}$, whereas the image of 
 the right vertical map  is isomorphic to $H_*^{r-1} \otimes M_{r-1}$. Here we 
 recall that $(M_k)_*$ denotes the free graded $\F_p$-module with one generator in each even 
degree $2, \ldots, 2p^{k}-2$.

Now we observe that 
\[
    (r-1) \cdot \dim_{\F_p} (H_*^{r-1} \otimes M_{1}) < \dim_{\F_p} ( H_*^{r-1} \otimes M_{r-1})
\]
in large degrees, if $r -1\geq 2$. From this  we conclude that the lower horizontal map in the above diagram 
cannot contain the whole of $H_*^{r-1} \otimes M_{r-1}$. This shows that a simple reduction to the case $r=2$ as 
envisaged in \cite{BR1} is impossible. A similar problem occurs in the proof 
of the analogous result \cite[Theorem 4.2]{BR2}. 

Because the methods developed in our paper, in particular Theorem \ref{generated},  directly apply to bordism theory 
they can be used to verify the Gromov-Lawson-Rosenberg conjecture for elementary abelian 
groups of odd order right away, avoiding a further reduction to singular homology 
or connective $K$-homology as in \cite{BR1, BR2}, which requires additional arguments \cite{RS} based on 
manifolds with Baas-Sullivan singularities. 

We shall explain this proof. 
In the following we denote by $f : X \to B \pi_1(X)$ the classifying map of the universal 
cover of some path connected topological space $X$ .

\begin{defn} Let $M$ be a closed oriented manifold of dimension $d$ and let $p$ be a prime. 
The manifold $M$ is called  {\em $p$-atoral}, if 
\[
    f^*(c_1) \cup \cdots \cup f^*(c_d) = 0 \in \HH^d(M; \F_p) 
\]
for all one dimensional classes $c_1, \ldots, c_d \in \HH^1(B\pi_1(M) ; \F_p)$.
\end{defn} 

We can now prove the Gromov-Lawson-Rosenberg conjecture for 
atoral manifolds with elementary abelian 
fundamental groups of odd order, compare
\cite[Theorem 5.8]{BR1} and \cite[Theorem 2.3]{BR2}.

\begin{thm} \label{GLR} Assume that $M$ is a $p$-atoral manifold of dimension $d \geq 5$ 
with fundamental group $(\Z/p)^n$, where $p$ is an odd prime. Then the following assertions hold.

\begin{itemize} 
 \item If $M$ admits a spin structure, then $M$ admits a Riemannian metric of positive scalar curvature, if and only if $\alpha(M) = 0 \in {\rm KO}_d$, where $\alpha$ is the index invariant introduced 
 by Hitchin \cite{Hitchin} with values in the coefficients of  real $K$-homology. 
 \item If $M$ does not admit a spin structure, then $M$ admits a Riemannian 
metric of positive scalar curvature. 
\end{itemize} 
\end{thm}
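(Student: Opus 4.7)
The plan is to combine the Gromov--Lawson surgery theorem with the generation result of Theorem \ref{generated}. By the surgery principle of Gromov--Lawson and Rosenberg, for $d \geq 5$ the question of whether $M$ admits a psc metric depends only on the class $[M,f] \in \Omega^{\mathrm{spin}}_d(B(\Z/p)^n)$ in the spin case, or $\Omega^{SO}_d(B(\Z/p)^n)$ in the non-spin case, modulo the subgroup of classes representable by a psc manifold. It thus suffices to exhibit a psc representative in each relevant bordism class.

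Applying Theorem \ref{generated} and its announced spin analogue, I would decompose $[M,f]$ as a sum $\sum \omega_i \cdot [N_i, g_i]$, where each $N_i = L^{2m_{i,1}+1} \times \cdots \times L^{2m_{i,k_i}+1}$ is a generalized product of lens spaces with classifying map factoring through some $\phi_i \colon (\Z/p)^{k_i} \to (\Z/p)^n$, and $\omega_i \in \Omega^{\mathrm{spin}}_*$ (resp.\ $\Omega^{SO}_*$). One can always choose geometric representatives $W_i$ of the coefficients $\omega_i$ that are simply connected and of dimension $\geq 5$; in the oriented case these admit psc by Gromov--Lawson, and in the spin case by Stolz once $\alpha(W_i) = 0$. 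Hence each product $N_i \times W_i$ admits psc by rescaling so that the scalar curvature of the psc factor dominates, except when $N_i$ is a \emph{flat torus} generator, i.e.\ $N_i = (L^1)^{k_i}$ with all $m_{i,j} = 0$.

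The $p$-atoral hypothesis is precisely what rules out these flat-torus contributions: the class of such a generator in $H_d(B(\Z/p)^n; \F_p)$ is detected by a cup product $s_{i_1} \cup \cdots \cup s_{i_d}$ of one-dimensional mod-$p$ classes (this uses $p$ odd so that $s_i^2 = 0$), and $p$-atorality forces this pairing to vanish on $f_*[M]$. In the non-spin case this already produces a psc representative of $[M,f]$, completing the proof. In the spin case, the additional hypothesis $\alpha(M) = 0$ is used together with an induction on the rank $n$: the remaining generators come from proper subgroups $(\Z/p)^{k_i}$ with $k_i < n$ (the inessential part of Definition \ref{inessential}), where the theorem holds by induction, with the base case $n = 1$ being the classical result for $B\Z/p$.

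The main obstacle I expect is the spin analogue of Theorem \ref{generated}. The BP computations in Sections \ref{compute}--\ref{bewmagic} are carried out in oriented bordism and rest on odd-prime facts about $\BP_*((\Z/p)^n)$; lifting the key inessential-versus-K\"unneth decomposition to spin bordism, or equivalently to the connective $\mathrm{ko}$-homology or $BP\langle 1 \rangle$-homology of $B(\Z/p)^n$, requires a parallel analysis of Pontryagin products, the non-squeezing Lemma \ref{squeeze}, and the Conner--Floyd-type annihilator calculation. Once this lift is in place, the rest of the argument is standard surgery-theoretic bookkeeping, and notably avoids the detour through singular homology or connective $K$-homology that caused the difficulties in \cite{BR1, BR2}.
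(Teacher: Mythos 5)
Your overall strategy is essentially the paper's: decompose $[f\colon M \to B(\Z/p)^n]$ via Theorem~\ref{generated} into lens-space contributions plus a coefficient term, use psc on lens-space factors to kill the former, use $p$-atorality to kill the toral summand, and apply Gromov--Lawson/Stolz to the remaining coefficient class. However, you flag as the ``main obstacle'' a parallel development of the BP machinery in spin bordism; this worry is misplaced and indicates a gap. Because $\pi_1(M) = (\Z/p)^n$ has odd order, $M$ is spin iff its universal cover is, and the bordism computation takes place over the $p$-local space $B(\Z/p)^n$ — where $\Omega^{Spin}$ and $\Omega^{SO}$ agree after $p$-localization at an odd prime. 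Thus Theorem~\ref{generated} applies verbatim to both cases, and no $\mathrm{ko}$- or $BP\langle 1\rangle$-analogue is required; the paper simply drops the superscript $SO$/$Spin$ and treats both in parallel.

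Second, the ``induction on rank $n$'' you propose for the spin case is not needed and is not what the paper does. Writing $[f] = L_0 + \cdots + L_n$ with $L_k$ the part coming from $k$ lens factors, one has $L_k = 0$ for $k > d$ by dimension, $L_d = 0$ by $p$-atorality (since those summands are exactly the pushforwards of $(L^1)^{\times d}$ under injective $\phi$), and each of $L_1, \ldots, L_{d-1}$ is handled \emph{uniformly} by rank-one facts: Rosenberg's $\widetilde{\Omega}^{Spin}_{>1}(\Z/p) \subset \Omega^{Spin,+}_*(\Z/p)$ (and its oriented analogue) give psc on lens factors of dimension $>1$, while coefficient manifolds of positive dimension in the remaining all-$L^1$ terms are handled by $\Omega_i = \Omega^+_i$ for $i>0$. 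This yields $\alpha(M) = \alpha(L_0)$ directly, and then Stolz (spin) resp.\ Gromov--Lawson (non-spin) finishes. Your pairing of each $N_i$ with a psc coefficient $W_i$ and the ``flat torus exception'' is pointing in the right direction, but you do not articulate that the only genuinely problematic generator is the pushforward of $(L^1)^{\times d}$ (all other all-$L^1$ products carry a positive-dimensional coefficient factor), and you do not explain how $\alpha(M) = 0$ propagates to the single remaining term $L_0$.
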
 

Note that $M$ in this theorem is automatically $p$-atoral, if $d > n$. This again uses the 
fact that for odd $p$ the one dimensional generator $s \in \HH^1(\Z/p; \F_p)$ squares to $0$. 

\begin{proof} First recall \cite{Hitchin} that $\alpha(M) = 0$, if $M$ is spin and can be 
equipped with a Riemannian metric of positive scalar curvature. Furthermore \cite{RS} 
a closed oriented smooth manifold $M$ of 
dimension $d \geq 5$ admits a Riemannian metric of positive scalar curvature, if and only if 
\begin{itemize} 
   \item $[f :  M \to B\pi_1(M)] \in \Omega^{Spin,+}_d( B \pi_1(M))$, in the case when $M$ admits a spin
            structure, 
   \item $[f : M \to B\pi_1(M)] \in \Omega^{SO,+}_d( B \pi_1(M) )$, in the case when the universal 
             cover $\widetilde M$ 
            does not admit a spin structure. 
\end{itemize} 
The superscript $+$ denotes bordism classes that can be represented by singular manifolds 
$X \to B\pi_1(M)$ (where $X$ is spin, respectively oriented) 
so that $X$ carries a Riemannian metric of positive scalar curvature. 

Now let $M$ be as in the theorem. Because $\pi_1(M) = (\Z/p)^n$ has odd order, the manifold $M$ admits 
a spin structure, if and only if its universal cover $\widetilde M$ does. 
Oriented and spin bordism are equivalent after 
localization at an odd prime $p$. This allows us to treat both cases in parallel to some extent, which 
we indicate by dropping the superscript $SO$ or $Spin$. 


Using Theorem \ref{generated} we can write
\[
    [f  : M \to B(\Z/p)^n] =  L_0  + \cdots +  L_n   \in \Omega_d((\Z/p)^n) 
\]
where each $L_k$ is the sum of bordism classes each of which is equal to 
the image of a class 
\[
  [  X  \times (L^{2m_1+1}\to B\Z/p) \times \cdots \times (L^{2m_k+1} \to B \Z/p) ] \in \Omega_d((\Z/p)^k)
\]
 under the map $\Omega_d((\Z/p)^k) \stackrel{\phi_*}{\longrightarrow} \Omega_d((\Z/p)^n)$ 
 induced by  some group homomorphism $\phi : (\Z/p)^k \to (\Z/p)^n$.   Here $L^{2m_i+1} \to B\Z/p$ are 
 classifying maps of standard lens spaces and $X$ is some conected closed manifold (spin or oriented, 
 respectively). 
In particular $L_0 \in \Omega_d \subset \Omega_d((\Z/p)^n)$. 
Without loss of generality we can assume that each group 
homomorphism $\phi$ is 
 injective.

For dimension reasons $L_k = 0$ for $k > d$. Because  $M$ is $p$-atoral, we also 
have $L_d =0$, because for 
$k = d$ each of the above summands is equal to $[(L^1 \to B\Z/p) \times \cdots \times (L^1 \to B\Z/p)]$, and 
the corresponding $\phi$ is injective. 
Now 
 \begin{itemize} 
    \item $\widetilde{\Omega}_{>1}^{Spin}(\Z/p) \subset \Omega_*^{Spin,+}(\Z/p)$, 
        see \cite[Theorem 1.3]{Ros}, and 
    \item  $\widetilde{\Omega}_{>1} ^{SO}(\Z/p) \subset \Omega_*^{SO,+}(\Z/p)$, 
        because the $\Omega_*^{SO}$-module 
             $\widetilde{\Omega}^{SO}_*(\Z/p)$ is 
              generated by lens spaces $[L^{2m+1} \to B \Z/p]$, $m \geq 0$, and $\Omega^{SO}_i = 
                 \Omega^{SO,+}_i$ for $i > 0$, see \cite{GL}. 
 \end{itemize} 
 This implies that each of the bordism classes  $L_1, \ldots, L_{d-1}$ lies in $\Omega^{+}_d((\Z/p)^n)$. 
 In particular,  in the spin case, we have $ \alpha(M) = \alpha(L_0)$. 
 
Hence our assertion holds, if  $L_0 \in \Omega^{+}_d$, where we assume $\alpha(L_0) = 0$ in the spin case. 
But this follows from  \cite{GL} (in the non-spin case) and 
\cite{Stolz} (in the spin case). 
\end{proof} 
 
The Gromov-Lawson-Rosenberg conjecture for toral manifolds with elementary abelian 
fundamental groups of odd order remains open. We also remark that Sven F\"uhring, in 
his Augsburg dissertation \cite[Corollary 5.2.2]{F}, gave a different proof of the second part of Theorem \ref{GLR}, 
using the notion of Riemannian metrics of positive scalar curvature on manifolds with Baas-Sullivan singularities.

 \end{document}